\newtheorem{remark}[theorem]{Remark}
\numberwithin{equation}{section}
\numberwithin{figure}{section}
\numberwithin{table}{section}
\numberwithin{theorem}{section}
\newcommand{\R}{\mathbb{R}}
\newcommand{\V}{\mathbb{V}}
\newcommand{\foralls}{\forall\,}
\newcommand{\ds}{\,\mathrm{d}s}
\newcommand{\mesh}{\mathcal{T}_h}
\newcommand{\bscp}[1]{\left( #1 \right)}
\newcommand{\bfsigma}{\sigma}
\newcommand{\bfu}{{u}}
\newcommand{\bff}{{f}}
\newcommand{\bfg}{{g}}
\newcommand{\bfv}{{v}}
\newcommand{\bfn}{{n}}
\newcommand{\bfeps}{{\epsilon}}
\newcommand{\bftau}{{{ \tau}}} 
\newcommand{\Oast}{\Omega^{\ast}}
\newcommand{\pO}{\Gamma}
\newcommand{\Fast}{\mathcal{F}_{\Gamma}}
\newcommand{\mcF}{\mathcal{F}}
\newcommand{\mcV}{\mathcal{V}}
\newcommand{\mcC}{\mathcal{C}}
\newcommand{\mcO}{\mathcal{O}}
\newcommand{\tn}{|\mspace{-1mu}|\mspace{-1mu}|}
\newcommand{\nablan}{\partial_{\bfn}}
\newcommand{\restr}[2]{ \left. #1 \right|_{#2}}
\newcommand{\tnorm}[1]{\interleave #1\interleave}
\newcommand{\jump}[1]{\left\llbracket #1\right\rrbracket}
\newcommand{\norm}[1]{\left\| #1\right\|}
\newcommand{\snorm}[1]{\left| #1\right|}
\title{\bf A Stabilized Cut Finite Element Method for the Three Field
Stokes Problem}
\author{Erik Burman\footnotemark[2] \and  Susanne Claus\footnotemark[2] \and Andr\'e Massing\footnotemark[3]}
\begin{document}
\maketitle
\renewcommand{\thefootnote}{\fnsymbol{footnote}}

\footnotetext[2]{Department of Mathematics, University College
  London, Gower Street, London WC1E 6BT. email: e.burman@ucl.ac.uk, susanne.claus@ucl.ac.uk}
\footnotetext[3]{Simula Research Laboratory, P.O. Box 134, 1325
Lysaker. email: massing@simula.no}

\renewcommand{\thefootnote}{\arabic{footnote}}

\begin{abstract}
  \noindent 
  We propose a Nitsche-based fictitious domain method for
  the three field Stokes problem in which the boundary of the domain
  is allowed to cross through the elements of a fixed background mesh.
  The dependent variables of velocity, pressure and extra-stress
  tensor are discretised on the background mesh using linear finite
  elements. This equal order approximation is stabilized using a
  continuous interior penalty (CIP) method.  On the unfitted domain
  boundary, Dirichlet boundary conditions are weakly enforced using
  Nitsche's method. We add CIP-like ghost penalties in the boundary
  region and prove that our scheme is inf-sup stable and that it has
  optimal convergence properties independent of how the domain
  boundary intersects the mesh. Additionally, we demonstrate that the
  condition number of the system matrix is bounded independently of
  the boundary location.  We corroborate our theoretical findings numerically.  
\end{abstract}

\begin{keywords}
  Three field Stokes, continuous interior penalty, fictitious domain,
  cut finite element method, ghost penalty, Nitsche's method,
  viscoelasticity
\end{keywords}

\begin{AMS}
65N12, 65N15, 65N30, 76D07
\end{AMS}

\section{Introduction}
In this article, we develop a stabilized finite element method for the
so-called three field Stokes system \cite{BonvinPicassoStenberg2001,Bonito2006,Bonito2008}. 
In the three field Stokes equation, the
extra-stress tensor is considered as a separate variable, additionally to velocity and pressure. 
This description of the Stokes system is of particular
interest in viscoelastic fluid mechanics, where the extra-stress
tensor is related to the rate of deformation tensor through a
non-linear constitutive equation and therefore can no longer be easily
substituted into the momentum equation \cite{Owens2002}. \\
In particular, we develop a finite element scheme in which the surface of the fluid can cut elements in the computational mesh in an arbitrary manner.
Such cut finite element methods are especially beneficial for
applications in which the use of interface tracking techniques such as
arbitrary Lagrangian Eulerian methods \cite{DoneaALE}, where the mesh is fitted and moved with the interface, involve frequent re-meshing and
sophisticated mesh moving algorithms that can be prohibitively
expensive. One such application of high interest is the simulation of
viscoelastic free surface flows in which the fluid surface undergoes
large deformations and in which drop detachments may occur. 
This type of free surface flow of viscoelastic liquids plays a key
role in a wide range of industrial applications such as surface
coating for molten plastics, filtration operations of engine oils or
inkjet printing. 
\\
\noindent Our cut finite element method is based on an earlier formulation for fitted meshes presented in \cite{Bonito2008,Bonito2006}, where equal order
approximation spaces for all variables are combined with a
continuous interior penalty method to obtain a stable and optimally
convergent method for the three field Stokes equation.
We employ Nitsche's method \cite{Nitsche1971}
to weakly enforce the boundary conditions on the unfitted boundary
domain.  Nitsche-type methods for unfitted interface problems and fictitious domain
methods have previously been developed in
\cite{HansboHansbo2002,HansboHansboLarson2003,DoHa09,HaDo10} for elliptic problems, in
\cite{HaHa04,BeckerBurmanHansbo2009} for elasticity problems and in
\cite{MassingLarsonLoggEtAl2013,MassingLarsonLoggEtAl2013a,BurmanHansbo2013} for Stokes problems.
A particular complication here is the potential occurrence of
elements which are only partially covered by the physical domain
$\Omega$. In such a case it has been demonstrated in
\cite{BurmanHansbo2012,Burman2010} that the sole application of Nitsche's method results in suboptimal schemes, where the discretization error
and the condition number of the discrete system are highly dependent
on the position of the boundary with respect to the mesh.
In our contribution, we therefore apply so-called ghost penalties
\cite{BurmanHansbo2012,Burman2010} in the vicinity of the boundary
to extend the solution of velocity, pressure and extra-stress from
the cut part of the element partially covered by the physical domain to the
whole element in the interface zone resulting in a fictitious domain
approach.  
The ghost penalties consist of penalties on the gradient jumps of
the velocity, pressure and extra-stress and are applied to all edges in the
interface cell layer.
Due to their similarity to the continuous interior penalty terms, our
scheme allows to cope with both inf-sup and fictitious domain related
instabilities in a unified way.
The resulting formulation is weakly consistent and
we prove that our scheme satisfies a uniform inf-sup condition
and exhibits optimal convergence order independent of how the boundary cuts the mesh. 
Numerical experiments demonstrate that the resulting discrete system
is insensitive to the position of the boundary within the
computational domain.

The remainder of this paper is organised as follows.  Firstly, we
briefly review the strong and weak formulation of the three field
Stokes system in Section~\ref{sec:tfstokes}.  In
Section~\ref{sec:fem-formulation}, the novel cut finite element method
for the three field Stokes problem is introduced.
Section~\ref{sec:approximation-properties} summarizes some useful
inequalities and provides certain interpolation estimates which are
necessary to proceed with the stability analysis in
Section~\ref{sec:stability} and to establish the a priori estimates in
Section~\ref{sec:apriori-analysis}. Finally, we present
numerical results corroborating the theoretical findings along with
numerical investigations of the conditioning of the discrete system.
Moreover, we demonstrate the applicability of our discretization method
to complex 3D geometries.

\section{The three field Stokes problem}
\label{sec:tfstokes}
Let $\Omega$ be a bounded domain in $\mathbb{R}^d$ ($d=2$ or $3$) with
a Lipschitz boundary $\Gamma = \partial \Omega$. The three field
Stokes problem reads: Find the extra-stress tensor $\bfsigma: \Omega
\rightarrow
  \mathbb{R}^{d \times d}$, the velocity $\bfu: \Omega \rightarrow
  \mathbb{R}^{d}$ and the pressure $p: \Omega \rightarrow
  \mathbb{R}$ such that
  \begin{equation}
  \left\{
  \begin{aligned}
      \bfsigma - 2 \eta \bfeps(\bfu) &= 0 \mbox{ in } \Omega,
    \\
      - \nabla \cdot \bfsigma + \nabla p &= \bff \mbox{ in } \Omega,
    \\
      \nabla \cdot \bfu &= 0 \mbox{ in } \Omega, 
    \\
      \bfu &= \bfg \mbox{ on } \partial \Omega.
  \end{aligned}
    \label{eq:tfstokes-strong}
    \right.
  \end{equation}
Here, $\bfeps(\bfu) = \frac{1}{2} \left(\nabla \bfu + \nabla
  \bfu^{\top}\right)$ is the rate of deformation tensor,
  $\bff:\Omega \rightarrow \mathbb{R}^d$ is the body force,
  $\eta$ is the fluid viscosity and $ \bfg$ is the Dirichlet boundary
  value.
To be compatible with the divergence constraint
in~\eqref{eq:tfstokes-strong}, the boundary data is supposed to satisfy
$\int_{\partial \Omega} \bfn \cdot \bfg \ds = 0$ where $\bfn$ denotes
the outward pointing boundary normal.
Note that the only difference between the Stokes equation and the
three field Stokes equation is that the extra-stress tensor is kept as
a separate variable. This type of equation system is of particular
interest in viscoelastic fluid mechanics, where the stress tensor
depends on the rate of deformation tensor through a non-linear
constitutive equation. Hence, the extra-stress tensor can no longer be
substituted into the momentum equation. \\ The weak formulation of the
three field Stokes system is obtained by multiplying
\eqref{eq:tfstokes-strong} with test functions $(\bftau,\bfv,p) \in
\left[L^2(\Omega)\right]^{d \times d} \times \left[H^1_0(\Omega)\right]^{d} \times L^2_0(\Omega)$, by integrating over $\Omega$,
and by integrating by parts. The resulting weak formulation reads:
Find $(\bfsigma, \bfu, p) \in \left[L^2(\Omega)\right]^{d \times d }
\times\left[H^1_0(\Omega)\right]^{d} \times
L^2_0(\Omega) $ such that 
\begin{eqnarray}
\frac{1}{2\eta}
\left(\bfsigma, \, \bftau \right)_{\Omega} - \left( \bfeps(\bfu), \,
\bftau \right)_{\Omega} &=& 0 \nonumber,
  \\
\left(\bfsigma, \, \bfeps(\bfv) \right)_{\Omega} - \left(p, \, \nabla
\cdot \bfv \right)_{\Omega} &=& \left(\bff, \, \bfv \right)_{\Omega}
  \nonumber, \\
\left(\nabla \cdot \bfu, \, q \right)_{\Omega} &=&  0 \nonumber
  \\
\label{eq:tfstokes-weak}
\end{eqnarray}
for all $(\bftau, \bfv, q) \in \left[L^2(\Omega)\right]^{d \times d}
\times \left[H^1_g(\Omega)\right]^{d} \times L^2_0(\Omega)$.
Here and throughout this work, we use the notation
$H^s(U)$ and $[H^s(U)]^d$ for the standard Sobolev space of 
order $s \in \R$ and their $\R^d$-valued equivalents defined on the 
(possibly lower-dimensional) domain $U \subseteq \R^d$.
The associated inner products and norms are written as $(\cdot, \cdot)_{s, U}$
and $\| \cdot \|_{s, U}$. If $s = 0$, we usually drop the index $s$ if no
ambiguities occur.
For $s > 1/2$, we let $[H_g^s(\Omega)]^d$ consist of all functions in
$[H(\Omega)]^d$ whose boundary traces are equal to $g$.
Finally, $L^2_0(\Omega)$ denotes the functions in $L^2(\Omega)$ with zero
average over $\Omega$.

\section{Stabilized Cut Finite Element Formulation}
\label{sec:fem-formulation}
Let $\Omega$ be an open and bounded domain in $\R^d$ ($d = 2, 3$) with
Lipschitz boundary $\Gamma = \partial \Omega$ and let $\mesh$ be a
quasi-uniform tesselation that covers the domain $\Omega$.
We do not assume that the mesh $\mesh$ is fitted to the boundary of
$\Omega$, but we require that $T \cap \Omega \neq \emptyset \,, \, 
\foralls T \in \mesh$.
Typically, the mesh $\mesh$ can be thought of as a suitable sub-mesh
of a larger and easy to generate mesh,
	see Figure~\ref{fig:computational-domain}. 
The domain $\Oast$ consisting of the union of
all elements $T \in \mesh$ is called the \emph{fictitious domain}. 
For mesh faces in $\mesh$, i.e. edges
of elements in two dimensions and faces in three
dimensions, we distinguish between \emph{exterior faces}, $\mathcal{F}_e$, 
which are faces that belong to one element only and are thus part of the
boundary $\partial \Oast$ and \emph{interior faces}, $\mathcal{F}_i$, which
are faces that are shared by two elements in $\mesh$. 
\begin{figure}[t!]
  \begin{center}
\subfloat[]{\label{subfig: fictitious domain}
    \includegraphics[width=0.55\textwidth]{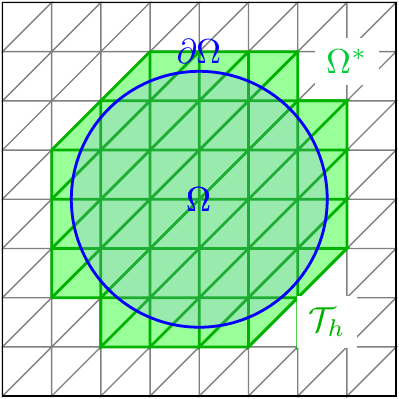}}
\subfloat[]{\label{subfig: face notation}
    \includegraphics[width=0.35\textwidth]{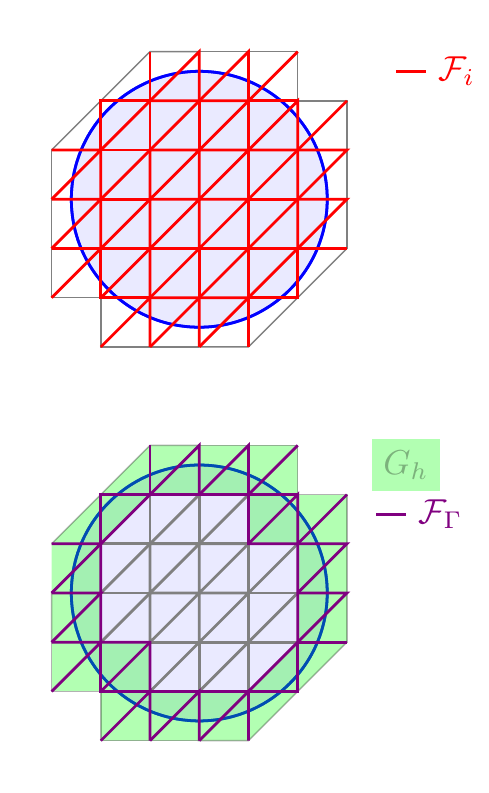}}
  \end{center}
  \caption{Schematics of \protect\subref{subfig: fictitious domain} the computational domain $\Omega$ covered by a fixed and regular background mesh $\mesh$ and the fictitious domain $\Omega^*$ consisting of all elements in $\mesh$ with at least one part in $\Omega$ and  \protect\subref{subfig: face notation} the face notation.}
  \label{fig:computational-domain}
\end{figure}
Next, let $G_h$ be the subset of elements in $\mesh$
that intersect the boundary $\pO$
\begin{equation}
  \label{eq:define-cutting-cell-mesh}
G_h = \{T \in \mesh: T \cap \pO \neq \emptyset \}
\end{equation}
and let us introduce the notation $\mcF_{\Gamma}$ for the set of all interior faces
belonging to elements intersected by the boundary $\pO$
\begin{equation}
  \label{eq:define-cutting-face-mesh}
  \mcF_{\Gamma} = \{ F \in \mcF_i :\;
  T^+_F \cap \pO \neq  \emptyset
  \vee
  T^-_F \cap \pO \neq  \emptyset
  \}.
\end{equation}
Here, $T^+_F$ and $T^-_F$ are the two elements sharing the interior
face $F \in \mathcal{F}_i$. Figure~\ref{subfig: face notation}
illustrates these notations.
To ensure that the boundary $\Gamma$ is reasonably resolved by $\mesh$,
we make the following assumptions:
\begin{itemize}
\item G1: The intersection between $\Gamma$ and a face $F \in
  \mcF_i$ is simply connected; that is, $\Gamma$ does not
  cross an interior face multiple times.
\item G2: For each element $T$ intersected by $\Gamma$, there exists a
  plane $S_T$ and a piecewise smooth parametrization $\Phi: S_T \cap T
  \rightarrow \Gamma \cap T$.
\item G3: We assume that there is an integer $N>0$ such that for each
  element $T \in G_h$ there exists an element $T' \in
  \mesh \setminus G_h$ and at most $N$ elements
  $\{T\}_{i=1}^N$ such that $T_1 = T,\,T_N = T'$ and $T_i \cap T_{i+1}
  \in \mcF_i,\; i = 1,\ldots N-1$.  In other words, the
  number of faces to be crossed in order to ``walk'' from a cut
  element $T$ to a non-cut element $T' \subset \Omega$ is uniformly bounded.
\end{itemize}
Similar assumptions were made in \citep{HansboHansbo2002,BurmanHansbo2012,MassingLarsonLoggEtAl2013a}.
Next, we introduce the continuous linear finite element
spaces
\begin{align}
  \mathcal{V}_h &= \left\{ v_h \in C^0(\Omega): \restr{v_h}{T} \in
    \mathcal{P}_1(T)\, \foralls T \in \mesh \right\}
\end{align}
for the pressure, 
\begin{align}
  \mathcal{V}_h^d &= \left\{\bfv_h \in C^0(\Omega):
    \restr{\bfv_h}{T} \in [\mathcal{P}_1(T)]^d\, \foralls T \in
    \mesh\right\}
\end{align}
for the velocity and 
\begin{align}
  \mathcal{V}_h^{d \times d} &= \left\{ \bfsigma_h \in C^0(\Omega):
    \restr{\mathbf{\bfsigma}_h}{T} \in [\mathcal{P}_1(T)]^{d \times d}\,
    \foralls T \in \mesh \right\}
\end{align}
for the extra-stress tensor. We combine these spaces in the mixed finite element space 
\begin{equation}
  \label{eq:total-approx-space}
  \V_h = \mathcal{V}_h^{d \times d}
  \times \mathcal{V}_h^d \times \mathcal{V}_h.
\end{equation}
When we assume that the computational mesh matches the
domain, the use of equal order approximation spaces for
pressure, velocity and extra-stress tensor does not result in a stable
discretization in the sense of Babu\v{s}ka--Brezzi.
However, a stable discretization can be obtained
by augmenting the variational formulation~\eqref{eq:tfstokes-weak} with suitable stabilization
terms, see for instance \cite{BonvinPicassoStenberg2001},
where Galerkin least-square techniques were employed, 
and \cite{Bonito2008}, where interior penalty operators were
used. 
Remarkably, the same interior penalty techniques have been
proved beneficial in
\cite{BurmanHansbo2010,BurmanHansbo2012,BurmanHansbo2013,MassingLarsonLoggEtAl2013a}
to devise fictitious domain methods which are robust irrespective of
how the boundary cuts the mesh.
In this work, we therefore employ interior penalty operators
to cope with both inf-sup and cut geometry related instabilities
in a unified way.

For a quantity $x$ representing either a scalar, vector or
tensor-valued function on $\Oast$,
the interior penalty operators are defined by
\begin{equation}
  \label{eq:stabilization_operator}
  s_{k}(x,y) = \sum\limits_{F \in \mathcal{F}_i} h^k  
  \left( \jump{\nabla x}_n \jump{\nabla y}_n \right)_F.
\end{equation} 
Here, $\jump{\nabla x}_n$ denotes the normal jump of the quantity $x$
over the face, $F$, defined as $\jump{\nabla x}_n = \left. \nabla x
\right|_{T_F^+} n_F  - \left. \nabla x
\right|_{T_F^-} n_F$, where $n_F$ denotes a unit
normal to the face $F$ with fixed but arbitrary orientation. 
With this notation, the stabilization operators for pressure-velocity
and velocity-extra-stress coupling employed in \cite{Bonito2008} are
then given by
\begin{align}
  \label{eq:stabilization-operator-p}
  s_p(p_h,q_h) &= \frac{\gamma_p}{2 \eta}
  s_{3}(p_h,q_h),
  \\
  \label{eq:stabilization-operator-u}
  s_u(\bfv_h,\bfu_h) &= 2 \eta \gamma_u
  s_{1}(\bfv_h,\bfu_h),
\end{align}
with $\gamma_u$, $\gamma_p$ being positive penalty parameters
to be determined later.
\begin{remark}
  We would like to emphasize that the face-wise contributions in the
  stabilization operator~\eqref{eq:stabilization_operator} are always 
  computed on the \emph{entire} face $F$, even for cut faces where $F
  \cap \overline{\Omega} \not = F$. Following the nomenclature 
  in \cite{Burman2010,BeckerBurmanHansbo2009}, we refer to such penalties evaluated outside the physical
  domain as ghost-penalties. Such ghost-penalties have been proven to
  be crucial in extending classical finite elements with weakly imposed
  Dirichlet boundary conditions to the fictitious domain case.
  In that sense, the stabilization serves two purposes: to stabilize 
  the discretisation scheme when using equal order interpolation
  spaces and to make the scheme insensitive to the boundary position
  with respect to the mesh.
\end{remark}

In addition, we define a stabilization operator for the extra-stress
variable
\begin{equation}
  \label{eq:stabilization-operators}
  s_{\bfsigma}(\bfsigma_h,\bftau_h) = \frac{\gamma_{\sigma}}{2\eta}
  \sum_{F \in \Fast} h^3
  (\jump{\nabla \bfsigma_h}_n, \jump{\nabla \bftau_h}_n)_F,
\end{equation} 
which acts \emph{only} on $\Fast$ as opposed to the stabilization
operators $s_u(\cdot,\cdot)$ and $s_p(\cdot,\cdot)$ which act on
the set $\mcF_i$ of all interior faces.
The sole purpose of the additional stabilization $s_{\sigma}(\cdot,\cdot)$ is to ensure the
stability of the discretization with respect to how the boundary cuts
the mesh. Here, $\gamma_{\sigma}$ denotes a positive penalty
parameter.

We are now in the position to formulate our stabilized cut finite
element method for the three field Stokes problem.
Introducing the bilinear forms
\begin{align}
  \label{eq:a_h-definition}
  a_h\left(\bfsigma_h, \bfv_h \right) &=  \left(\bfsigma_h, \,
\bfeps(\bfv_h) \right)_{\Omega} - 
\bscp{ 
\bfsigma_h \cdot \bfn, \, \bfv_h }_{\partial \Omega}, 
\\
\label{eq:b_h-definition}
b_h\left(p_h, \bfv_h \right) &= - \left(p_h, \, \nabla \cdot
    \bfv_h \right)_{\Omega} + 
    \bscp{p_h \bfn, \, \bfv_h }_{\partial \Omega},
  \end{align}
the proposed discretization scheme reads: Find
$U_h:=\left(\bfsigma_h,\bfu_h,p_h\right) \in \V_h$ such that for all
$V_h:=(\bftau_h,\bfv_h,q_h) \in \V_h$
\begin{align}
    A_h(U_h,V_h) + S_h(U_h,V_h) = L(V_h), 
\label{prob:tfstokes-cutfem}
\end{align}
where 
  \begin{align}
  \label{eq:A_h-definition}
    A_h(U_h,V_h) =& 
    \frac{1}{2\eta}  \left(\bfsigma_h, \, \bftau_h \right)_{\Omega} 
    + \frac{\gamma_b \eta}{h} 
    \bscp{
    u_h,\,v_h}_{\partial \Omega} 
    \nonumber
    + a_h\left(\bfsigma_h,  \bfv_h \right) 
    - a_h\left(\bftau_h, \bfu_h \right) 
    \\
    &\phantom{=}
    + b_h\left(p_h, \bfv_h \right) 
    - b_h\left(q_h, \bfu_h \right), 
    \\
  \label{eq:L_h-definition}
  L_h(V_h) =&  (\bff, \bfv_h)_{\Omega} + 
  \bscp{ \bftau_h \cdot \bfn, \, \bfg
  }_{\partial \Omega} 
  - \bscp{ q_h \bfn, \, \bfg
  }_{\partial \Omega} + \frac{\gamma_b \eta}{h} 
  \bscp{
    \bfg ,\,\bfv_h
  }_{\partial \Omega},
  \end{align}
with the stabilization operator 
  \begin{align}
  \label{eq:S_h-definition}
    S_h(U_h,V_h) =& 
    s_{\bfsigma}(\bfsigma_h, \bftau_h)
    + s_u(\bfu_h,\bfv_h) 
    + s_p(p_h,q_h).
  \end{align}
Here, the positive penalty constant $\gamma_b$ arises from the weak
enforcement of Dirichlet boundary conditions through Nitsche's method
and $h=\max_{T\in \mesh} h_T$ is the mesh size, where $h_T$ denotes
the diameter of $T$.
\begin{remark}
  For the sake of keeping the technical details presented
  in this work at a moderate level, we assume for the following a
  priori error analysis that the contributions from the cut elements
  $\{ T \cap \Omega : T \in G_h\}$ and boundary parts $\{ T \cap
  \Gamma : T \in G_h\}$ can be computed exactly. For a thorough
  treatment of variational crimes arising from the discretization of a
  curved boundary in the context of cut finite element methods,
  we refer to \cite{BurmanHansboLarsonEtAl2014}.
\end{remark}

\section{Approximation properties}
\label{sec:approximation-properties}
In this section, we summarize some useful inequalities and
interpolation estimates which are necessary to proceed with the
stability and a priori analysis in Section~\ref{sec:stability}
and~\ref{sec:apriori-analysis}.

\subsection{Norms}
Recalling the notation from Section~\ref{sec:fem-formulation},
we introduce the triple norm
\begin{align}
  \tn U \tn^2 
  &= \dfrac{1}{2\eta} \| \bfsigma \|^2_{\Omega}
  + 2\eta \| \bfeps(\bfu) \|^2_{\Omega}
  + \dfrac{1}{2\eta} \| p \|^2_{\Omega}
  + 2 \eta \gamma_b \|h^{-1/2} \bfu \|^2_{\Gamma}
  \label{eq:triple-norm}
\end{align}
for the three field Stokes problem~\eqref{prob:tfstokes-cutfem}
 and its discrete counterpart
\begin{align}
  \tn U_h \tn_h^2 
  &= \tn U_h \tn^2  
  + S_h(U_h, U_h)
  \label{eq:triple-norm-disc}
\end{align}
which will be instrumental in 
studying the stability and convergence properties of
problem~\eqref{prob:tfstokes-cutfem} in the following sections.

A key point in the definition of the discrete norm $\tn U_h \tn_h$
is that the inclusion of the stabilization terms
allows to reconstruct natural norms for the discrete function $U_h$
defined on the \emph{entire} fictitious domain $\Oast$.
More specifically, \citet{BurmanHansbo2012,MassingLarsonLoggEtAl2013a} proved the following lemma:
\begin{lemma} 
  \label{lem:control-norm-with-ghost-penalties}
  Let $\Omega$, $\Oast$ and $\Fast$ be defined as in
  Section~\ref{sec:fem-formulation}. Then
  for all $v_h \in \mcV_h$ it holds
  \begin{equation}
  \| v_h \|_{\Oast}^2 
  \lesssim
  \| v_h \|_{\Omega}^2
  + \sum_{F \in  \Fast} h_F^3
  (\jump{\nabla v_h}_n, \jump{\nabla v_h}_n)_F
  \lesssim
  \| v_h \|_{\Oast}^2,
  \label{eq:u-fd-norm-est1}
\end{equation}
\begin{equation}
  \| \nabla v_h \|_{\Oast}^2
  \lesssim
  \| \nabla v_h
  \|_{\Omega}^2
  + \sum_{F \in \Fast} h_F
  (\jump{\nabla v_h}_{n}, \jump{\nabla v_h}_n)_F
  \lesssim
  \| \nabla v_h \|_{\Oast}^2.
  \label{eq:u-fd-norm-est}
\end{equation}
\end{lemma}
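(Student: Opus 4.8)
The plan is to prove the two non-trivial ``$\lesssim$'' bounds that reconstruct the $\Oast$-norms; the reverse bounds are routine. For the reverse inequality in \eqref{eq:u-fd-norm-est1} it suffices to observe that $\Omega\subseteq\Oast$ and that each ghost-penalty face term is controlled by element norms over $\Oast$: a trace inequality for polynomials gives $\norm{\jump{\nabla v_h}_n}_F^2\lesssim h_F^{-1}\norm{\nabla v_h}_{\omega_F}^2$, where $\omega_F=T^+_F\cup T^-_F$, and an inverse inequality then yields $h_F^3\norm{\jump{\nabla v_h}_n}_F^2\lesssim h_F^2\norm{\nabla v_h}_{\omega_F}^2\lesssim\norm{v_h}_{\omega_F}^2$; summing over $F\in\Fast$ with finite overlap gives the claim, and the gradient version \eqref{eq:u-fd-norm-est} is identical without the two extra powers of $h_F$.

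For the forward inequalities I would use a ``walk'' argument based on assumption G3. Fix a cut element $T\in G_h$. By G3 there is a chain $T=T_1,T_2,\dots,T_M=T'$ with $M\le N$, consecutive elements sharing a face $F_i=T_i\cap T_{i+1}\in\mcF_i$, and $T'\in\mesh\setminus G_h$; truncating the chain at the first uncut element we may assume $T_1,\dots,T_{M-1}\in G_h$, so that every chain face satisfies $F_i\in\mcF_\Gamma=\Fast$ (it has a cut neighbour $T_i$), and $T'\subset\Omega$ (since $T'$ is connected, meets $\Omega$, and does not cross $\Gamma$). Writing $v_h^i$ for the polynomial $\restr{v_h}{T_i}$ extended to all of $\R^d$ and telescoping, $v_h^1=v_h^M+\sum_{i=1}^{M-1}(v_h^i-v_h^{i+1})$, so with $M\le N$ fixed, $\norm{v_h}_T^2=\norm{v_h^1}_T^2\lesssim\norm{v_h^M}_T^2+\sum_{i=1}^{M-1}\norm{v_h^i-v_h^{i+1}}_T^2$. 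Since all $T_i$ lie within $N$ face-hops, quasi-uniformity and shape-regularity give for any $\mathcal{P}_1$ function $q$ the scaled norm equivalence $\norm{q}_T^2\lesssim\norm{q}_{T_j}^2$ with constant depending only on $N$ and mesh regularity, hence $\norm{v_h^M}_T^2\lesssim\norm{v_h^M}_{T'}^2=\norm{v_h}_{T'\cap\Omega}^2$. For the increments, the crucial local estimate is that $v_h^i-v_h^{i+1}$ is a linear polynomial whose trace and tangential derivatives on $F_i$ vanish by $C^0$-continuity of $v_h$, while its normal derivative on $F_i$ equals $\pm\jump{\nabla v_h}_n$; scaling on the reference patch then yields $\norm{v_h^i-v_h^{i+1}}_T^2\lesssim h_{F_i}^3\norm{\jump{\nabla v_h}_n}_{F_i}^2$, and likewise $\norm{\nabla(v_h^i-v_h^{i+1})}_T^2\lesssim h_{F_i}\norm{\jump{\nabla v_h}_n}_{F_i}^2$. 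Combining, $\norm{v_h}_T^2\lesssim\norm{v_h}_{T'\cap\Omega}^2+\sum_{i=1}^{M-1}h_{F_i}^3\norm{\jump{\nabla v_h}_n}_{F_i}^2$.

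It then remains to sum over the mesh. Uncut elements contribute directly, $\sum_{T\in\mesh\setminus G_h}\norm{v_h}_T^2=\sum_T\norm{v_h}_{T\cap\Omega}^2\le\norm{v_h}_\Omega^2$, and for cut elements one sums the per-element bound over $T\in G_h$, using that each uncut target $T'$ is reached from at most $C(N)$ cut elements and each face $F\in\Fast$ occurs in at most $C(N)$ chains (both because the number of elements within $N$ hops of a fixed element is uniformly bounded). Adding the two contributions gives $\norm{v_h}_{\Oast}^2=\sum_T\norm{v_h}_T^2\lesssim\norm{v_h}_\Omega^2+\sum_{F\in\Fast}h_F^3\norm{\jump{\nabla v_h}_n}_F^2$, which is \eqref{eq:u-fd-norm-est1}; \eqref{eq:u-fd-norm-est} follows verbatim with $h_F$ replacing $h_F^3$.

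The main obstacle is the bookkeeping in the walk argument: truncating the G3-chains so that every traversed face genuinely lies in $\Fast$, comparing the polynomial extensions $v_h^i$ across non-adjacent elements via the scaled norm equivalence, and controlling the finite overlap so that summing the per-element estimates over all cut elements does not accumulate unboundedly. The per-step estimate itself — that the normal jump $\jump{\nabla v_h}_n$ controls the linear increment $v_h^i-v_h^{i+1}$ on the neighbouring patch — is an elementary scaling computation, as are the trace and inverse inequalities used for the reverse direction.
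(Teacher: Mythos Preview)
The paper does not actually prove this lemma: it simply attributes the result to \cite{BurmanHansbo2012,MassingLarsonLoggEtAl2013a} and states it without argument. Your proposal supplies a complete proof, and it is essentially the standard one from those references: the reverse bounds follow from discrete trace and inverse inequalities, while the forward bounds use the ``walk'' from assumption~G3, telescoping the polynomial extensions along the chain, bounding each increment $v_h^i-v_h^{i+1}$ by the face jump (since it is a linear function vanishing on $F_i$), and summing with finite overlap. The bookkeeping points you flag --- truncating the chain so every traversed face lies in $\Fast$, the scaled norm equivalence for $\mathcal{P}_1$ functions on elements within $N$ hops, and the $C(N)$ bound on how many chains pass through a given face or terminate at a given uncut element --- are exactly the technical ingredients in the cited proofs, and you handle them correctly.
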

\noindent Here and throughout, we use the notation $ a \lesssim b$ for $ a
\leqslant C b$ for some generic constant~$C$ which varies with the
context but is always independent of the mesh size $h$.

\subsection{Useful inequalities}
We recall the following trace inequality (for a proof, see e.g. \cite{DipietroErn2011}) for $v \in H^1(\Oast)$
  \begin{equation}
    \norm{v}_{\partial T} 
    \lesssim
    h_T^{-1/2} \norm{v}_{T} +
    h_T^{1/2} \norm{\nabla v}_{T}
    \quad \foralls T \in \mesh.
    \label{eq:trace-inequality}
  \end{equation}
  If the intersection $\Gamma \cap T$ does not coincide with a boundary edge of the mesh and if the intersection is subject to conditions G1)--G3), then
  the corresponding inequality
  \begin{align}
    \norm{v}_{T \cap \partial \Omega} 
    \lesssim
      h_T^{-1/2} \norm{v}_{T} 
      + h_T^{1/2} \norm{\nabla v}_{T} 
    \label{eq:trace-inequality-unfitted}
  \end{align}
 holds (see \citet{HansboHansbo2002}).
We will also use the following
  well-known inverse estimates for $v_h \in \mcV_h$
  \begin{alignat}{3}
    \label{eq:inverse-estimate-I}
    \norm{\nabla v_h}_{T} 
    & \lesssim
    h_T^{-1} 
    \norm{v_h}_{T} & & \quad \foralls T \in \mesh,
    \\ 
    \label{eq:inverse-estimate-II}
    \norm{\nabla v_h \cdot n}_{\partial T} 
    & \lesssim
    h_T^{-1/2} 
    \norm{\nabla v_h}_{T} & & \quad \foralls T \in \mesh.
  \end{alignat}
 For elements $T$ intersected by the
    boundary $\Gamma$, we have
  \begin{alignat}{3}
    \norm{\nabla v_h \cdot n}_{T \cap \Gamma} 
    & \lesssim
    h_T^{-1/2} \norm{\nabla v_h}_{T} & & \quad \foralls T \in
    \mesh,
    \label{eq:inverse-estimate-III}
  \end{alignat}
  which is proven in \cite{HansboHansbo2002}.
  Finally, we recall the well-known Korn inequalities, stating that
  \begin{gather}
    \| \nabla \bfv \|_{\Omega} 
    \lesssim 
    \| \bfeps(\bfv) \|_{\Omega} \quad \foralls \bfv \in
    [H_0^1(\Omega)]^d,
    \label{eq:korn-I}
    \\
    \| \bfv \|_{1,\Omega} 
    \lesssim 
    \| \bfeps(\bfv) \|_{\Omega} 
    + \| \bfv \|_{\Omega}
    \quad \foralls \bfv \in
    [H^1(\Omega)]^d.
    \label{eq:korn-II}
  \end{gather}
  Since the norm~\eqref{eq:triple-norm} incorporates the boundary data of $u$, it will be
  most convenient to work with the following variant
  of Korn's inequality~\eqref{eq:korn-II}:
  \begin{lemma}
    For $\bfv \in [H^1(\Omega)]^d$, it holds that
    \begin{align}
      \| \bfv \|_{1,\Omega} 
      \lesssim 
      \| \bfeps(\bfv) \|_{\Omega} 
      + \| \bfv \|_{\partial \Omega}.
      \label{eq:korn-III}
    \end{align}
    \label{lem:korn-III}
  \end{lemma}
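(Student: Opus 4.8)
The plan is to deduce this from Korn's second inequality~\eqref{eq:korn-II} by a standard compactness argument, exploiting that the kernel of the symmetric gradient $\bfeps(\cdot)$ on $[H^1(\Omega)]^d$ is the finite-dimensional space of rigid body motions, i.e.\ affine maps $\bfv(\bfx) = \vect{a} + \vect{B}\bfx$ with $\vect{a}\in\R^d$ and $\vect{B}\in\R^{d\times d}$ skew-symmetric. The only geometric property of $\Omega$ needed beyond Rellich compactness is that $\partial\Omega$ affinely spans $\R^d$ (which holds since $\Omega$ is a bounded domain with Lipschitz boundary); consequently the only rigid body motion vanishing on $\partial\Omega$ is the zero map.

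I would argue by contradiction. If the asserted estimate were false, there would exist a sequence $(\bfv_k)\subset[H^1(\Omega)]^d$ with $\|\bfv_k\|_{1,\Omega}=1$ and $\|\bfeps(\bfv_k)\|_{\Omega}+\|\bfv_k\|_{\partial\Omega}\to 0$. By Rellich's theorem a subsequence converges strongly in $[L^2(\Omega)]^d$ to some $\bfv$. Applying~\eqref{eq:korn-II} to the differences $\bfv_k-\bfv_l$,
\begin{align*}
  \|\bfv_k-\bfv_l\|_{1,\Omega}
  \lesssim
  \|\bfeps(\bfv_k)-\bfeps(\bfv_l)\|_{\Omega}+\|\bfv_k-\bfv_l\|_{\Omega},
\end{align*}
and since the right-hand side tends to $0$ the sequence is Cauchy in $[H^1(\Omega)]^d$; hence $\bfv_k\to\bfv$ in $[H^1(\Omega)]^d$ with $\|\bfv\|_{1,\Omega}=1$. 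Passing to the limit, continuity of $\bfeps(\cdot)$ gives $\bfeps(\bfv)=0$, so $\bfv$ is a rigid body motion, while continuity of the trace operator $[H^1(\Omega)]^d\to[L^2(\partial\Omega)]^d$ gives $\bfv|_{\partial\Omega}=0$. By the observation above this forces $\bfv\equiv 0$, contradicting $\|\bfv\|_{1,\Omega}=1$.

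The only genuinely delicate point is the upgrade from $L^2$ to $H^1$ convergence, which is exactly where Korn's \emph{second} inequality~\eqref{eq:korn-II} is indispensable: it cannot be replaced by~\eqref{eq:korn-I}, since the $\bfv_k$ are not assumed to have vanishing trace. The remaining ingredients (Rellich compactness, continuity of traces, $\Kern\bfeps(\cdot)=$ rigid body motions) are classical. Equivalently, one may obtain~\eqref{eq:korn-III} in one step from the Peetre--Tartar lemma applied to the bounded operator $\bfv\mapsto(\bfeps(\bfv),\bfv|_{\partial\Omega})$, which is injective on $[H^1(\Omega)]^d$ by the same kernel argument, together with the compact embedding $[H^1(\Omega)]^d\hookrightarrow[L^2(\Omega)]^d$ and the a~priori bound $\|\bfv\|_{1,\Omega}\lesssim\|\bfeps(\bfv)\|_{\Omega}+\|\bfv\|_{\partial\Omega}+\|\bfv\|_{\Omega}$ supplied by~\eqref{eq:korn-II} and the trace inequality~\eqref{eq:trace-inequality}.
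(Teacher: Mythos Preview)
Your proof is correct and follows essentially the same compactness-by-contradiction argument as the paper's: both extract an $L^2$-convergent subsequence via Rellich, upgrade to $H^1$-convergence by applying Korn's second inequality~\eqref{eq:korn-II} to differences, and derive a contradiction from the limit having vanishing trace and vanishing symmetric gradient. The only cosmetic difference is in the final step: you conclude via the explicit characterization of $\Kern\bfeps$ as rigid body motions, whereas the paper observes that the limit lies in $[H^1_0(\Omega)]^d$ and then invokes Poincar\'e's inequality together with Korn's first inequality~\eqref{eq:korn-I}.
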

  \begin{proof}
    For the sake of completeness, we provide a short proof here,
    which is established by contradiction. Assuming that~\eqref{eq:korn-III}
    does not hold, we can construct a sequence $\{\bfv_n\}_n$ such that
    $\| \bfv_n \|_{1,\Omega} = 1$ and
    \begin{align}
    \| \bfeps(\bfv_n) \|_{\Omega} 
    + \| \bfv_n \|_{\partial \Omega}
    \leqslant \dfrac{1}{n}.
    \label{eq:korn-contradiction}
    \end{align}
    The compact embedding $H^1(\Omega) \subset\subset L^2(\Omega)$ 
    implies that there is a subsequence $\{\bfv_{n'}\}_{n'}$ which converges in
    the $\| \cdot \|_{0, \Omega}$-norm. Since by construction,
    $\| \bfeps(\bfv_n - \bfv_m) \|_{\Omega} \leqslant \tfrac{1}{n} +
    \tfrac{1}{m}$, we conclude using Korn's
    inequality~\eqref{eq:korn-II} that  $\{\bfv_{n'}\}_{n'}$  is also a Cauchy
    sequence in $[H^1(\Omega)]^d$ with $\bfv_{n'}
    \overset{\|\cdot\|_{1,\Omega}}{\to} \bfv'$
    for some $\bfv' \in [H^1(\Omega)]^d$.
    Due to the boundedness of the trace
    operator $T:[H^1(\Omega)]^d \to [L^2(\Omega)]^d$ and ~\eqref{eq:korn-contradiction}, 
    we observe that $\bfv' \in [H^1_0(\Omega)]^d$. 
    Now Poincar\'e's inequality together with Korn's inequality~\eqref{eq:korn-I}--\eqref{eq:korn-II}
    gives the contradiction
    \begin{align*}
      1 = \| \bfv' \|_{1,\Omega} \lesssim \|\bfeps(\bfv')\|_{\Omega}
      + \| \bfv'
      \|_{\Omega} \lesssim \| \nabla \bfv' \|_{\Omega} 
      \lesssim  \|\bfeps(\bfv')\|_{\Omega} = 0.
    \end{align*}
  \end{proof}

\subsection{Interpolation and projection operators}
Before we construct various interpolation operators $L^2(\Omega) \to
\mcV_h$, we recall that for a Lipschitz-domain $\Omega$, an extension
operator
\begin{equation}
  E: H^s({\Omega}) \rightarrow H^s(\Oast)
\end{equation}
can be defined which is bounded
\begin{equation}
  \label{eq:extension-operator-boundedness}
  \norm{E v}_{s,\Oast} \lesssim \norm{v}_{s,\Omega}, \quad s=0,1,2,
\end{equation}
see \cite{Stein1970}
for a proof. Occasionally, we write $v^* = Ev$. 
Then, for any interpolation operator $I_h: H^s(\Oast) \to \mcV_h$,
we can define its ``fictitious domain'' variant 
$I_h^*: H^s(\Omega) \to \mcV_h$ by simply  requiring that
  \begin{equation}
    I_h^*u = I_h(u^*)
  \end{equation}
for $u \in H^s(\Omega)$.
In particular, we will choose $I_h$ to be the Cl\'ement
and Oswald interpolation operators, 
which we denote by $\mcC_h$ and $\mcO_h$,
respectively (see for instance \cite{ErnGuermond2004}).
Recalling that the standard interpolation estimates 
for the Cl\'ement interpolant
\begin{alignat}{3}
\| v - \mcC_h v \|_{r,T} 
& \lesssim
  h^{s-r}| v |_{s,\omega(T)},
&\quad 0\leqslant r \leqslant s \leqslant 2 \quad &\foralls T\in \mesh,
  \label{eq:interpest0}
  \\
\| v - \mcC_h v \|_{r,F} &\lesssim h^{s-r-1/2}| v |_{s,\omega(T)},
&\quad 0\leqslant r \leqslant s \leqslant 2 \quad &\foralls F\in
  \mcF_i 
  \label{eq:interpest1}
\end{alignat}
hold if $v \in H^s(\Oast)$,
we observe that the extended Cl\'ement interpolant $\mcC_h^*$ satisfies
\begin{alignat}{3}
  \| v^* - \mcC_h^* v \|_{r,\Oast} 
  & \lesssim
  h^{s-r}\| v \|_{s,\Omega},
  & &\quad 0\leqslant r \leqslant s \leqslant 2,
  \label{eq:interpest0-ast}
  \\
  \sum_{F\in \mcF} 
  \| v^* - \mcC_h^* v \|_{r,F} 
  &\lesssim h^{s-r-1/2} \| v \|_{s,\Omega},
  & &\quad 0\leqslant r \leqslant s \leqslant 2
  \label{eq:interpest1-ast}
\end{alignat}
due to the boundedness of the extension
operator~\eqref{eq:extension-operator-boundedness}.
Here, $\omega(T)$ is the set of elements in $\mesh$ sharing at least one vertex with $T$ (for \eqref{eq:interpest0}) and sharing at least one vertex with $F \in \mathcal{F}_i$ (for \eqref{eq:interpest1}), respectively. 
The Oswald interpolation operator $\mcO_h: H^2(\mesh) \to \mcV_h$ 
is of particular use in the context of continuous interior penalty
methods, as  it allows to  control the fluctuation $\nabla v_h -
\mcO_h(\nabla v_h)$ in terms of the stabilization
operator~\eqref{eq:stabilization_operator}. 
More precisely, \citet{BurmanFernandezHansbo2006} proved the following lemma: 
\begin{lemma}
  \label{lem:cip-fluctuation-control}
  For all $v_h \in \mcV_h$
  \begin{equation}
  \label{eq:cip-fluctuation-control}
    \norm{h \left( \nabla v_h - \mcO_h\left(\nabla v_h
    \right)\right)}_{\Oast}^2 
    \lesssim
    s_{3}(v_h,v_h).
  \end{equation}
\end{lemma}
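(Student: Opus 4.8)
The plan is to reduce the estimate to a local approximation property of the Oswald (nodal–averaging) operator, which is the technical core, and then to conclude by a scaling argument and a combinatorial counting over vertex patches. Write $w_h := \nabla v_h$, a vector field that is constant on each $T\in\mesh$ (in particular piecewise affine), and recall that $\mcO_h w_h$ is the continuous piecewise linear function whose value at an interior vertex $N$ of $\mesh$ is the arithmetic mean of the element values $w_h|_T(N)$ over the elements $T$ containing $N$, with the obvious modification at boundary vertices. Then at each vertex $N$ of a given element $T$ the difference $(w_h|_T-\mcO_h w_h)(N)$ is a convex combination of differences $(w_h|_T-w_h|_{T'})(N)$ with $T'\ni N$; walking from $T$ to $T'$ through the patch $\omega(T)$ one face at a time, each such difference telescopes into a sum of a bounded number of face jumps $\jump{w_h}$ evaluated at $N$.

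Next I would convert this into an $L^2$ bound on each element. Since $w_h|_T$ and $(\mcO_h w_h)|_T$ are both affine on $T$, their difference is equivalent to its vector of vertex values, and the usual scaling to a reference element together with the quasi–uniformity of $\mesh$ gives, for every $T\in\mesh$,
\[
  \norm{w_h-\mcO_h w_h}_T^2
  \;\lesssim\;
  \sum_{\substack{F\in\mcF_i\\ F\subset\overline{\omega(T)}}}
  h_F\,\norm{\jump{w_h}}_F^2 .
\]
Summing over all $T\in\mesh$ and using that each interior face belongs to only a bounded number of patches $\omega(T)$, one obtains $\norm{w_h-\mcO_h w_h}_{\Oast}^2 \lesssim \sum_{F\in\mcF_i} h_F\,\norm{\jump{w_h}}_F^2$. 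Multiplying by $h^2$ and using $h_F\sim h$ then yields
\[
  \norm{h\bigl(w_h-\mcO_h w_h\bigr)}_{\Oast}^2
  \;\lesssim\;
  \sum_{F\in\mcF_i} h_F^3\,\norm{\jump{w_h}}_F^2 .
\]

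Finally I would identify the right–hand side with $s_3(v_h,v_h)$. Because $v_h$ is globally continuous and piecewise linear, its tangential derivative along any interior face is single–valued, so on each $F\in\mcF_i$ the jump of the full gradient is purely normal, $\jump{\nabla v_h}=\jump{\nabla v_h}_n\,n_F$, and hence $\norm{\jump{w_h}}_F^2=(\jump{\nabla v_h}_n,\jump{\nabla v_h}_n)_F$. Substituting this and $w_h=\nabla v_h$ into the previous display gives exactly the claimed bound $\norm{h(\nabla v_h-\mcO_h(\nabla v_h))}_{\Oast}^2\lesssim s_3(v_h,v_h)$.

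The main obstacle is the first, local estimate for the Oswald operator in terms of face jumps: the telescoping/path argument over vertex patches combined with the reference–element scaling has to be set up carefully, and it is exactly this step that is supplied by \citet{BurmanFernandezHansbo2006}; the remaining steps are the routine summation over elements and the elementary observation about tangential continuity of $\nabla v_h$.
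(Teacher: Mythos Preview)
Your proposal is correct and is precisely the standard argument for this estimate; the paper does not supply its own proof but simply attributes the lemma to \citet{BurmanFernandezHansbo2006}, whose proof is the vertex-patch telescoping and scaling argument you outline. The only minor remark is that the paper's $s_3$ uses the global mesh size $h$ rather than $h_F$, but quasi-uniformity makes this immaterial.
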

To exploit this control given by the stabilization operators
$s_{3}(\cdot, \cdot)$ in the stability analysis of our fictitious
domain method, we define the \emph{stabilized} approximate
$L^2$-projection
\begin{equation}
  \Pi^*_h : L^2(\Omega) \rightarrow \mcV_h
\end{equation}
by
\begin{equation}
  \left(\Pi_h^* u, v_h \right)_{\Omega}
  + s_{3}\left(\Pi_h^* u, v_h \right)
  =
  \left( u, v_h \right)_{\Omega}.
  \label{eq:stab-projection-def}
\end{equation}
We conclude this section by proving certain approximation properties
of the stabilized $L^2$-projection. We start with the following.
\begin{lemma}[$L^2$ stability of $\Pi_h^*$]
  \label{lem:l2-stability}
  For $u\in L^2(\Omega)$ it holds
  \begin{align}
    \norm{\Pi_h^* u}_{\Omega} \leq \norm{
    u}_{\Omega}, \\
    \norm{\Pi_h^* u}_{\Omega^*} \lesssim \norm{
    u}_{\Omega}.
    \label{eq:l2-stability}
  \end{align}
\end{lemma}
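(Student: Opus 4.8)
The plan is to test the defining relation \eqref{eq:stab-projection-def} against the natural choice $v_h = \Pi_h^* u$. This gives
\begin{align}
  \norm{\Pi_h^* u}_{\Omega}^2
  + s_{3}(\Pi_h^* u, \Pi_h^* u)
  = (u, \Pi_h^* u)_{\Omega}.
  \label{eq:l2-stab-test}
\end{align}
Since $s_{3}(\Pi_h^* u, \Pi_h^* u) = \sum_{F\in\mcF_i} h^3 (\jump{\nabla \Pi_h^* u}_n, \jump{\nabla \Pi_h^* u}_n)_F \geqslant 0$, we may drop it and apply Cauchy--Schwarz to the right-hand side:
\begin{align}
  \norm{\Pi_h^* u}_{\Omega}^2
  \leqslant (u, \Pi_h^* u)_{\Omega}
  \leqslant \norm{u}_{\Omega} \norm{\Pi_h^* u}_{\Omega}.
  \label{eq:l2-stab-cs}
\end{align}
Dividing by $\norm{\Pi_h^* u}_{\Omega}$ (the case $\Pi_h^* u = 0$ being trivial) yields the first inequality $\norm{\Pi_h^* u}_{\Omega} \leqslant \norm{u}_{\Omega}$.

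For the second inequality, I would invoke Lemma~\ref{lem:control-norm-with-ghost-penalties}: applying the first estimate of \eqref{eq:u-fd-norm-est1} componentwise to $\Pi_h^* u \in \mcV_h$ gives
\begin{align}
  \norm{\Pi_h^* u}_{\Oast}^2
  \lesssim
  \norm{\Pi_h^* u}_{\Omega}^2
  + \sum_{F\in\Fast} h_F^3 (\jump{\nabla \Pi_h^* u}_n, \jump{\nabla \Pi_h^* u}_n)_F.
  \label{eq:l2-stab-fd}
\end{align}
The sum over $\Fast$ is bounded by the full sum over $\mcF_i$, which is exactly $s_{3}(\Pi_h^* u, \Pi_h^* u)$; and from \eqref{eq:l2-stab-test} together with Cauchy--Schwarz and the first inequality we already proved,
\begin{align}
  s_{3}(\Pi_h^* u, \Pi_h^* u)
  = (u, \Pi_h^* u)_{\Omega} - \norm{\Pi_h^* u}_{\Omega}^2
  \leqslant \norm{u}_{\Omega}\norm{\Pi_h^* u}_{\Omega}
  \leqslant \norm{u}_{\Omega}^2.
  \label{eq:l2-stab-sbound}
\end{align}
Combining \eqref{eq:l2-stab-fd}, the inclusion $\Fast \subseteq \mcF_i$, \eqref{eq:l2-stab-sbound} and $\norm{\Pi_h^* u}_{\Omega} \leqslant \norm{u}_{\Omega}$ yields $\norm{\Pi_h^* u}_{\Oast}^2 \lesssim \norm{u}_{\Omega}^2$, which is the claim.

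I do not anticipate a genuine obstacle here: the argument is the standard energy test for a coercive projection, and the only subtlety is the bookkeeping that the ghost-penalty sum appearing in Lemma~\ref{lem:control-norm-with-ghost-penalties} (over $\Fast$) is controlled by the full interior-penalty sum $s_3(\cdot,\cdot)$ (over $\mcF_i$), which holds since all summands are nonnegative. One should also note that $\Pi_h^* u$ is vector- or tensor-valued when $u$ is (the projection acts componentwise), so Lemma~\ref{lem:control-norm-with-ghost-penalties}, stated for scalar $v_h$, is applied to each component and the estimates summed; this is the only place where one must be slightly careful, but it introduces no new difficulty.
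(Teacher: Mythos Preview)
Your proof is correct and follows essentially the same route as the paper: test the defining relation~\eqref{eq:stab-projection-def} with $v_h = \Pi_h^* u$, drop the nonnegative $s_3$ term and apply Cauchy--Schwarz for the first bound, then invoke Lemma~\ref{lem:control-norm-with-ghost-penalties} together with the identity $\norm{\Pi_h^* u}_{\Omega}^2 + s_3(\Pi_h^* u,\Pi_h^* u) = (u,\Pi_h^* u)_{\Omega}$ for the second. Your additional bookkeeping remarks (the inclusion $\Fast \subseteq \mcF_i$ and the componentwise application of Lemma~\ref{lem:control-norm-with-ghost-penalties}) are correct observations that the paper leaves implicit.
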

\begin{proof}
Using the property 
 \eqref{eq:stab-projection-def}  of the stabilized $L^2$-projection, a Cauchy-Schwarz inequality  and Lemma~\ref{lem:control-norm-with-ghost-penalties}, we obtain
  \begin{align}
    \norm{\Pi_h^* u}_{\Omega}^2 
    &\leq  \norm{ \Pi_h^* u }_{\Omega}^2  + s_3(\Pi_h^* u, \Pi_h^* u) \nonumber \\
    &= \left(u, \Pi_h^* u \right)_{\Omega}  \nonumber \\
    &\leq  \norm{u}_{\Omega}   \norm{\Pi_h^* u}_{\Omega}. 
  \end{align}
  \begin{align}
\norm{\Pi_h^*
  u }_{\Oast}^2 
    &\lesssim  \norm{ \Pi_h^* u }_{\Omega}^2  + s_3(\Pi_h^* u, \Pi_h^* u).
  \end{align}
\end{proof}

\begin{proposition}
  \label{prop:interpolation-estimate-stabilized-projection}
  Assuming a quasi-uniform triangulation, the stabilized projection
  operator $\Pi_h^*$ satisfies the following approximation property for
  $u \in H^s(\Omega)$
  \begin{equation}
    h^{1/2} \norm{u - \Pi_h^* u}_{\partial \Omega} + \norm{u - \Pi_h^*
      u}_{\Omega} + h \norm{\nabla \left(u - \Pi_h^* u \right)}_{\Omega} \leq C
    h^s \snorm{u}_{s}.
    \label{eq:interpolation-estimate-stabilized-projection}
  \end{equation}
\end{proposition}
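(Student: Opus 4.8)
The plan is a standard split through a quasi-interpolant. Write
\[
u - \Pi_h^* u = (u - \mcC_h^* u) + (\mcC_h^* u - \Pi_h^* u) =: \zeta + e_h,
\]
where $\mcC_h^* u$ is the extended Cl\'ement interpolant, $\zeta := u - \mcC_h^* u$ (extended to $\Oast$ by $u^* - \mcC_h^* u$), and $e_h := \mcC_h^* u - \Pi_h^* u \in \mcV_h$. The interpolation error $\zeta$ already contributes the right orders to \eqref{eq:interpolation-estimate-stabilized-projection}: $\norm{\zeta}_{\Omega} + h\norm{\nabla\zeta}_{\Omega} \lesssim h^s\snorm{u}_{s}$ follows from \eqref{eq:interpest0-ast} (with $r=0$ and $r=1$) and the boundedness \eqref{eq:extension-operator-boundedness} of the extension operator, while the boundary term is obtained by applying the unfitted trace inequality \eqref{eq:trace-inequality-unfitted} on every cut element $T \in G_h$, squaring and summing, which gives $\norm{\zeta}_{\partial\Omega}^2 \lesssim h^{-1}\norm{\zeta}_{\Oast}^2 + h\norm{\nabla\zeta}_{\Oast}^2 \lesssim h^{2s-1}\snorm{u}_{s}^2$, i.e. $h^{1/2}\norm{\zeta}_{\partial\Omega} \lesssim h^s\snorm{u}_{s}$.

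For the discrete remainder $e_h$, I would test the defining relation \eqref{eq:stab-projection-def} with $v_h = e_h$, so that $(\Pi_h^* u, e_h)_{\Omega} + s_3(\Pi_h^* u, e_h) = (u, e_h)_{\Omega}$, and hence, since $e_h = \mcC_h^* u - \Pi_h^* u$,
\[
\norm{e_h}_{\Omega}^2 + s_3(e_h, e_h) = (\mcC_h^* u - u, e_h)_{\Omega} + s_3(\mcC_h^* u, e_h).
\]
The first term on the right is at most $\norm{\zeta}_{\Omega}\norm{e_h}_{\Omega} \lesssim h^s\snorm{u}_{s}\norm{e_h}_{\Omega}$. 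For the second, note that for sufficiently regular $u$ the gradient of the exact extension $u^*$ has single-valued traces on interior faces, so $\jump{\nabla \mcC_h^* u}_n = \jump{\nabla(\mcC_h^* u - u^*)}_n$ on each $F \in \mcF_i$; bounding the normal jump by the one-sided gradient traces and invoking the face interpolation estimate \eqref{eq:interpest1-ast} with $r=1$ gives $s_3(\mcC_h^* u, \mcC_h^* u) \lesssim h^3 \sum_{F \in \mcF_i}\norm{\nabla(\mcC_h^* u - u^*)}_F^2 \lesssim h^3 h^{2s-3}\snorm{u}_{s}^2 = h^{2s}\snorm{u}_{s}^2$, whence $s_3(\mcC_h^* u, e_h) \leqslant s_3(\mcC_h^* u, \mcC_h^* u)^{1/2}\, s_3(e_h, e_h)^{1/2} \lesssim h^s\snorm{u}_{s}\, s_3(e_h, e_h)^{1/2}$. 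Inserting the two bounds and absorbing $\norm{e_h}_{\Omega}$ and $s_3(e_h, e_h)^{1/2}$ with Young's inequality yields $\norm{e_h}_{\Omega} + s_3(e_h, e_h)^{1/2} \lesssim h^s\snorm{u}_{s}$.

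It then remains to convert this control of $e_h$ into the three norms appearing in \eqref{eq:interpolation-estimate-stabilized-projection}. Since $e_h \in \mcV_h$, Lemma~\ref{lem:control-norm-with-ghost-penalties} (using $\sum_{F\in\Fast} \leqslant \sum_{F\in\mcF_i}$ and that the weight in \eqref{eq:u-fd-norm-est1} is $h^3$) gives $\norm{e_h}_{\Oast} \lesssim \norm{e_h}_{\Omega} + s_3(e_h, e_h)^{1/2} \lesssim h^s\snorm{u}_{s}$; the inverse estimate \eqref{eq:inverse-estimate-I} together with quasi-uniformity then gives $h\norm{\nabla e_h}_{\Omega} \leqslant h\norm{\nabla e_h}_{\Oast} \lesssim \norm{e_h}_{\Oast}$, and the unfitted trace inequality \eqref{eq:trace-inequality-unfitted} on the cut elements, combined again with \eqref{eq:inverse-estimate-I}, gives $h^{1/2}\norm{e_h}_{\partial\Omega} \lesssim \norm{e_h}_{\Oast}$. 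Adding the $\zeta$- and $e_h$-contributions then yields \eqref{eq:interpolation-estimate-stabilized-projection}.

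I expect the main obstacle to be the bound $s_3(\mcC_h^* u, \mcC_h^* u) \lesssim h^{2s}\snorm{u}_{s}^2$: one has to use that the normal jump of $\nabla u^*$ across interior faces vanishes, so that the interior-penalty term only measures the face-wise interpolation error of $\nabla \mcC_h^* u$, and then balance the penalty weight $h^3$ against the $h^{2s-3}$ coming from the squared $H^1$-type face interpolation estimate. Everything else is routine bookkeeping with the inverse, trace and interpolation inequalities collected in Section~\ref{sec:approximation-properties}.
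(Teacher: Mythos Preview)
Your proposal is correct and follows essentially the same route as the paper: both split through the extended Cl\'ement interpolant, exploit the defining relation~\eqref{eq:stab-projection-def} tested against the discrete remainder $e_h = \mcC_h^* u - \Pi_h^* u$ (the paper's $-\xi_h$), bound the resulting $s_3(\mcC_h^* u,\mcC_h^* u)$ term, and then upgrade the $\Omega$-control of $e_h$ to $\Oast$ via Lemma~\ref{lem:control-norm-with-ghost-penalties} before applying inverse and trace inequalities for the gradient and boundary parts. The only cosmetic difference is the order of operations: the paper first invokes Lemma~\ref{lem:control-norm-with-ghost-penalties} to pass to $\|\xi_h\|_{\Oast}^2$ and then uses the projection identity, whereas you use the projection identity first and pass to $\Oast$ afterwards; and you are more explicit than the paper about why $s_3(\mcC_h^* u,\mcC_h^* u)\lesssim h^{2s}|u|_s^2$, which the paper simply asserts.
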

\begin{proof}
  We begin by writing  equation~\eqref{eq:interpolation-estimate-stabilized-projection} as $I + II +
  III$.  We first proof the $L^2$-error estimate 
  \begin{equation}
   II = \norm{\Pi_h^* u - u}_{\Omega}  \leq C h^s
    \snorm{u}_{H^s(\Omega)}.
  \end{equation}
  Using Lemma~\ref{lem:control-norm-with-ghost-penalties} and the triangle inequality, we obtain
  \begin{align}
     \norm{\Pi_h^*u - u}_{\Omega}  \lesssim  \norm{\Pi_h^*u - u^*}_{\Oast} &\leq
    \| \underbrace{\Pi_h^*u - \mathcal{C}_h^*u}_{\xi_h} \|_{\Oast}
    + \norm{\mathcal{C}_h^*u - u^*}_{\Oast}.
  \end{align}
 Next,  using the approximation properties of the Cl\'ement operator
  $\mcC_h^{\ast}$~\eqref{eq:interpest0-ast}, it is enough to estimate $\xi_h$ as follows
  \begin{align*}
 \norm{\xi_h}_{\Oast}^2 &\lesssim \left( \xi_h,\, \xi_h\right)_{\Omega} + s_3 \left( \xi_h ,\, \xi_h\right) \nonumber \\
    &= \left( u - \mathcal{C}_h^* u ,\, \xi_h\right)_{\Omega} +
    s_3 \left( \mathcal{C}_h^* u, \, \xi_h\right) \nonumber \\
    &\lesssim \norm{u - \mathcal{C}_h^* u}_{\Oast}
    \norm{\xi_h}_{\Omega}
    + s_3 \left( \mathcal{C}_h^* u, \,  \mathcal{C}_h^* u \right)^{1/2}  s_3 \left( \xi_h, \,  \xi_h \right)^{1/2}  \nonumber \\
    &\lesssim \left( \norm{u - \mathcal{C}_h^* u}_{\Oast}
      +  s_3 \left( \mathcal{C}_h^* u, \,  \mathcal{C}_h^* u
    \right)^{1/2} \right) \norm{\xi_h}_{\Oast}, \nonumber 
  \end{align*}
  where in the first and fourth line, we used  Lemma~\ref{lem:control-norm-with-ghost-penalties} and in the second line, we used the property
  ~\eqref{eq:stab-projection-def} to pass from $\Pi_h^* u$ to $u$.
  Consequently, 
\begin{align*}
   \norm{\xi_h}_{\Oast}  &\lesssim h^s \snorm{u^*}_{H^s(\Oast)} 
      \nonumber 
      \\
    &\lesssim h^s \snorm{u}_{H^s(\Omega)}.
\end{align*}
Next, we will prove
  \begin{equation}
   III =     h \norm{\nabla(u - \Pi_h^* u)}_{\Omega} \lesssim   h \norm{\nabla(u^* - \Pi_h^* u)}_{\Omega^*}  \leq C h^s
    \snorm{u}_{H^s(\Omega)}.
  \end{equation}
Using Lemma~\ref{lem:control-norm-with-ghost-penalties}, the
  approximation properties of the Cl\'ement interpolant, the triangle
  inequality, an inverse inequality in combination with the $L^2$-approximation property 
  of the stabilized projection, the third term can be estimated as follows:
  \begin{align*}
  h \norm{\nabla(u^* - \Pi_h^* u)}_{\Omega^*} &\lesssim  h( \norm{\nabla(u^* -
  \mcC_h^{\ast} u)}_{\Omega^*} + \norm{\nabla(\mcC_h^{\ast} u - \Pi_h^*
      u)}_{\Omega^*})
    \\
    & \lesssim  h (\norm{\nabla(u^* - \mcC_h^{\ast} u)}_{\Oast} +
    h^{-1}\norm{\mcC_h^{\ast} u^* - \Pi_h^* u}_{\Oast})
    \\
    & \lesssim  h (\norm{\nabla(u^* - \mcC_h^{\ast} u)}_{\Oast} +
    h^{-1}(\norm{\mcC_h^{\ast} u - u}_{\Oast} + \norm{u^* - \Pi_h^*
      u}_{\Oast} ))
    \\
    & \lesssim  h^s \snorm{u}_{s,\Omega}.
  \end{align*}
  We conclude the proof by bounding the first term via the trace
  inequality~\eqref{eq:trace-inequality-unfitted}
  \begin{align*}
    h^{1/2} \norm{u - \Pi_h^* u}_{\Gamma} \lesssim \norm{ u^* - \Pi_h^*
      u}_{\Oast} + h \norm{\nabla(u^* - \Pi_h^* u)}_{\Oast}
  \end{align*}
  and using the estimates for $II$ and $III$.
\end{proof}
\begin{corollary}[$H^1$ stability of $\Pi_h^*$]
  \label{lem:h1-stability}
  For $u\in H^1(\Omega)$ it holds
  \begin{equation}
    \norm{\nabla \Pi_h^* u}_{\Omega} \lesssim \norm{\nabla
    u}_{\Omega}.
    \label{eq:h1-stability}
  \end{equation}
\end{corollary}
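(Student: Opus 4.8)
The plan is to obtain this simply as a scale-invariant specialisation of Proposition~\ref{prop:interpolation-estimate-stabilized-projection}, evaluated at the regularity exponent $s=1$. Since $u \in H^1(\Omega)$, the seminorm on the right-hand side of~\eqref{eq:interpolation-estimate-stabilized-projection} is $\snorm{u}_{1,\Omega} = \norm{\nabla u}_{\Omega}$, and retaining only the third term on the left gives
\[
  h\,\norm{\nabla(u - \Pi_h^* u)}_{\Omega} \leq C\, h\, \snorm{u}_{1,\Omega} = C\, h\, \norm{\nabla u}_{\Omega}.
\]
Dividing through by $h$ — which is legitimate because the constant in Proposition~\ref{prop:interpolation-estimate-stabilized-projection} is independent of $h$ — yields $\norm{\nabla(u - \Pi_h^* u)}_{\Omega} \lesssim \norm{\nabla u}_{\Omega}$.

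The estimate then follows from the triangle inequality,
\[
  \norm{\nabla \Pi_h^* u}_{\Omega} \leq \norm{\nabla(\Pi_h^* u - u)}_{\Omega} + \norm{\nabla u}_{\Omega} \lesssim \norm{\nabla u}_{\Omega},
\]
which is precisely~\eqref{eq:h1-stability}.

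The only point that deserves a glance is whether the proof of Proposition~\ref{prop:interpolation-estimate-stabilized-projection} is genuinely valid for $s=1$ rather than only $s=2$. It is: each ingredient invoked there — the Cl\'ement estimates~\eqref{eq:interpest0-ast}–\eqref{eq:interpest1-ast} (valid for $0 \leqslant r \leqslant s \leqslant 2$), the boundedness of the extension operator~\eqref{eq:extension-operator-boundedness} (for $s=0,1,2$), Lemma~\ref{lem:control-norm-with-ghost-penalties}, the $L^2$-stability and $L^2$-approximation of $\Pi_h^*$ from Lemma~\ref{lem:l2-stability} and Proposition~\ref{prop:interpolation-estimate-stabilized-projection} itself, and the trace inequality~\eqref{eq:trace-inequality-unfitted} — holds in the required range. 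Hence there is no real obstacle; the corollary is an immediate consequence of the already-established approximation estimate, the seminorm on its right-hand side degenerating gracefully to $\norm{\nabla u}_{\Omega}$ at $s=1$.
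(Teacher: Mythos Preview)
Your proof is correct and follows essentially the same route as the paper: apply Proposition~\ref{prop:interpolation-estimate-stabilized-projection} with $s=1$ to bound $\norm{\nabla(u-\Pi_h^*u)}_{\Omega}$ by $\norm{\nabla u}_{\Omega}$, then conclude by the triangle inequality. Your additional remark verifying that the proposition is indeed valid at $s=1$ is a worthwhile sanity check that the paper leaves implicit.
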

\begin{proof}
  The desired estimate follows from Proposition~\ref{prop:interpolation-estimate-stabilized-projection}: 
\begin{align}
    \norm{\nabla \Pi_h^* u}_{\Omega} &\leq  \norm{\nabla (\Pi_h^* u - u)}_{\Omega} + \norm{\nabla
    u}_{\Omega} \nonumber \\
&\leq C \norm{\nabla
    u}_{\Omega}  + \norm{\nabla
    u}_{\Omega}. 
\end{align}
\end{proof}

\section{Stability estimates}
\label{sec:stability}
In this section, we prove that the stabilized cut finite element
formulation~\eqref{prob:tfstokes-cutfem} for the three field Stokes
problem~\eqref{eq:tfstokes-strong} fulfills an inf-sup condition in the
Babu\v{s}ka--Brezzi sense.
As a first step, we demonstrate that the pressure
stabilization given in~\eqref{eq:stabilization-operators}
allows to formulate a weakened inf-sup condition
for the pressure-velocity coupling when equal-order interpolation
spaces are employed. Similar estimates have previously been 
stated in \cite{BurmanHansbo2006a} for the classical matching mesh case
and in \cite{BurmanHansbo2013,MassingLarsonLoggEtAl2013a}
for a Nitsche-based fictitious domain formulation for the Stokes
problem. Introducing the discrete velocity norm \begin{align}
  \| v_h \|_{1, h}^2 = \| v_h \|_{1, \Oast}^2 + \gamma_b \|h^{-1/2}
  v_h \|_{\Gamma}^2
  \label{eq:discrete-vel-norm}
\end{align}
for $v_h \in [\mcV_h]^d$, we can state the following
\begin{proposition}
  \label{prop:mod-inf-sup-bh}
  Let $p_h \in \mcV_h$, then there is a constant $c > 0$ such that
  \begin{align}
    \sup_{\bfv_h \in \mcV_h^d\setminus \{0\}} \dfrac{b_h(p_h,\bfv_h)}{\| \bfv_h
    \|_{1,h}}
    \gtrsim
    \| p_h \|_{\Omega} -  c s_{3}(p_h,p_h)^{1/2}. 
    \label{eq:mod-inf-sup-bh}
  \end{align}
\end{proposition}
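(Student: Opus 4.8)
The plan is to exploit the classical continuous inf-sup stability of the divergence bilinear form on the fitted domain $\Omega$, and then transfer it to the discrete equal-order setting at the cost of a fluctuation term controlled by $s_3(p_h,p_h)$. More precisely, for a given $p_h \in \mcV_h \subset L^2(\Omega)$ (which, after subtracting its mean, lies in $L^2_0(\Omega)$ up to a harmless constant that integrates against a divergence-free test field), the continuous surjectivity of the divergence operator $\nabla\cdot : [H^1_0(\Omega)]^d \to L^2_0(\Omega)$ furnishes a field $\bfw \in [H^1_0(\Omega)]^d$ with $\nabla\cdot\bfw = -p_h$ in $\Omega$ (or $p_h$ minus its average) and $\|\bfw\|_{1,\Omega} \lesssim \|p_h\|_{\Omega}$. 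This is the standard starting point used in \cite{BurmanHansbo2006a,MassingLarsonLoggEtAl2013a}.

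The key step is then to discretize $\bfw$ using the \emph{stabilized} projection $\Pi_h^*$ component-wise, setting $\bfv_h = \Pi_h^* \bfw \in [\mcV_h]^d$ (note $\bfw$ extends to $\Oast$ so $\Pi_h^*\bfw$ is well defined). Using the definition of $b_h$ in \eqref{eq:b_h-definition}, the boundary term vanishes because $\bfw|_{\partial\Omega}=0$, but $\bfv_h|_{\partial\Omega}$ does not vanish, so one carefully writes
\begin{align*}
  b_h(p_h,\bfv_h)
  &= -(p_h,\nabla\cdot\bfv_h)_{\Omega} + \bscp{p_h\bfn,\bfv_h}_{\partial\Omega}
  \\
  &= (p_h,\nabla\cdot(\bfw - \bfv_h))_{\Omega} + \|p_h\|_{\Omega}^2 + \bscp{p_h\bfn,\bfv_h-\bfw}_{\partial\Omega},
\end{align*}
using $-(p_h,\nabla\cdot\bfw)_\Omega = \|p_h\|_\Omega^2$ (modulo the mean-value subtraction, which only changes $p_h$ by a constant and hence does not affect $(\,\cdot\,,\nabla\cdot\bfv_h)$ against the not-necessarily-solenoidal $\bfv_h$; one absorbs this into the constant $c$). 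The first and third terms are then controlled by Proposition~\ref{prop:interpolation-estimate-stabilized-projection}: integrating by parts the first term gives $-(\nabla p_h, \bfw-\bfv_h)_\Omega + \bscp{p_h\bfn,\bfw-\bfv_h}_{\partial\Omega}$ on each element, and $\|\bfw-\bfv_h\|_{\Omega} + h\|\nabla(\bfw-\bfv_h)\|_{\Omega} + h^{1/2}\|\bfw-\bfv_h\|_{\partial\Omega} \lesssim h\,|\bfw|_{1,\Omega} \lesssim h\|p_h\|_\Omega$; pairing these against $\nabla p_h$ (estimated elementwise via the Oswald operator, since $\nabla p_h - \mcO_h(\nabla p_h)$ is controlled by $s_3(p_h,p_h)^{1/2}/h$ through Lemma~\ref{lem:cip-fluctuation-control}, and the projected part $\mcO_h(\nabla p_h)$ has $L^2$-norm $\lesssim \|p_h\|_\Omega$ only after we also pass through the fluctuation — in fact one writes $(\nabla p_h, \bfw-\bfv_h) = (\nabla p_h - \mcO_h\nabla p_h, \bfw - \bfv_h) + (\mcO_h\nabla p_h, \bfw - \bfv_h)$ and handles the second piece by noting it is $O(h)\cdot$ something bounded, or integrates back by parts). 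After collecting terms one arrives at
\begin{align*}
  b_h(p_h,\bfv_h) \geqslant \|p_h\|_\Omega^2 - C\,s_3(p_h,p_h)^{1/2}\|p_h\|_\Omega .
\end{align*}

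Finally one must bound $\|\bfv_h\|_{1,h}$ from above by $\|p_h\|_\Omega$. By definition \eqref{eq:discrete-vel-norm} this splits into $\|\bfv_h\|_{1,\Oast}$ and $\gamma_b^{1/2}\|h^{-1/2}\bfv_h\|_\Gamma$; the interior part is $\lesssim \|\nabla\Pi_h^*\bfw\|_\Omega + \|\Pi_h^*\bfw\|_\Omega \lesssim \|\nabla\bfw\|_\Omega + \|\bfw\|_\Omega \lesssim \|p_h\|_\Omega$ by the $H^1$- and $L^2$-stability of $\Pi_h^*$ (Corollary~\ref{lem:h1-stability}, Lemma~\ref{lem:l2-stability}) together with Lemma~\ref{lem:control-norm-with-ghost-penalties} to pass from $\Omega$ to $\Oast$; the boundary part is $h^{-1/2}\|\Pi_h^*\bfw\|_\Gamma = h^{-1/2}\|\Pi_h^*\bfw - \bfw\|_\Gamma \lesssim h^{-1/2}\cdot h^{1/2}|\bfw|_{1,\Omega} \lesssim \|p_h\|_\Omega$, again by Proposition~\ref{prop:interpolation-estimate-stabilized-projection} (using $\bfw|_\Gamma = 0$). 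Dividing through by $\|\bfv_h\|_{1,h}$ and taking the supremum yields \eqref{eq:mod-inf-sup-bh}.

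The main obstacle is the bookkeeping around the non-vanishing boundary trace of $\bfv_h = \Pi_h^*\bfw$: unlike in the fitted conforming case, $\bfv_h \notin [H^1_0(\Omega)]^d$, so the naive identity $b_h(p_h,\bfv_h) = -(p_h,\nabla\cdot\bfv_h)_\Omega$ fails and one genuinely needs the $h^{1/2}$-weighted boundary estimate for $\bfw - \Pi_h^*\bfw$ from Proposition~\ref{prop:interpolation-estimate-stabilized-projection} to kill the Nitsche-type boundary term; getting the powers of $h$ to match there, and making sure the fluctuation term $\nabla p_h - \mcO_h(\nabla p_h)$ is the only place where $s_3(p_h,p_h)$ enters (so that the constant $c$ in front of it is clean), is the delicate part of the argument.
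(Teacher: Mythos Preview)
Your overall strategy matches the paper's: pick $\bfw\in[H^1_0(\Omega)]^d$ with $\nabla\cdot\bfw=-p_h$, set $\bfv_h=\Pi_h^*\bfw$, integrate by parts so that $b_h(p_h,\bfv_h)=\|p_h\|_\Omega^2-(\nabla p_h,\bfw-\bfv_h)_\Omega$ (the boundary terms indeed cancel, as you noted), split off the Oswald fluctuation, and finally bound $\|\bfv_h\|_{1,h}\lesssim\|p_h\|_\Omega$. All of that is correct and is exactly what the paper does.

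The gap is your treatment of the remaining piece $(\mcO_h\nabla p_h,\bfw-\bfv_h)_\Omega$. The suggestion ``it is $O(h)\cdot$ something bounded'' does not work: $\|\bfw-\bfv_h\|_\Omega\lesssim h\|p_h\|_\Omega$, but $\|\mcO_h\nabla p_h\|_\Omega$ is only controlled by $\|\nabla p_h\|_{\Oast}\lesssim h^{-1}\|p_h\|_{\Oast}$, so the product is of order $\|p_h\|_\Omega^2$ with a constant you cannot make small---this swallows the leading $\|p_h\|_\Omega^2$ term and the estimate collapses. ``Integrating back by parts'' does not help either, since $\mcO_h\nabla p_h$ is not a gradient. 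The missing idea is the \emph{defining} almost-orthogonality~\eqref{eq:stab-projection-def} of the stabilized projection: because $\mcO_h\nabla p_h\in[\mcV_h]^d$, one has
\[
(\mcO_h\nabla p_h,\bfw-\Pi_h^*\bfw)_\Omega \;=\; -\,s_3\bigl(\mcO_h\nabla p_h,\Pi_h^*\bfw\bigr),
\]
and the paper then shows $|s_3(\mcO_h\nabla p_h,\Pi_h^*\bfw)|\lesssim s_3(p_h,p_h)^{1/2}\|\bfv_h\|_{1,\Oast}$ via $s_5(\mcO_h\nabla p_h,\mcO_h\nabla p_h)=s_5(\mcO_h\nabla p_h-\nabla p_h,\mcO_h\nabla p_h-\nabla p_h)\lesssim s_3(p_h,p_h)$ (using that $\nabla p_h$ is piecewise constant). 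This is precisely why $\Pi_h^*$---and not, say, $\mcC_h^*$---is the right interpolant here; you chose it but did not exploit its key property.
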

\begin{proof}
Due to the surjectivity of the divergence operator $\nabla \cdot
  : [H_0^1(\Omega)]^d \to L^2(\Omega)$, there exists a $v^p \in
  [H_0^1(\Omega)]^d$ such that $ \nabla \cdot v^p = p_h$ and $ \| v^p
  \|_{1,\Omega} \lesssim \| p_h \|_{\Omega}$. Setting $v_h =
  \Pi^{\ast}_h v^p$ and using the $H^1$-stability of the
  stabilized $L^2$-projection stated in Lemma \ref{lem:h1-stability},
  we thus obtain
  \begin{align}
    \nonumber
    b_h(p_h, v_h) 
    &= b_h(p_h, v^p) + b_h(p_h, \Pi_h^{\ast} v^p - v^p) 
    \\
    &\gtrsim \| p_h \|_{\Omega} \| v^p \|_{1,\Omega} + 
    b_h(p_h, \Pi_h^{\ast} v^p - v^p).
    \label{eq:b_h-split}
  \end{align}
  Next, we estimate the remaining term in~\eqref{eq:b_h-split}.
  Recalling definition~\eqref{eq:b_h-definition} of $b_h(\cdot, \cdot)$ and integrating
  by parts gives
  \begin{align}
    b_h(p_h, \Pi_h^{\ast} v^p - v^p)
    &= \left( \nabla p_h, \Pi^{\ast}_h v^p - v^p \right)_{\Omega}.
    \label{eq:b_h-ibp}
  \end{align}
  We now exploit the (almost) orthogonality of the stabilized
  $L^2$-projection $\Pi_h^{\ast} : [H^1_0(\Omega)]^d \to \mcV^{d}$  by
  inserting $\mcO_h(\nabla p_h) \in \mcV^{d}$ into~\eqref{eq:b_h-ibp},
  yielding
  \begin{align}
    \nonumber
    b_h(p_h, \Pi_h^{\ast} v^p - v^p)
    &= \left( \nabla p_h - \mcO_h(\nabla p_h), \Pi^{\ast}_h v^p - v^p \right)_{\Omega}
    - s_{3}(\mcO_h(\nabla p_h),  \Pi^{\ast}_h v^p) = I + II.
  \end{align}
  Combining Lemma~\ref{lem:cip-fluctuation-control} with the stability and approximation
  properties of $\Pi_h^{\ast}$, cf.~\eqref{eq:h1-stability} and
  \eqref{eq:interpolation-estimate-stabilized-projection}, 
  the first term can be bounded as
  follows:
  \begin{align}
    I
    &\gtrsim 
    -s_{3}(p_h, p_h)^{1/2} \|h^{-1} (\Pi^{\ast}_h v^p - v^p)
    \|_{\Omega}
    \gtrsim 
    - s_{3}(p_h, p_h)^{1/2} \| v^p \|_{1, \Omega}
    \label{eq:b_h-I-estimate}
  \end{align}
  To estimate $II$, recall the definition of $\bfv_h$ and apply
  successively a Cauchy-Schwarz inequality and
  Lemma~\ref{lem:control-norm-with-ghost-penalties}
  to obtain
  \begin{align}
    \nonumber
   II
   \gtrsim - s_{1}(v_h,v_h)^{1/2} s_{5}\left(\mcO_h(\nabla
   p_h),\mcO_h(\nabla p_h)\right)^{1/2}
   &\gtrsim 
   - \| v_h \|_{1,\Oast} 
   s_{5}\left(\mcO_h(\nabla p_h),\mcO_h(\nabla p_h)\right)^{1/2}.
  \end{align}
  Using successively the discrete trace
  inequality~\eqref{eq:inverse-estimate-II}, 
  the inverse inequality~\eqref{eq:inverse-estimate-I} and
  Lemma~\ref{lem:cip-fluctuation-control}, 
  the last term can be bounded in the following way:
  \begin{align}
    \nonumber
   s_{5}\left(\mcO_h(\nabla p_h),\mcO_h(\nabla p_h)\right)
   &= 
   s_{5}\left(\mcO_h(\nabla p_h) - \nabla p_h,
   \mcO_h(\nabla p_h) - \nabla p_h \right)
   \\
    \nonumber
   & \lesssim
   \sum_{T \in \mesh} h^4 \| \nabla (\mcO_h(\nabla p_h) - \nabla p_h )
   \|_T
   \\
   \label{eq:b_h-inconsistency-estimate}
   & \lesssim
   \sum_{T \in \mesh} h^2 \| \mcO_h(\nabla p_h) - \nabla p_h \|_T
   \lesssim
   s_{3}\left(p_h, p_h \right).
  \end{align}
  Consequently, 
  \begin{align}
  II \gtrsim - \| v_h \|_{1,\Oast} s_{3}(p_h,p_h)^{1/2}.
   \label{eq:b_h-II-estimate}
  \end{align}
  Combining ~\eqref{eq:b_h-ibp}, \eqref{eq:b_h-I-estimate}
  and~\eqref{eq:b_h-II-estimate}, we find that for some constants
  $c_1$ and $c_2$
  \begin{align}
    b_h(p_h, v_h) 
    \gtrsim \left(
      \|p_h\|_{\Omega} 
      - c_1 s_{3}(p_h,p_h)^{1/2}
    \right) \| v^p \|_{1,\Omega} 
    - c_2 s_{3}(p_h,p_h)^{1/2} \| v_h \|_{1,\Oast}. 
    \label{eq:b_h-estimate-final}
  \end{align}
  To conclude the proof, we note that since $v_p \in [H^1_0(\Omega)]^d$, we have
  \begin{align*}
    \|v_h \|_{1, h}^2 
    &= \| v_h \|_{1, \Oast}^2 + h^{-1}\| v_h - v_p \|_{\Gamma}^2
    \lesssim
    \| v_p \|_{1, \Omega}^2,
  \end{align*}
  thanks to the stability~\eqref{eq:l2-stability} of the operator
  $\Pi^{\ast}_h$,  the interpolation
  estimate~\eqref{eq:interpolation-estimate-stabilized-projection} 
  applied for $s = 1$ and our choice of $v_h$. As a result,
  \begin{align}
    \dfrac{b_h(p_h, v_h)}
    {\| v_h \|_{1,h}}
    \gtrsim 
    \dfrac{b_h(p_h, v_h)}
    {\| v^p \|_{1,\Omega}}
    \gtrsim 
      \|p_h\|_{\Omega} 
      - c s_{3}(p_h,p_h)^{1/2},
      \label{eq:b_h-estimate-final-II}
  \end{align}
  if $b_h(p_h, v_h) \geqslant 0$, 
  otherwise we can simply use $\widetilde{v}_h = - v_h$ 
  to arrive at~\eqref{eq:b_h-estimate-final-II} with $v_h$
  replaced by $\widetilde{v}_h$.
\end{proof}

As a second step, we state and prove a  
weakened inf-sup condition
for the coupling between the velocity and the extra-stress.
Here, the ``defect'' of the inf-sup condition is quantified
in terms of the velocity stabilization form
in~\eqref{eq:stabilization_operator} and 
the boundary penalization in~\eqref{eq:A_h-definition}.
\begin{proposition}
    \label{prop:mod-inf-sup-ah}
  Let $\bfu_h \in \mcV_h^d$, then there is a constant $c > 0$ such that
  \begin{align}
    \sup_{\bftau_h \in \mcV_h^{d\times d}\setminus \{0\}}
    \dfrac{a_h(\bftau_h,\,\bfu_h)}{\| \bftau_h \|_{\Oast}}
     \gtrsim 
    \| \bfu_h \|_{1,\Omega} -  c 
    \left( s_{1}(\bfu_h,\bfu_h)^{1/2} +
    \bscp{h^{-1} \bfu_h, \bfu_h}^{1/2}_{\Gamma}
    \right).
    \label{eq:mod-inf-sup-ah}
  \end{align}
\end{proposition}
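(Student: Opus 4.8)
The plan is to mimic the construction used for $b_h$ in Proposition~\ref{prop:mod-inf-sup-bh}, but now exploiting the structure of $a_h$. Recall
\[
a_h(\bftau_h, \bfu_h) = (\bftau_h, \bfeps(\bfu_h))_{\Omega} - (\bftau_h \cdot \bfn, \bfu_h)_{\partial\Omega}.
\]
The natural choice to expose $\|\bfu_h\|_{1,\Omega}$ is to pick $\bftau_h$ close to $\bfeps(\bfu_h)$, since then the first term produces $\|\bfeps(\bfu_h)\|_\Omega^2$, which controls $\|\bfu_h\|_{1,\Omega}$ via the Korn inequality~\eqref{eq:korn-III} (the variant tailored to boundary data, which is exactly why Lemma~\ref{lem:korn-III} was proved). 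Since $\bfeps(\bfu_h)$ is only elementwise polynomial and not in $\mcV_h^{d\times d}$, I would instead set $\bftau_h = \mcO_h(\bfeps(\bfu_h))$, the Oswald (quasi-)interpolant into the continuous tensor space. Then write $\bftau_h = \bfeps(\bfu_h) + (\mcO_h(\bfeps(\bfu_h)) - \bfeps(\bfu_h))$ and split $a_h(\bftau_h,\bfu_h)$ accordingly into a ``main'' part and a ``fluctuation'' part.

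First I would handle the main part $(\bfeps(\bfu_h), \bfeps(\bfu_h))_\Omega - (\bfeps(\bfu_h)\cdot\bfn, \bfu_h)_{\partial\Omega}$. The volume term is $\|\bfeps(\bfu_h)\|_\Omega^2$. The boundary term is estimated by Cauchy–Schwarz, the trace-type inverse inequality~\eqref{eq:inverse-estimate-III} to bound $\|\bfeps(\bfu_h)\|_{T\cap\Gamma} \lesssim h_T^{-1/2}\|\bfeps(\bfu_h)\|_T \lesssim h_T^{-1/2}\|\nabla\bfu_h\|_T$ (after summing, $\lesssim h^{-1/2}\|\bfeps(\bfu_h)\|_{\Oast}$, and then Lemma~\ref{lem:control-norm-with-ghost-penalties} together with $\|\bfeps(\bfu_h)\|_\Oast \lesssim \|\nabla\bfu_h\|_\Oast$ brings it back to $\|\bfeps(\bfu_h)\|_\Omega + s_1(\bfu_h,\bfu_h)^{1/2}$ up to constants), paired with $\|h^{-1/2}\bfu_h\|_\Gamma = \bscp{h^{-1}\bfu_h,\bfu_h}_\Gamma^{1/2}$. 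A Young's inequality then absorbs an $\varepsilon\|\bfeps(\bfu_h)\|_\Omega^2$ into the volume term, leaving the defect terms $s_1(\bfu_h,\bfu_h)^{1/2}$ and $\bscp{h^{-1}\bfu_h,\bfu_h}_\Gamma^{1/2}$ with bad constants as permitted by the statement. For the fluctuation part, set $\phi_h = \mcO_h(\bfeps(\bfu_h)) - \bfeps(\bfu_h)$: the volume contribution $(\phi_h, \bfeps(\bfu_h))_\Omega$ and the boundary contribution $(\phi_h\cdot\bfn,\bfu_h)_{\partial\Omega}$ are each Cauchy–Schwarz'd, with $\|h\phi_h\|_\Oast \lesssim s_3(\bfu_h,\bfu_h)^{1/2}$ coming from Lemma~\ref{lem:cip-fluctuation-control} applied componentwise (note $s_3 \lesssim h^2 s_1$ by the $h$-powers in~\eqref{eq:stabilization_operator}, so this is controlled by $s_1(\bfu_h,\bfu_h)^{1/2}$ up to $h$-factors), and the boundary norm $\|\phi_h\|_{T\cap\Gamma}$ handled again via~\eqref{eq:inverse-estimate-III} and inverse estimates; all such terms are absorbable into the defect quantities on the right-hand side.

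Finally, having shown $a_h(\bftau_h, \bfu_h) \gtrsim \|\bfeps(\bfu_h)\|_\Omega^2 - c(s_1(\bfu_h,\bfu_h)^{1/2} + \bscp{h^{-1}\bfu_h,\bfu_h}_\Gamma^{1/2})\|\bfu_h\|_{1,\Omega}$ roughly speaking, I would invoke Korn's inequality~\eqref{eq:korn-III} in the form $\|\bfeps(\bfu_h)\|_\Omega^2 \gtrsim (\|\bfu_h\|_{1,\Omega} - \|\bfu_h\|_{\partial\Omega})\|\bfu_h\|_{1,\Omega}$, i.e. $\|\bfeps(\bfu_h)\|_\Omega \gtrsim \|\bfu_h\|_{1,\Omega} - c\bscp{h^{-1}\bfu_h,\bfu_h}_\Gamma^{1/2}$ after noting $\|\bfu_h\|_{\partial\Omega} \le h^{1/2}\cdot\|h^{-1/2}\bfu_h\|_\Gamma \lesssim \bscp{h^{-1}\bfu_h,\bfu_h}_\Gamma^{1/2}$ (with an absorbed $h^{1/2}$). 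Dividing by $\|\bftau_h\|_\Oast$ and using the stability $\|\mcO_h(\bfeps(\bfu_h))\|_\Oast \lesssim \|\bfeps(\bfu_h)\|_\Oast \lesssim \|\nabla\bfu_h\|_\Oast \lesssim \|\bfu_h\|_{1,\Omega} + s_1(\bfu_h,\bfu_h)^{1/2}$ (again Lemma~\ref{lem:control-norm-with-ghost-penalties}) in the denominator, then a short algebraic rearrangement (as at the end of the proof of Proposition~\ref{prop:mod-inf-sup-bh}), yields~\eqref{eq:mod-inf-sup-ah}.

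The main obstacle I anticipate is the careful bookkeeping of the boundary term $(\bftau_h\cdot\bfn,\bfu_h)_{\partial\Omega}$ and, relatedly, making sure that the ghost-penalty stabilization $s_1$ really does control the passage from norms over $\Omega$ to norms over $\Oast$ for the tensor-valued $\bfeps(\bfu_h)$ — one must check that applying Lemma~\ref{lem:control-norm-with-ghost-penalties} to $\nabla\bfu_h$ (which lives in the velocity gradient, hence is controlled by the $h^1$-weighted jumps in $s_u$) suffices, and that the $h$-power mismatch between $s_1$, $s_3$ and $s_5$ is handled as in~\eqref{eq:b_h-inconsistency-estimate}. Everything else is a routine replay of the $b_h$ argument with Korn's inequality~\eqref{eq:korn-III} substituting for the surjectivity of the divergence.
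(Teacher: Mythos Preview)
Your approach is correct but differs from the paper's in one key choice: you take $\bftau_h = \mcO_h(\bfeps(\bfu_h))$, whereas the paper sets $\bftau_h = \Pi_h^{\ast}(\bfeps(\bfu_h))$, the stabilized $L^2$-projection from~\eqref{eq:stab-projection-def}. Both routes work and share the same skeleton (split into main term $\|\bfeps(\bfu_h)\|_\Omega^2$, fluctuation term, and boundary term; use Korn's inequality~\eqref{eq:korn-III}; control everything by $s_1(\bfu_h,\bfu_h)^{1/2}$ and $\bscp{h^{-1}\bfu_h,\bfu_h}_\Gamma^{1/2}$). The practical difference shows up in two places. First, in handling the fluctuation: with $\Pi_h^{\ast}$ the paper must insert $\mcO_h(\bfeps(\bfu_h))$ and invoke the almost-orthogonality~\eqref{eq:stab-projection-def} to reduce to Lemma~\ref{lem:cip-fluctuation-control}, whereas your Oswald choice gives the fluctuation $\phi_h$ that is \emph{directly} controlled by $s_1(\bfu_h,\bfu_h)^{1/2}$ --- so your fluctuation step is actually shorter. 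Second, and conversely, in the final division step the paper's choice pays off: the $L^2$-stability of $\Pi_h^{\ast}$ on $\Oast$ (Lemma~\ref{lem:l2-stability}) gives $\|\bftau_h\|_{\Oast} \lesssim \|\bfeps(\bfu_h)\|_\Omega$ with no remainder, so one can divide cleanly by $\|\bfeps(\bfu_h)\|_\Omega$. With your Oswald choice you only obtain $\|\bftau_h\|_{\Oast} \lesssim \|\bfeps(\bfu_h)\|_{\Oast}$, and passing from $\Oast$ to $\Omega$ picks up an extra $s_1(\bfu_h,\bfu_h)^{1/2}$ (this needs a piecewise-constant variant of Lemma~\ref{lem:control-norm-with-ghost-penalties} applied to $\bfeps(\bfu_h)$, which holds since the face jumps of $\bfeps(\bfu_h)$ are controlled by those of $\nabla\bfu_h$). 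That extra term forces a short case distinction or algebraic absorption in the final step that the paper avoids. Your ``short algebraic rearrangement'' is therefore a bit longer than you suggest, but it does go through once you observe that whenever the defect $D = s_1^{1/2} + \bscp{h^{-1}\bfu_h,\bfu_h}_\Gamma^{1/2}$ dominates $\|\bfeps(\bfu_h)\|_\Omega$, Korn's inequality~\eqref{eq:korn-III} makes the right-hand side of~\eqref{eq:mod-inf-sup-ah} nonpositive and the estimate trivial.
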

\begin{proof}
  Choose $\bftau_h = \Pi_h^{\ast} \bfeps(\bfu_h)$. Then, by
  adding and subtracting $\bfeps(\bfu_h)$
  and then $\bscp{h^{-1} \bfu_h, \bfu_h}_{\Gamma}^{1/2}$, we obtain
    \begin{align}
      \nonumber
        a_h( \bftau_h ,\, \bfu_h) 
        &= (\Pi^{\ast}_h \bfeps(\bfu_h),\, \bfeps(\bfu_h))_{\Omega}
        - \bscp{\Pi^{\ast}_h \bfeps(\bfu_h) \cdot \bfn,
        \bfu_h}_{\Gamma}
        \\
      \nonumber
        &= \| \bfeps(\bfu_h) \|_{\Omega}^2
            +\left(\Pi^{\ast}_h \bfeps(\bfu_h) -
            \bfeps(\bfu_h),\,\bfeps(\bfu_h)\right)_{\Omega}
          - \bscp{\Pi^{\ast}_h \bfeps(\bfu_h) \cdot \bfn,
          \bfu_h}_{\Gamma}
        \\
      \nonumber
        &\gtrsim 
        \bigl(
          \| \bfeps(\bfu_h) \|_{\Omega}
          + \bscp{h^{-1} \bfu_h, \bfu_h}_{\Gamma}^{1/2}
        \bigr)
        \| \bfeps(\bfu_h) \|_{\Omega}
        - \bscp{h^{-1} \bfu_h, \bfu_h}_{\Gamma}^{1/2} 
        \| \bfeps(\bfu_h) \|_{\Omega}
        \\
      \nonumber
        & \phantom{\gtrsim}
        +
            \left(\Pi^{\ast}_h \bfeps(\bfu_h) -
            \bfeps(\bfu_h),\,\bfeps(\bfu_h)\right)_{\Omega}
            - 
          \bscp{\Pi^{\ast}_h \bfeps(\bfu_h) \cdot \bfn,
          \bfu_h}_{\Gamma}
    \\
      \nonumber
        &\gtrsim 
    \| \bfu_h \|_{1,\Omega}
        \| \bfeps(\bfu_h) \|_{\Omega}
        - \bscp{h^{-1} \bfu_h, \bfu_h}_{\Gamma}^{1/2} 
        \| \bfeps(\bfu_h) \|_{\Omega}
        \\
      \nonumber
        & \phantom{\gtrsim}
        + \underbrace{
            \left(\Pi^{\ast}_h \bfeps(\bfu_h) -
            \bfeps(\bfu_h),\bfeps(\bfu_h)\right)_{\Omega}}_{\text{I}}
            - \underbrace{
          \bscp{\Pi^{\ast}_h \bfeps(\bfu_h) \cdot \bfn,
          \bfu_h}_{\Gamma}
    }_{\text{II}},
    \end{align}
    where we used the
    $L^2$-stability of the stabilized $L^2$-projection
    and variant~\eqref{eq:korn-III} of Korn's inequality
    in the last two steps. 
    We proceed by estimating the terms $I$ and $II$ separately.
    \\
    {Estimate (I)}: The use of the stabilized $L^2$-projection $\Pi^{\ast}_h$ allows to
    insert the Oswald interpolant of $\bfeps(\bfu_h)$ yielding
    \begin{align}
      \nonumber
        I &= 
      ( \Pi^{\ast}_h
      \bfeps(\bfu_h) - \bfeps(\bfu_h),\, \bfeps(\bfu_h) - \mcO_h\bfeps(\bfu_h))_{\Omega} 
      - s_{3}\left(\mcO_h \bfeps(\bfu_h), \Pi^{\ast}_h \bfeps(\bfu_h)\right)
      \nonumber
        \\ 
      \nonumber
        &\gtrsim -s_{1}(\bfu_h,\bfu_h)^{1/2} \| \Pi^{\ast}_h \bfeps( \bfu_h) -\bfeps(
        \bfu_h) \|_{\Omega}
      - s_{3}\left(\mcO_h \bfeps(\bfu_h), \Pi^{\ast}_h \bfeps(\bfu_h)\right)
      \nonumber
        \\
      \nonumber
        &\gtrsim 
      - s_{1}(\bfu_h,\bfu_h)^{1/2} \|\bfeps( \bfu_h) \|_{\Omega}
      - s_{3}\left(\mcO_h \bfeps(\bfu_h), \mcO_h \bfeps(\bfu_h)\right)^{1/2}
        \| \Pi^{\ast}_h \bfeps( \bfu_h) \|_{\Oast},
    \end{align}
    where we successively applied
    Lemma~\ref{lem:control-norm-with-ghost-penalties},
    the $L^2$-boundedness of $\Pi^{\ast}_{h}$, and finally a
    Cauchy-Schwarz inequality.
      By an argument similar to~\eqref{eq:b_h-inconsistency-estimate} in the previous
      Proposition~\ref{prop:mod-inf-sup-bh},
      we can show that
      \begin{align}
      \nonumber
        s_{3}\left(\mcO_h \bfeps(\bfu_h), \mcO_h \bfeps(\bfu_h)\right)
        \lesssim s_{1}(\bfu_h,\bfu_h)
      \end{align}
      and hence we arrive at
      \begin{align}
      \nonumber
        I 
        \gtrsim  
      - s_{1}(\bfu_h,\bfu_h)^{1/2} \|\bfeps( \bfu_h) \|_{\Omega}
      - s_{1}\left( \bfu_h, \bfu_h \right)^{1/2}
        \|  \Pi^{\ast}_h \bfeps( \bfu_h)  \|_{\Oast}.
      \end{align}
    \\
    { Estimate (II)}: Here we use inverse
    estimate~\eqref{eq:inverse-estimate-III} and the Nitsche penalty
    to control the boundary contribution:
    \begin{align}
      \nonumber
      \bscp{\Pi^{\ast}_h \bfeps(\bfu_h) \cdot \bfn, \bfu_h}_{\Gamma}
            &= 
            \bscp{ h^{1/2} \Pi^{\ast}_h \bfeps(\bfu_h) \cdot \bfn,
            h^{-1/2} \bfu_h}_{\Gamma}
            \\
            \nonumber
            &\lesssim \| \Pi^{\ast}_h  \bfeps(\bfu_h) \|_{\Oast}
            \bscp{h^{-1} \bfu_h, \bfu_h}^{1/2}_{\Gamma}
            \\
            \nonumber
            &\lesssim 
            \| \bfeps(\bfu_h) \|_{\Omega}
            \bscp{h^{-1} \bfu_h, \bfu_h}^{1/2}_{\Gamma}.
    \end{align}
    Collecting the estimates for $I$ and $II$ gives
    \begin{align}
      \nonumber
      a_h(\bfu_h, \bftau_h)
      &\gtrsim
      \| \bfu_h\|_{1,\Omega}
      \| \bfeps(\bfu_h) \|_{\Omega}
      - s_{1}(\bfu_h,\bfu_h)^{1/2} \|\bfeps( \bfu_h) \|_{\Omega}
      \\
      &\quad - s_{1}\left( \bfu_h, \bfu_h \right)^{1/2}
        \| \bftau_h \|_{\Oast}
      +  \| \bfeps(\bfu_h) \|_{\Omega}
      \bscp{h^{-1} \bfu_h, \bfu_h}^{1/2}_{\Gamma}.
      \label{eq:a_h-final-estimate}
    \end{align}
    Finally, we divide~\eqref{eq:a_h-final-estimate} by
    $\| \bfeps(\bfu_h) \|_{\Omega}$
    and recall $\| \bftau_h \|_{\Oast} \lesssim \| \bfeps(\bfu_h)
    \|_{\Omega}$ 
    to find that for some $c > 0$
  \begin{align}
      \nonumber
    \dfrac{a_h(\bftau_h, \, \bfu_h)}{\| \bftau_h \|_{\Oast}}
 \gtrsim 
    \dfrac{a_h(\bftau_h, \, \bfu_h)}{\| \bfeps(\bfu_h) \|_{\Omega}}
 \gtrsim
    \| \bfu_h \|_{1,\Omega} -  c ( s_{1}(\bfu_h,\bfu_h)^{1/2} +
    \bscp{h^{-1} \bfu_h, \bfu_h}^{1/2}_{\Gamma})
    \label{eq:mod-inf-sup-ah}
  \end{align}
    if  $a_h(\bftau_h, \, \bfu_h) \geqslant 0$, otherwise we proceed
    as in the previous proof.
\end{proof}

Combining the modified inf-sup conditions~\eqref{eq:mod-inf-sup-bh}
and~\eqref{eq:mod-inf-sup-ah} enable us to prove an inf-sup
condition for the discrete variational
problem~\eqref{prob:tfstokes-cutfem} with respect to the total
approximation space $\V_h$.

\begin{theorem}
  It holds
  \begin{equation}
    \sup_{V_h \in \V_h\setminus \{0\}} 
    \dfrac{A_h(U_h,V_h) + S_h(U_h,V_h)}{\tn V_h \tn_h}
    \gtrsim
    \tn U_h \tn_h, \quad \foralls U_h \in \V_h.
    \label{eq:inf-sup-Ah-Sh}
  \end{equation}
  \label{thm:inf-sup-Ah-Sh}
\end{theorem}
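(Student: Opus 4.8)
The plan is to establish the inf-sup condition \eqref{eq:inf-sup-Ah-Sh} by the standard technique of constructing, for a given $U_h = (\bfsigma_h, \bfu_h, p_h) \in \V_h$, a suitable test function $V_h \in \V_h$ for which $A_h(U_h, V_h) + S_h(U_h, V_h) \gtrsim \tn U_h \tn_h^2$ while simultaneously $\tn V_h \tn_h \lesssim \tn U_h \tn_h$. The obvious first choice is $V_h = U_h$ itself: by antisymmetry of the terms $a_h(\bfsigma_h, \bfv_h) - a_h(\bftau_h, \bfu_h)$ and $b_h(p_h, \bfv_h) - b_h(q_h, \bfu_h)$, the diagonal evaluation gives
\begin{align*}
  A_h(U_h, U_h) + S_h(U_h, U_h) = \frac{1}{2\eta}\|\bfsigma_h\|_{\Omega}^2 + \frac{\gamma_b\eta}{h}\|\bfu_h\|_{\partial\Omega}^2 + S_h(U_h, U_h),
\end{align*}
which controls three of the four contributions in $\tn U_h \tn^2$ (the stress, the Nitsche boundary term) plus the full stabilization $S_h(U_h, U_h)$, but crucially misses control of $2\eta\|\bfeps(\bfu_h)\|_\Omega^2$ and $\frac{1}{2\eta}\|p_h\|_\Omega^2$. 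So the test function must be augmented.

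Next I would bring in the two modified inf-sup conditions. By Proposition~\ref{prop:mod-inf-sup-ah}, there is a $\bftau_h^u \in \mcV_h^{d\times d}$, normalized so that $\|\bftau_h^u\|_{\Oast} \lesssim \|\bfeps(\bfu_h)\|_\Omega$, with $a_h(\bftau_h^u, \bfu_h) \gtrsim \|\bfu_h\|_{1,\Omega}^2 - c\|\bfu_h\|_{1,\Omega}(s_1(\bfu_h,\bfu_h)^{1/2} + \bscp{h^{-1}\bfu_h,\bfu_h}_\Gamma^{1/2})$; testing with $V_h = (-\epsilon_1 \bftau_h^u, 0, 0)$ then produces $+\epsilon_1 a_h(\bftau_h^u, \bfu_h)$ from the $-a_h(\bftau_h, \bfu_h)$ slot in $A_h$, at the cost of the cross term $\frac{\epsilon_1}{2\eta}(\bfsigma_h, \bftau_h^u)_\Omega$ and a stabilization cross term $s_\sigma(\bfsigma_h, \bftau_h^u)$. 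Similarly, by Proposition~\ref{prop:mod-inf-sup-bh} there is a $\bfv_h^p \in \mcV_h^d$ with $\|\bfv_h^p\|_{1,h} \lesssim \|p_h\|_\Omega$ and $b_h(p_h, \bfv_h^p) \gtrsim \|p_h\|_\Omega^2 - c\|p_h\|_\Omega s_3(p_h,p_h)^{1/2}$; testing with $V_h = (0, \epsilon_2 \bfv_h^p, 0)$ yields $+\epsilon_2 b_h(p_h, \bfv_h^p)$ from the $b_h(p_h, \bfv_h)$ slot, with parasitic terms $\epsilon_2[\frac{\gamma_b\eta}{h}\bscp{\bfu_h, \bfv_h^p}_{\partial\Omega} + a_h(\bfsigma_h, \bfv_h^p) + S_h\text{-cross terms}]$. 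The composite test function is $V_h = U_h - \epsilon_1(\bftau_h^u, 0, 0) + \epsilon_2(0, \bfv_h^p, 0)$ (signs chosen so the good terms add).

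The main obstacle — and the heart of the proof — is absorbing all the parasitic cross terms. Each unwanted term is estimated by Cauchy-Schwarz and then split via Young's inequality $ab \leqslant \delta a^2 + \frac{1}{4\delta}b^2$: one factor is hidden in the already-controlled diagonal quantities ($\|\bfsigma_h\|_\Omega$, $S_h(U_h,U_h)^{1/2}$, $\frac{\sqrt{\eta}}{\sqrt h}\|\bfu_h\|_{\partial\Omega}$), using the normalizations $\|\bftau_h^u\|_{\Oast} \lesssim \|\bfeps(\bfu_h)\|_\Omega$ and $\|\bfv_h^p\|_{1,h} \lesssim \|p_h\|_\Omega$ and Lemma~\ref{lem:control-norm-with-ghost-penalties} to pass between $\Omega$- and $\Oast$-norms; the other factor is a small multiple (controlled by $\epsilon_1, \epsilon_2, \delta$) of $\|\bfeps(\bfu_h)\|_\Omega^2$ or $\|p_h\|_\Omega^2$, which we can afford since $\epsilon_1 a_h(\bftau_h^u,\bfu_h)$ and $\epsilon_2 b_h(p_h,\bfv_h^p)$ supply a definite positive multiple of those quantities. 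Terms like $a_h(\bfsigma_h, \bfv_h^p)$ require care: after integration by parts and using the discrete trace/inverse inequalities \eqref{eq:inverse-estimate-II}, \eqref{eq:inverse-estimate-III} together with \eqref{eq:trace-inequality-unfitted}, the boundary part is bounded by $\|\bfsigma_h\|_{\Oast}\|\bfv_h^p\|_{1,h} \lesssim \|\bfsigma_h\|_\Omega \|p_h\|_\Omega$ (again invoking Lemma~\ref{lem:control-norm-with-ghost-penalties}), which is then Young-split. One then fixes first $\epsilon_1$ small enough (depending only on the hidden constants) to dominate the $\bfsigma_h$-coupling, then $\epsilon_2 \ll \epsilon_1$ small enough to dominate the remaining $\bfu_h$- and $S_h$-couplings, leaving
\begin{align*}
  A_h(U_h, V_h) + S_h(U_h, V_h) \gtrsim \frac{1}{2\eta}\|\bfsigma_h\|_\Omega^2 + 2\eta\|\bfeps(\bfu_h)\|_\Omega^2 + \frac{1}{2\eta}\|p_h\|_\Omega^2 + 2\eta\gamma_b\|h^{-1/2}\bfu_h\|_\Gamma^2 + S_h(U_h,U_h),
\end{align*}
i.e. $\gtrsim \tn U_h \tn_h^2$. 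The bound $\tn V_h \tn_h \lesssim \tn U_h \tn_h$ follows by the triangle inequality from $\tn(\bftau_h^u,0,0)\tn_h \lesssim \|\bftau_h^u\|_{\Oast} \lesssim \|\bfeps(\bfu_h)\|_\Omega$ (noting the $s_\sigma$-part of $S_h$ on $\bftau_h^u$ is controlled by $\|\bftau_h^u\|_{\Oast}$ via Lemma~\ref{lem:control-norm-with-ghost-penalties}) and $\tn(0,\bfv_h^p,0)\tn_h \lesssim \|\bfv_h^p\|_{1,h} \lesssim \|p_h\|_\Omega$, plus $S_h(\bfv_h^p,\bfv_h^p)^{1/2} \lesssim \|\bfv_h^p\|_{1,\Oast}$. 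Dividing through gives \eqref{eq:inf-sup-Ah-Sh}. The delicate bookkeeping is entirely in the ordering and sizing of $\epsilon_1, \epsilon_2$ relative to the fixed constants from the two preparatory propositions and from Korn's inequality.
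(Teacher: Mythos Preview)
Your proposal is correct and follows essentially the same route as the paper: test first with $U_h$ to secure the stress, Nitsche boundary, and stabilization contributions, then augment by the supremum-attaining test functions from Propositions~\ref{prop:mod-inf-sup-bh} and~\ref{prop:mod-inf-sup-ah} (scaled by small parameters) to recover the missing $\|p_h\|_\Omega^2$ and $\|\bfu_h\|_{1,\Omega}^2$ control, absorbing all cross terms via Cauchy--Schwarz and Young with Lemma~\ref{lem:control-norm-with-ghost-penalties}. The only cosmetic deviation is that the paper chooses its two scaling parameters $\alpha,\beta$ independently small---since $A_h+S_h$ is bilinear the three partial test functions do not interact, so the hierarchy $\epsilon_2 \ll \epsilon_1$ you impose is unnecessary.
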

\begin{proof}
  Given $U_h = (\bfsigma_h, \bfu_h, p_h)$, we construct a proper test
  function $V_h$ in four steps. 
  \\
  {\bf I}: Choosing $V^1_h = U_h$, we obtain
  \begin{align}
    \label{eq:inf-sup-A1-term}
    A_h(U_h,V_h^1)  + S_h(U_h, V_h^1)
    = \dfrac{1}{2 \eta} \| \bfsigma_h \|_{\Omega}^2
    + 2 \eta \gamma_b \|h^{-1/2}\bfu_h \|_{\Gamma}^2
    + S_h(U_h, U_h).
  \end{align}
  \\
  {\bf II}: Now we choose $V_h^2 = (0, \bfv_h^p,0)$ , where $\bfv_h^p$
  attains the supremum in~\eqref{eq:mod-inf-sup-bh} for given $p_h$ and is rescaled
  such that $\eta \| \bfv_h^p \|_{1, h}^2 = \tfrac{1}{\eta} \| p_h
  \|_{\Omega}^2$. Then writing
    $A_2 = A_h(U_h,V_h^2) + S_h(U_h,V_h^2)$ and applying
    Cauchy-Schwarz and the modified inf-sup
    condition~\eqref{eq:mod-inf-sup-bh}, we obtain
  \begin{align}
    A_2 
    &= a_h(\sigma_h, \bfv_h^p) + b_h(p_h, \bfv_h^p) + s_u(\bfu_h,\bfv_h^p)  + \frac{\gamma_b \eta}{h} 
    \bscp{
    u_h,\,v_h^p}_{\Gamma} 
    \nonumber
    \\
    &\gtrsim
    - \| \bfsigma_h \|_{\Omega} \| \bfeps(\bfv_h^p) \|_{\Omega} 
    - \| h^{1/2} \bfsigma_h \|_{\Gamma} \| h^{-1/2} \bfv_h^p \cdot n \|_{\Gamma}
    \nonumber
    \\
    &\quad 
    + \| p_h \|_{\Omega}  \| \bfv_h^p \|_{1,h} 
    - s_p(p_h, p_h)^{1/2} \| \bfv_h^p \|_{1,h} 
    - \dfrac{\eta \gamma_b}{h} \|\bfu_h \|_{\Gamma} \| \bfv_h \|_{\Gamma}
    + s_u(\bfu_h, \bfv_h^p)
    \nonumber
    \\
    &\gtrsim
    - \dfrac{\delta^{-1}}{\eta} \| \bfsigma_h \|_{\Omega}^2
    -        \delta \eta \| \bfeps(\bfv_h^p) \|_{\Omega}^2 
    - \dfrac{\delta^{-1}}{\eta} \| h^{1/2} \bfsigma_h \|_{\Gamma}^2
    -        \delta \eta \| h^{-1/2} \bfv_h^p \cdot n \|_{\Gamma}^2
    \nonumber
    \\
    &\quad 
    + \dfrac{1}{\eta}\| p_h \|_{\Omega}^2 
    - \dfrac{\delta^{-1}}{\eta} s_p(p_h, p_h) - \delta \eta  \| \bfv_h^p
    \|_{1,h}^2
    \nonumber
  - \delta^{-1} \eta \gamma_b \|h^{-1/2} \bfu_h \|_{\Gamma}^2
  - \delta \eta \gamma_b \|h^{-1/2} \bfv_h^p \|_{\Gamma}^2
    \nonumber
    \\
    &\quad 
    - \dfrac{\delta^{-1}}{\eta} s_u(\bfu_h, \bfu_h)
    - \delta \eta s_u(\bfv_h^p, \bfv_h^p),
  \end{align} 
  where a $\delta$-weighted arithmetic-geometric inequality was used
  in the last step.
  Due to the scaling choice $\eta \| \bfv_h^p \|_{1,h}^2 = \tfrac{1}{\eta} \| p_h
  \|_{\Omega}^2$, all $\bfv_h^p$-related terms can be absorbed into $\| p_h \|_{\Omega}$
  by choosing $\delta$ small enough. 
  If we then combine an inverse estimate and
  Lemma~\ref{lem:control-norm-with-ghost-penalties}
  to estimate the boundary term $\| h^{1/2} \bfsigma_h \|_{\Gamma}$ by
  \begin{align}
    \dfrac{1}{\eta}\| h^{1/2} \bfsigma_h \|_{\Gamma}^2 
    \lesssim 
    \dfrac{1}{ \eta}
    \| 
    \bfsigma_h
    \|_{\Oast}^2
    \lesssim  
    \dfrac{1}{ \eta}
    \| \bfsigma_h \|_{\Omega}^2 + 
    s_{\bfsigma}(\bfsigma_h,
    \bfsigma_h),
  \end{align}
  we see that there exists
  constants $C_p$ such that 
  \begin{align}
    A_2 
    &\gtrsim
    \dfrac{1}{\eta}\| p_h \|_{\Oast}^2 
    - C_p 
    \left(
    \dfrac{1}{\eta} \| \bfsigma_h \|_{\Omega}^2
    + \eta \gamma_b \|h^{-1/2}\bfu_h \|_{\Gamma}^2
    + S_h(U_h, U_h)
    \right),
    \label{eq:inf-sup-A2-term}
  \end{align}
  where we also applied Lemma~\ref{lem:control-norm-with-ghost-penalties}
  to pass from  $\| p_h \|_{\Omega}$ to  $\| p_h \|_{\Oast}$ 
  via the term $s_p(p_h, p_h)$.

  {\bf III}: Next, we pick $V_h^3 = (\bftau_h^u, 0 ,0)$, where
  $\bftau^u_h$ attains the supremum in~\eqref{eq:mod-inf-sup-ah} for the given $\bfu_h$
  and is rescaled such that $ \dfrac{1}{\eta} \| \bftau_h^u
  \|_{\Oast}^2 =  \eta \| \bfu_h \|_{1,\Omega}^2$.
  Introducing $A_3 =  A_h(U_h,V_h^3) +
  S_h(U_h,V_h^3)$,
  we can bound $A_3$ along the same lines as in the previous step:
  \begin{align}
    A_3 
    &= \dfrac{1}{2 \eta} (\bfsigma_h, \bftau_h^p)_{\Omega}
    - a_h(\bftau_h^p, \bfu_h)_{\Omega}
    + s_{\bfsigma}(\bfsigma_h, \bftau_h^u)
    \nonumber
    \\
    & \gtrsim
    - \dfrac{\delta^{-1}}{\eta} \| \bfsigma_h \|_{\Omega}^2
    - \dfrac{\delta}{ \eta} \| \bftau_h^u \|_{\Omega}^2
    + \eta \| \bfu_h \|_{1,\Omega}^2 
    - \delta^{-1} s_u(\bfu_h, \bfu_h)
    - \delta^{-1} \eta \| h^{-1/2} \bfu_h \|_{\Gamma}^2
    \nonumber
    \\
    &\quad 
    - \delta^{-1} s_{\sigma}(\bfsigma_h, \bfsigma_h)
    - \delta s_{\sigma}(\bftau_h^u, \bftau_h^u)
    \nonumber
    \\
    & \gtrsim
    \eta \| \bfu_h \|_{1,\Oast}^2 -
     C_u 
     \left(
    \dfrac{1}{\eta} \| \bfsigma_h \|_{\Omega}^2
    + \eta \gamma_b \|h^{-1/2}\bfu_h \|_{\Gamma}^2
    + S_h(U_h, U_h)
     \right).
    \label{eq:inf-sup-A3-term}
  \end{align}
  {\bf IV}: Finally, we define $V_h = V_h^1 + \alpha V_h^2 + \beta
  V_h^3$. 
  Combining the
  estimates~\eqref{eq:inf-sup-A1-term}, \eqref{eq:inf-sup-A2-term}
  and \eqref{eq:inf-sup-A3-term}, we observe that by choosing $\alpha$
  and $\beta$ small enough, it holds that
  \begin{align}
    A_h(U_h,V_h) + S_h(U_h,V_h)
    \gtrsim \tn U_h \tn_h^2.
    \nonumber
  \end{align}
  To conclude the proof, we note that by our choices of $V_h^i$, $i =
  1,2,3$,
  \begin{align*}
    \tn V_h^1 \tn_h^2 &= \tn U_h \tn_h^2,
    \\
    \tn V_h^2 \tn_h^2 &= 2 \eta \| \epsilon(v_h^p) \|_{\Omega}^2 + 2 \eta \gamma_b
    \|h^{-1/2} v_h^p \|_{\Gamma}^2 + s_u(v_h^p, v_h^p)
    \lesssim 
    \eta \| v_h^p \|_{1,h}^2 = \dfrac{1}{\eta}
    \| p_h \|^2_{\Omega},
    \\
    \tn V_h^3 \tn_h^2 &= \dfrac{1}{2 \eta} \tn \tau_h^u \tn_{\Omega}^2 +
    s_{\sigma}(\tau_h^u, \tau_h^u)
    \lesssim \dfrac{1}{\eta} \| \tau_h^u \|_{\Oast}
    = \eta \| u_h \|^2_{1,\Omega},
  \end{align*}
  and thus $\tn V_h \tn_h \lesssim \tn U_h \tn_h$
  which proves the desired estimate.
\end{proof}

\section{A priori estimates}
\label{sec:apriori-analysis}
In this section, we state and prove the a priori estimate for the error in
the discrete solution, defined by problem~\eqref{prob:tfstokes-cutfem}.
Before we present the main result, we state two lemmas which quantify
the effect of the consistency error introduced by the
stabilization term $S_h$. The first lemma ensures that 
a weakened form of the Galerkin orthogonality holds:
\begin{proposition}
  Let $(\sigma_h, u_h, p_h) \in \V_h$ be the finite element
  approximation defined by~\eqref{prob:tfstokes-cutfem}
  and assume that
  the weak solution  
  $(\bfsigma, \bfu, p)$ 
  of the three field Stokes problem~\eqref{eq:tfstokes-weak}
  is in  $[H^1(\Omega)]^{d\times d} 
  \times [H^2_0(\Omega)]^d \times H^1(\Omega)$.
  Then
  \begin{align}
    A_h(U - U_h, V_h) =  S(U_h, V_h).
  \end{align}
\end{proposition}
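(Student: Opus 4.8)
The plan is to reduce the claim to a (strong) consistency identity for the exact solution. Concretely, I will first show that
\[ A_h(U, V_h) = L_h(V_h) \qquad \foralls V_h = (\bftau_h, \bfv_h, q_h) \in \V_h, \]
i.e.\ that $U = (\bfsigma, \bfu, p)$ solves the discrete equation \emph{without} the stabilization term. Granting this, subtracting the discrete problem~\eqref{prob:tfstokes-cutfem} gives, by bilinearity of $A_h$,
\[ A_h(U - U_h, V_h) = A_h(U, V_h) - A_h(U_h, V_h) = L_h(V_h) - \bigl( L_h(V_h) - S_h(U_h, V_h) \bigr) = S_h(U_h, V_h), \]
which is the assertion. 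Note that $S_h$ is genuinely inconsistent (its ghost/interior-penalty face terms do not vanish on $U_h$, and would not vanish on $U$ either), which is precisely why this defect appears on the right-hand side rather than $0$.

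To establish consistency I would insert $U$ into the definition~\eqref{eq:A_h-definition} of $A_h$ and process the six contributions in blocks, using the strong equations~\eqref{eq:tfstokes-strong}. The extra-stress block $\tfrac{1}{2\eta}(\bfsigma, \bftau_h)_{\Omega} - a_h(\bftau_h, \bfu)$ collapses: inserting $\bfsigma = 2\eta\,\bfeps(\bfu)$ makes the two volume terms $(\bfeps(\bfu), \bftau_h)_{\Omega}$ and $(\bftau_h, \bfeps(\bfu))_{\Omega}$ cancel, leaving only the Nitsche consistency term $\bscp{\bftau_h \cdot \bfn, \bfu}_{\partial \Omega} = \bscp{\bftau_h \cdot \bfn, \bfg}_{\partial \Omega}$ since $\bfu = \bfg$ on $\partial\Omega$. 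For the momentum block $a_h(\bfsigma, \bfv_h) + b_h(p, \bfv_h)$ I would integrate by parts on $\Omega$: because $\bfsigma$ is symmetric, $(\bfsigma, \bfeps(\bfv_h))_{\Omega} = (\bfsigma, \nabla \bfv_h)_{\Omega} = -(\nabla\cdot\bfsigma, \bfv_h)_{\Omega} + \bscp{\bfsigma\cdot\bfn, \bfv_h}_{\partial\Omega}$, and likewise $-(p, \nabla\cdot\bfv_h)_{\Omega} = (\nabla p, \bfv_h)_{\Omega} - \bscp{p\bfn, \bfv_h}_{\partial\Omega}$; the two newly produced boundary terms cancel exactly against the consistency boundary terms already sitting in $a_h$ and $b_h$, leaving $(-\nabla\cdot\bfsigma + \nabla p, \bfv_h)_{\Omega} = (\bff, \bfv_h)_{\Omega}$. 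The Nitsche penalty term $\tfrac{\gamma_b \eta}{h}\bscp{\bfu, \bfv_h}_{\partial\Omega}$ equals $\tfrac{\gamma_b \eta}{h}\bscp{\bfg, \bfv_h}_{\partial\Omega}$, and finally $-b_h(q_h, \bfu) = (q_h, \nabla\cdot\bfu)_{\Omega} - \bscp{q_h\bfn, \bfu}_{\partial\Omega} = -\bscp{q_h\bfn, \bfg}_{\partial\Omega}$ using the divergence constraint $\nabla\cdot\bfu = 0$. Summing the four reduced contributions reproduces term by term the right-hand side $L_h(V_h)$ as defined in~\eqref{eq:L_h-definition}.

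There is no real analytic difficulty here: the regularity hypothesis $(\bfsigma,\bfu,p) \in [H^1(\Omega)]^{d\times d} \times [H^2_0(\Omega)]^d \times H^1(\Omega)$ is exactly what is needed to make sense of the traces $\bfsigma\cdot\bfn$ and $\nabla\cdot\bfsigma$ and to justify every integration by parts on $\Omega$ against the $H^1(\Oast)$-conforming, piecewise-linear test functions. The only thing requiring care is the bookkeeping — keeping track of the signs arising from the skew appearance of $a_h$ and $b_h$ in $A_h$, and remembering that $\bfeps(\bfv_h)$ may be replaced by $\nabla\bfv_h$ only when paired against the symmetric tensor $\bfsigma$. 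In that sense the "main obstacle" is purely notational: verifying that all boundary contributions introduced by the Nitsche terms and by the integrations by parts cancel in pairs so that exactly $L_h(V_h)$ remains.
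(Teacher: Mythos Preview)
Your proposal is correct and follows exactly the approach indicated in the paper: the paper's proof simply states that it ``follows immediately from the definition of the weak variational problem and the easily verified fact that the continuous solution $U$ satisfies $A_h(U,V_h) = L_h(V_h)$,'' and you have carried out precisely this verification in detail before subtracting the discrete equation. There is nothing to add.
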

\begin{proof}
  The proof follows immediately from the definition of the weak
  variational problem~\eqref{eq:tfstokes-weak} and the easily verified fact
  that the continuous solution $U$ satisfies $A_h(U,V_h) = L_h(V_h)$.
\end{proof}

The second lemma ensures that
the consistency error does not make the convergence rate deteriorate. 
\begin{proposition}
  Suppose that $U = (\bfsigma, \bfu, p) \in [H^1(\Omega)]^{d \times d} \times
  [H^2(\Omega)]^d \times H^1(\Omega)$, then it holds that
  \begin{align}
    \label{eq:consistency-error}
    | S_h(\mcC_h^* U, V_h) | \lesssim h
    \left(
    \eta^{1/2}\|\bfu\|_{2,\Omega} 
    +\frac{1}{\eta^{1/2}}\|p\|_{1,\Omega}
    +\frac{1}{\eta^{1/2}}\|\bfsigma\|_{1,\Omega}
    \right)
    \tn V_h \tn_h.
  \end{align}
\end{proposition}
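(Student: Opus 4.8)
The plan is to reduce the claim, via Cauchy--Schwarz, to three scalar interpolation-defect estimates, one for each piece of the stabilization operator. Writing $\mcC_h^* U = (\mcC_h^*\bfsigma, \mcC_h^*\bfu, \mcC_h^* p)$, we have $S_h(\mcC_h^* U, V_h) = s_{\bfsigma}(\mcC_h^*\bfsigma, \bftau_h) + s_u(\mcC_h^*\bfu, \bfv_h) + s_p(\mcC_h^* p, q_h)$, and since each of $s_{\bfsigma},s_u,s_p$ is a symmetric positive semi-definite bilinear form, a Cauchy--Schwarz inequality on each term followed by the discrete Cauchy--Schwarz inequality in $\R^3$ over the three contributions gives
\begin{align*}
  |S_h(\mcC_h^* U, V_h)|
  \leqslant
  \bigl( s_{\bfsigma}(\mcC_h^*\bfsigma,\mcC_h^*\bfsigma) + s_u(\mcC_h^*\bfu,\mcC_h^*\bfu) + s_p(\mcC_h^* p,\mcC_h^* p) \bigr)^{1/2}\, S_h(V_h,V_h)^{1/2}.
\end{align*}
The second factor is bounded by $\tn V_h \tn_h$ directly from the definition~\eqref{eq:triple-norm-disc} of the discrete norm, so it only remains to bound each diagonal term by $h^2$ times the appropriate viscosity-weighted Sobolev norm of $U$.

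For the pressure and the extra-stress, which are only assumed to lie in $H^1$, I would not use the continuous solution at all on the faces but estimate crudely with the inverse trace inequality~\eqref{eq:inverse-estimate-II}: for $F \in \mcF_i$ with neighbouring elements $T_F^{\pm}$ one has $\|\jump{\nabla \mcC_h^* p}_n\|_F \lesssim h^{-1/2}(\|\nabla \mcC_h^* p\|_{T_F^+} + \|\nabla \mcC_h^* p\|_{T_F^-})$, whence
\begin{align*}
  s_{3}(\mcC_h^* p, \mcC_h^* p) = \sum_{F\in\mcF_i} h^3 \|\jump{\nabla\mcC_h^* p}_n\|_F^2 \lesssim h^2 \|\nabla \mcC_h^* p\|_{\Oast}^2 \lesssim h^2 \|p\|_{1,\Omega}^2,
\end{align*}
where the last step uses the triangle inequality together with~\eqref{eq:interpest0-ast} applied for $r=s=1$ (equivalently, $H^1$-stability of the Cl\'ement operator and boundedness~\eqref{eq:extension-operator-boundedness} of the extension). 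Inserting the viscosity weights from~\eqref{eq:stabilization-operator-p} and~\eqref{eq:stabilization-operators} yields $s_p(\mcC_h^* p, \mcC_h^* p)^{1/2} \lesssim \eta^{-1/2} h \|p\|_{1,\Omega}$, and since $\Fast \subset \mcF_i$ the same computation gives $s_{\bfsigma}(\mcC_h^*\bfsigma, \mcC_h^*\bfsigma)^{1/2} \lesssim \eta^{-1/2} h \|\bfsigma\|_{1,\Omega}$.

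The velocity term is the one place where a full extra power of $h$ must be gained, and this is the main obstacle. Here I would use the $H^2$-regularity of $\bfu$: since $\bfu^* \in [H^2(\Oast)]^d$, the gradient $\nabla\bfu^*$ has single-valued normal traces across interior faces, so $\jump{\nabla\bfu^*}_n = 0$ and therefore $\jump{\nabla\mcC_h^*\bfu}_n = \jump{\nabla(\mcC_h^*\bfu - \bfu^*)}_n$. Applying the trace inequality~\eqref{eq:trace-inequality} to $\nabla(\mcC_h^*\bfu - \bfu^*) \in H^1(T)$ on each $T \supset F$, using that $\mcC_h^*\bfu$ is piecewise affine so that its second derivatives vanish, and then invoking the Cl\'ement estimate~\eqref{eq:interpest0} with $r=1,\, s=2$, gives $\|\jump{\nabla\mcC_h^*\bfu}_n\|_F \lesssim h^{-1/2}\|\nabla(\mcC_h^*\bfu-\bfu^*)\|_{\omega(F)} + h^{1/2}|\bfu^*|_{2,\omega(F)} \lesssim h^{1/2}|\bfu^*|_{2,\omega(F)}$, with $\omega(F)$ the element patch around $F$ as in~\eqref{eq:interpest1}. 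Summing over faces and using~\eqref{eq:extension-operator-boundedness},
\begin{align*}
  s_{1}(\mcC_h^*\bfu, \mcC_h^*\bfu) = \sum_{F\in\mcF_i} h\,\|\jump{\nabla\mcC_h^*\bfu}_n\|_F^2 \lesssim h^2\, |\bfu^*|_{2,\Oast}^2 \lesssim h^2\, \|\bfu\|_{2,\Omega}^2,
\end{align*}
so $s_u(\mcC_h^*\bfu, \mcC_h^*\bfu)^{1/2} \lesssim \eta^{1/2} h \|\bfu\|_{2,\Omega}$ after accounting for the weights in~\eqref{eq:stabilization-operator-u}. Substituting the three bounds into the Cauchy--Schwarz inequality of the first paragraph and using $(\sum_i a_i^2)^{1/2} \leqslant \sum_i |a_i|$ on the prefactor yields exactly~\eqref{eq:consistency-error}; everything beyond the velocity estimate is routine bookkeeping of generic constants and powers of $\eta$.
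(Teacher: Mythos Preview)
Your proof is correct and follows essentially the same strategy as the paper: the crucial $H^2$-consistency trick for the velocity term (inserting $\bfu^*$ so that $\jump{\nabla\bfu^*}_n = 0$ and then using the trace inequality plus the Cl\'ement estimate) and the cruder inverse-trace bound for the $H^1$ pressure and stress are exactly the ingredients the paper uses. The only difference is organizational---you first apply Cauchy--Schwarz on each symmetric stabilization form to reduce to the diagonal terms and then invoke $S_h(V_h,V_h)^{1/2}\leqslant\tn V_h\tn_h$, whereas the paper bounds each mixed term $s_u(\mcC_h^*\bfu,\bfv_h)$, $s_p(\mcC_h^*p,q_h)$, $s_\sigma(\mcC_h^*\bfsigma,\bftau_h)$ directly; your packaging is slightly tidier on the $V_h$ side but otherwise equivalent.
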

\begin{proof}
  By definition, 
  \begin{align*}
    S_h(\mcC_h^* U, V_h) 
    = s_{\sigma}(\mcC_h^* \bfsigma, \bftau_h) 
    + s_u(\mcC_h^* u, v_h) 
    + s_p(\mcC_h^* p, q_h).
  \end{align*}
  We start with the velocity related terms.  Since we assume that 
   $u \in H^2(\Omega) \cap H^1_0(\Omega)$,
  we have $s_u(\bfu^{\ast},\bfv_h) = 0$ for its extension $u^{\ast} =
  Eu$ to $\Oast$. Exploiting this fact together with the trace
  inequality~\eqref{eq:trace-inequality}, the inverse
  estimate~\eqref{eq:inverse-estimate-III}, the interpolation estimate~\eqref{eq:interpest1}
  and the stability of the interpolation operator $\mcC_h^*$, we
  might estimate the velocity part of the consistency error as
  follows:
  \begin{align*}
    | s_u(\mcC_h^* \bfu, \bfv_h) |
    &=  
    | s_u(\mcC_h^* \bfu - \bfu^{\ast}, \bfv_h) |
    \lesssim
    \eta \sum_{F\in \mcF_i} 
    h^{1/2}\| \nablan (\mcC_h^* \bfu - \bfu^{\ast}) \|_{F}
    \,
    h^{1/2}\| \nablan \bfv_h \|_{F}
    \\
    & \lesssim
    \eta^{1/2}  \left(\sum_{T\in \mesh} 
    \left(h
    \| \nabla (\mcC_h^* \bfu - \bfu^{\ast}) \|^2_{T}
    + \|  (\mcC_h^* \bfu - \bfu^{\ast}) \|^2_{T}
  \right) \right)^{\frac12}
    \eta^{1/2} \| \nabla \bfv_h \|_{\Oast}
    \\
    &\lesssim h \eta^{1/2} \| \bfu^{\ast} \|_{2,\Oast} \| \bfv_h \|_{1,\Oast}
    \lesssim  h \eta^{1/2} \| \bfu \|_{2,\Omega} \tn V_h \tn_h.
  \end{align*}

  For the pressure, applying the inverse
  inequality~\eqref{eq:inverse-estimate-II} and the boundedness of the
  interpolation operator~\eqref{eq:interpest1} gives
  \begin{align*}
    | s_p(\mcC_h^* p, q_h ) |
    \lesssim
    h \eta^{-1} \| \nabla \mcC_h^* p \|_{\Oast}  
    h \| \nabla q_h \|_{\Oast}
    \lesssim
    h \eta^{-1} \| p \|_{1,\Omega}  
    \| q_h \|_{\Oast}
    \lesssim
    h \eta^{-1/2} \| p \|_{1,\Omega}  
    \tn V_h \tn_h.
  \end{align*}
  Finally, we observe that the consistency error in $\bfsigma_h$
  might be bounded by applying the same steps as for the pressure
  related terms.
\end{proof}

We are now in the position to state our main result.
\begin{theorem}
  Let $U = (\bfsigma,\bfu,p) \in [H^1(\Omega)]^{d \times d} \times
  [H^2(\Omega)]^d \times H^1(\Omega)$ be the solution of the
  three field Stokes problem~\eqref{eq:tfstokes-strong} and let 
  $U_h = (\bfsigma_h, \bfu_h, p_h)$ be the solution to the
  discrete problem~\eqref{prob:tfstokes-cutfem}.
  Then the following error estimate holds:
  \[
  \tnorm{U-U_h}\lesssim h 
  \left(
    \eta^{1/2}\|\bfu\|_{2,\Omega} +
    \frac{1}{\eta^{1/2}}\|p\|_{1,\Omega} + \frac{1}{\eta^{1/2}}
    \|\bfsigma\|_{1,\Omega}
  \right),
  \]
  where the hidden constant is independent of how the boundary cuts the mesh.
\end{theorem}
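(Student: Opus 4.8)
The plan is a standard Strang-type argument: split the error into an approximation part and a discrete part, bound the approximation part by the interpolation estimates of Section~\ref{sec:approximation-properties}, and bound the discrete part with the inf-sup condition of Theorem~\ref{thm:inf-sup-Ah-Sh} combined with the weakened Galerkin orthogonality and the consistency estimate~\eqref{eq:consistency-error}.

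First I would write $U - U_h = \rho + \theta_h$, where $\rho = (\rho_{\sigma},\rho_u,\rho_p) := U - \mcC_h^{\ast} U$ is the (extended) Cl\'ement interpolation error and $\theta_h := \mcC_h^{\ast} U - U_h \in \V_h$. By the triangle inequality, $\tn U - U_h \tn \leqslant \tn \rho \tn + \tn \theta_h \tn \leqslant \tn \rho \tn + \tn \theta_h \tn_h$. The term $\tn \rho \tn$ is treated componentwise in~\eqref{eq:triple-norm}: the three bulk contributions $\tfrac{1}{2\eta}\|\rho_{\sigma}\|_{\Omega}^2$, $2\eta\|\bfeps(\rho_u)\|_{\Omega}^2$, $\tfrac{1}{2\eta}\|\rho_p\|_{\Omega}^2$ are bounded by $h^2$ times the square of the corresponding weighted Sobolev norm directly from~\eqref{eq:interpest0-ast} (with $s=1$ for $\sigma,p$ and $s=2,r=1$ for $u$), while for the boundary contribution $2\eta\gamma_b\|h^{-1/2}\rho_u\|_{\Gamma}^2$ I would apply the unfitted trace inequality~\eqref{eq:trace-inequality-unfitted} to $\rho_u$ on each cut element together with~\eqref{eq:interpest0}, which gives $\|h^{-1/2}\rho_u\|_{\Gamma} \lesssim h \|\bfu\|_{2,\Omega}$. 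Collecting these yields $\tn \rho \tn \lesssim h\bigl(\eta^{1/2}\|\bfu\|_{2,\Omega} + \eta^{-1/2}\|p\|_{1,\Omega} + \eta^{-1/2}\|\bfsigma\|_{1,\Omega}\bigr)$.

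For the discrete part, I would invoke Theorem~\ref{thm:inf-sup-Ah-Sh}: there is $V_h \in \V_h$, normalised so that $\tn V_h \tn_h \lesssim \tn \theta_h \tn_h$, with $\tn \theta_h \tn_h^2 \lesssim A_h(\theta_h,V_h) + S_h(\theta_h,V_h)$. Writing $\theta_h = (U-U_h) - \rho$, using the weakened Galerkin orthogonality $A_h(U-U_h,V_h) = S_h(U_h,V_h)$ and the bilinearity identity $S_h(\theta_h,V_h) = S_h(\mcC_h^{\ast}U,V_h) - S_h(U_h,V_h)$, everything collapses to
\[
  A_h(\theta_h,V_h) + S_h(\theta_h,V_h) = S_h(\mcC_h^{\ast}U,V_h) - A_h(\rho,V_h).
\]
The first term is controlled directly by~\eqref{eq:consistency-error}, which bounds $|S_h(\mcC_h^{\ast}U,V_h)|$ by $h$ times the desired data norm times $\tn V_h \tn_h$. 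What remains, and what I expect to be the main obstacle, is a continuity bound $|A_h(\rho,V_h)| \lesssim h\bigl(\eta^{1/2}\|\bfu\|_{2,\Omega} + \eta^{-1/2}\|p\|_{1,\Omega} + \eta^{-1/2}\|\bfsigma\|_{1,\Omega}\bigr)\tn V_h \tn_h$. Expanding $A_h(\rho,V_h)$ from~\eqref{eq:A_h-definition}, the bulk terms $\tfrac{1}{2\eta}(\rho_{\sigma},\bftau_h)_{\Omega}$, $(\rho_{\sigma},\bfeps(\bfv_h))_{\Omega}$, $(\rho_p,\nabla\cdot\bfv_h)_{\Omega}$, $(\nabla\cdot\rho_u,q_h)_{\Omega}$ follow from Cauchy--Schwarz with the right $\eta$-weights and the interpolation estimates above. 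The delicate contributions are the Nitsche boundary terms $\bscp{\rho_{\sigma}\cdot\bfn,\bfv_h}_{\Gamma}$, $\bscp{\bftau_h\cdot\bfn,\rho_u}_{\Gamma}$, $\bscp{\rho_p\,\bfn,\bfv_h}_{\Gamma}$, $\bscp{q_h\bfn,\rho_u}_{\Gamma}$ and the penalty term $\tfrac{\gamma_b\eta}{h}\bscp{\rho_u,\bfv_h}_{\Gamma}$: for factors carrying the discrete test functions $\bftau_h$ or $q_h$ on $\Gamma$ I would combine~\eqref{eq:trace-inequality-unfitted} with an inverse estimate to get $\|\bftau_h\|_{\Gamma}\lesssim h^{-1/2}\|\bftau_h\|_{\Oast}$ and then use Lemma~\ref{lem:control-norm-with-ghost-penalties} to bound $\|\bftau_h\|_{\Oast}$ (resp. $\|q_h\|_{\Oast}$) by $\|\bftau_h\|_{\Omega}+s_{\sigma}(\bftau_h,\bftau_h)^{1/2}\lesssim \tn V_h\tn_h$; for factors carrying $\rho_{\sigma},\rho_p,\rho_u$ on $\Gamma$ I would again use~\eqref{eq:trace-inequality-unfitted} with~\eqref{eq:interpest0}, and the boundary traces of $\bfv_h$ get absorbed via the Nitsche term $2\eta\gamma_b\|h^{-1/2}\bfv_h\|_{\Gamma}^2$ in $\tn V_h\tn_h$. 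Keeping track of the $\eta$-powers so that the final weights are exactly $\eta^{1/2}$ and $\eta^{-1/2}$ is the main bookkeeping, but it is forced by the structure of~\eqref{eq:triple-norm}.

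Combining $\tn\theta_h\tn_h^2 \lesssim |S_h(\mcC_h^{\ast}U,V_h)| + |A_h(\rho,V_h)|$ with $\tn V_h\tn_h \lesssim \tn\theta_h\tn_h$ and dividing by $\tn\theta_h\tn_h$ gives the same $O(h)$ bound for $\tn\theta_h\tn_h$, and the triangle inequality from the first step then yields the claim. Every mesh-dependent step used only assumptions G1--G3, the unfitted trace inequality, and the ghost-penalty lemmas, so the hidden constant is independent of how $\Gamma$ cuts the mesh.
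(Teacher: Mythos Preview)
Your proposal is correct and follows essentially the same route as the paper: the same Strang-type splitting $U-U_h=(U-\mcC_h^{\ast}U)+(\mcC_h^{\ast}U-U_h)$, the same use of the inf-sup condition~\eqref{eq:inf-sup-Ah-Sh} together with the weakened Galerkin orthogonality to reduce the discrete part to $A_h(U-\mcC_h^{\ast}U,V_h)-S_h(\mcC_h^{\ast}U,V_h)$, and the same term-by-term estimation of the $A_h$ contributions via Cauchy--Schwarz, the unfitted trace inequality, and the ghost-penalty control from Lemma~\ref{lem:control-norm-with-ghost-penalties}. If anything, you are slightly more explicit than the paper about how the boundary factors $\|h^{1/2}\bftau_h\|_{\Gamma}$ and $\|h^{1/2}q_h\|_{\Gamma}$ are absorbed into $\tn V_h\tn_h$ through $s_{\sigma}$ and $s_p$; the only term you omitted from your list, the bulk pairing $(\bftau_h,\bfeps(\rho_u))_{\Omega}$ coming from $-a_h(\bftau_h,\rho_u)$, is handled identically to the others.
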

\begin{proof}
  Using the triangle inequality $\tn U - U_h \tn \lesssim \tn U -
  \mcC_h^* U \tn + \tn U_h - \mcC_h^* U \tn_h$ and the standard
  interpolation estimates~\eqref{eq:interpest0}, we can see that
  the error $\tn U - \mcC_h^* U \tn$ satisfies the
  desired estimate. By inf-sup condition~\eqref{eq:inf-sup-Ah-Sh}
  and the weak Galerkin orthogonality, there exists a $V_h$ such that
  \begin{align}
    \tn U_h -  \mcC_h^* U \tn_h
    &\lesssim 
    \dfrac{A_h(U_h -  \mcC_h^* U, V_h) + S_h(U_h - \mcC_h^* U, V_h) }{\tn V_h \tn_h}
    \\
    &= 
    \dfrac{A_h(U -  \mcC_h^* U, V_h) - S_h(\mcC_h^* U, V_h) }{\tn V_h \tn_h}
    = A + S.
    \label{}
  \end{align}
  Recalling the bound for the consistency
  error~\eqref{eq:consistency-error}, it suffices to estimate
  \begin{align}
    \nonumber
    A &= 
    \dfrac{1}{2 \eta} (\bfsigma - \mcC_h^* \bfsigma, \bftau_h)_{\Omega}
    + 2 \eta \gamma_b \bscp{h^{-1} (\bfu -  \mcC_h^* \bfu),
    \bfv_h}_{\Gamma}
    + a_h(\bfsigma - \mcC_h^* \bfsigma, \bfv_h)
    - a_h(\bftau_h, \bfu - \mcC_h^* \bfu) 
    \\
    \nonumber
    &\quad
    + b_h(p - \mcC_h^* p, \bfv_h)
    - b_h(q_h, \bfu - \mcC_h^* \bfu). 
  \end{align}
  For the first term, we simply have 
  \begin{align*}
   |\dfrac{1}{2 \eta} (\bfsigma - \mcC_h^* \bfsigma,
   \bftau_h)_{\Omega}|
   &\lesssim
   \dfrac{h}{\eta^{1/2}} \| \bfsigma \|_{1,\Omega} \tn V_h \tn_h,
  \end{align*}
  while for the second term, combining
  the trace inequality~\eqref{eq:trace-inequality}
  with the interpolation estimate~\eqref{eq:interpest0-ast}
  yields
  \begin{align*}
    2 \eta \gamma_b \bscp{h^{-1} (\bfu -  \mcC_h^* \bfu),
    \bfv_h}_{\Gamma}
    \lesssim (2 \eta \gamma_b)^{1/2} h \|\bfu \|_{2,\Omega} \tn V_h
    \tn_h.
  \end{align*}
  Next, the third term can be estimated by
  \begin{align*}
   |a_h(\bfsigma - \mcC_h^* \bfsigma, \bfv_h)|
   &\lesssim
   \dfrac{1}{\eta^{1/2}} 
   ( \| \bfsigma - \mcC_h^* \bfsigma \|_{\Omega}
   + \|h^{1/2}(\bfsigma  - \mcC_h^* \bfsigma)\|_{\Gamma}
   )
   \cdot \eta^{1/2}
   (\| \bfeps(\bfv_h) \|_{\Omega}
    + \| h^{-1/2} \bfv_h \|_{\Gamma}
    )
    \\
    &\lesssim
    \dfrac{h}{\eta^{1/2}} \| \bfsigma \|_{1,\Omega}
    \tn V_h \tn_h.
  \end{align*}
  Similarly, the fourth term can be bounded
  \begin{align}
    | a_h(\bftau_h, \bfu - \mcC_h^* \bfu ) |
    & \lesssim
    \dfrac{1}{\eta^{1/2}}
    (
    \| \bftau_h \|_{\Omega} + \| h^{1/2} \bftau_h \|_{\Gamma}
    )
    \; \eta^{1/2}
    (
    \| \bfeps(\bfu) - \mcC_h^*\bfu \|_{\Omega}
    + \| h^{-1/2}(\bfu - \mcC_h^*\bfu) \|_{\Gamma}
    )
    \nonumber
    \\
    &\lesssim
    h \tn V_h \tn_h \eta^{1/2} \| \bfu \|_{2,\Omega}. 
    \label{eq:apriori-step-3}
  \end{align}
  Here, we estimated the boundary term in~\eqref{eq:apriori-step-3}
  by successively applying
  the trace inequality~\eqref{eq:trace-inequality}, standard
  interpolation estimates and the boundedness of the extension
  operator $E: H^2(\Omega) \to H^2(\Oast)$,
  cf.~\eqref{eq:extension-operator-boundedness}, which yields
  \begin{align*}
    \| h^{-1/2}(\bfu - \mcC_h^*\bfu) \|_{\Gamma}
    \lesssim
    h^{-1} \| \bfu^{\ast} - \mcC_h^*\bfu \|_{\Oast}
    + 
    h \| \bfu^{\ast} - \mcC_h^* \bfu \|_{1,\Oast}
    \lesssim h \| \bfu \|_{2,\Omega}.
  \end{align*}
  The estimates for the remaining terms involving $b_h(\cdot,\cdot)$ are
  completely analogous, which concludes the proof.
\end{proof}
\begin{remark}
To reduce the system matrix stencil, one may use the
element-based penalty terms
\begin{align}
s_p(p_h,q_h) &= \frac{\gamma_p}{2 \eta} \sum_{T \in \mesh} h^2(\nabla p_h, \nabla q_h)_{T}, \\ 
  s_{\sigma}(\sigma_h,\tau_h) &= \frac{\gamma_{\sigma}}{2 \eta} \sum_{T \in \mesh} h^2(\nabla \sigma_h, \nabla \tau_h)_{T} 
\end{align}
for the pressure and stress, instead of the face-based penalty
terms~\eqref{eq:stabilization-operators},
\eqref{eq:stabilization-operator-p} over gradient jumps, if linear
finite element spaces are chosen for velocity, pressure and stress. 
Note that both the face and element-based penalty terms are weakly
consistent for $\bfsigma \in [H^1(\Omega)]^{d \times d}$ and $p \in
H^1(\Omega)$.
However, the face-based penalty term~\eqref{eq:stabilization-operator-u}
for $u \in [H^2(\Omega)]^d$ is strongly consistent and thus
strictly necessary as the analogous element-based penalty
term leads to a consistency error which deteriorates the overall
convergence order.
\end{remark}

\section{Numerical results}
\label{sec: numerical results}
In this section, we will demonstrate that the theoretical estimates of
Section~\ref{sec:stability} and Section~\ref{sec:apriori-analysis}
hold. In particular, we will show that the finite element solution of
velocity, pressure and extra-stress tensor converge with optimal
order to a $\sin$-$\cos$ reference solution of the three field Stokes system
and we will demonstrate that the ghost penalties yield independence of
the quality of the solution on the boundary location. All
numerical simulations have been performed using our software package
libCutFEM which will be made available soon at \url{http://www.cutfem.org}.
LibCutFEM is an open source library 
which extends the finite element library DOLFIN~\cite{LoggWells2010a}
and the FEniCS framework~\cite{LoggMardalEtAl2011} for automated
computing of finite element variational problems with cut finite
element capabilities.  The inner workings of libCutFEM are described
as part of the review article \cite{BurmanClausHansboEtAl2014}.

\subsection{Convergence study for reference solution}
To evaluate the accuracy of our scheme, we investigate the rate of convergence of the numerical solution to the following reference solution
\begin{align}
\bfu_{ex} = \left[\begin{matrix} -\sin(\pi y)\cos(\pi x) \\ \sin(\pi x)\cos(\pi y) \end{matrix}\right], \nonumber \\
p_{ex} = -2\eta\cos(\pi x)\sin(\pi y), \nonumber \\
\bfsigma_{ex} = \left[\begin{matrix}2.0 \pi \eta \sin{\left (\pi x \right )} \sin{\left (\pi y \right )} & 0\\0 & - 2.0 \pi \eta \sin{\left (\pi x \right )} \sin{\left (\pi y \right )}\end{matrix}\right], \nonumber \\
\bff = \left[\begin{matrix}2 \pi \eta \sin{\left (\pi x \right )} \sin{\left (\pi y \right )} - 2 \pi^{2} \eta \sin{\left (\pi y \right )} \cos{\left (\pi x \right )}\\2 \pi^{2} \eta \sin{\left (\pi x \right )} \cos{\left (\pi y \right )} - 2 \pi \eta \cos{\left (\pi x \right )} \cos{\left (\pi y \right )}\end{matrix}\right] 
\label{equ: 2d sincos solution}
\end{align}
of the three field Stokes system. Here, we choose $\gamma_u= 0.01$, $\gamma_p= 0.1$, $\gamma_{\bfsigma}=0.1$, $\gamma_b=15.0$ and $\eta= 0.5$ and compute the velocity, pressure and extra-stress in a unit circle embedded in a fixed background mesh. We set 
$
\bfu = \bfu_{ex} \mbox{ at } \partial \Omega.
$
For the velocity and the extra-stress tensor, the sum of the error of
the components is evaluated. \\
The rate of convergence for the
$L^2$-error, $|| U_h - U_{ex}||_0$, and for the $H^1$ error of $|| u_h - u_{ex}||_1$ are displayed in Figure~\ref{fig:
convergence rate 2d}. We obtain a convergence order of 1.05 for the
velocity in the $H^1$-norm, which is what we expect from our error analysis.  In the $L^2$ norm, the velocity converges with order 2.18. We obtain a convergence order of 1.77 and 1.99 for the extra-stress and
pressure which is better than expected.
However, this can be
explained by the smoothness of the solution.
\begin{figure}[htb!]
\centering
\subfloat[Convergence rate.]{\includegraphics[width=.5\textwidth]{./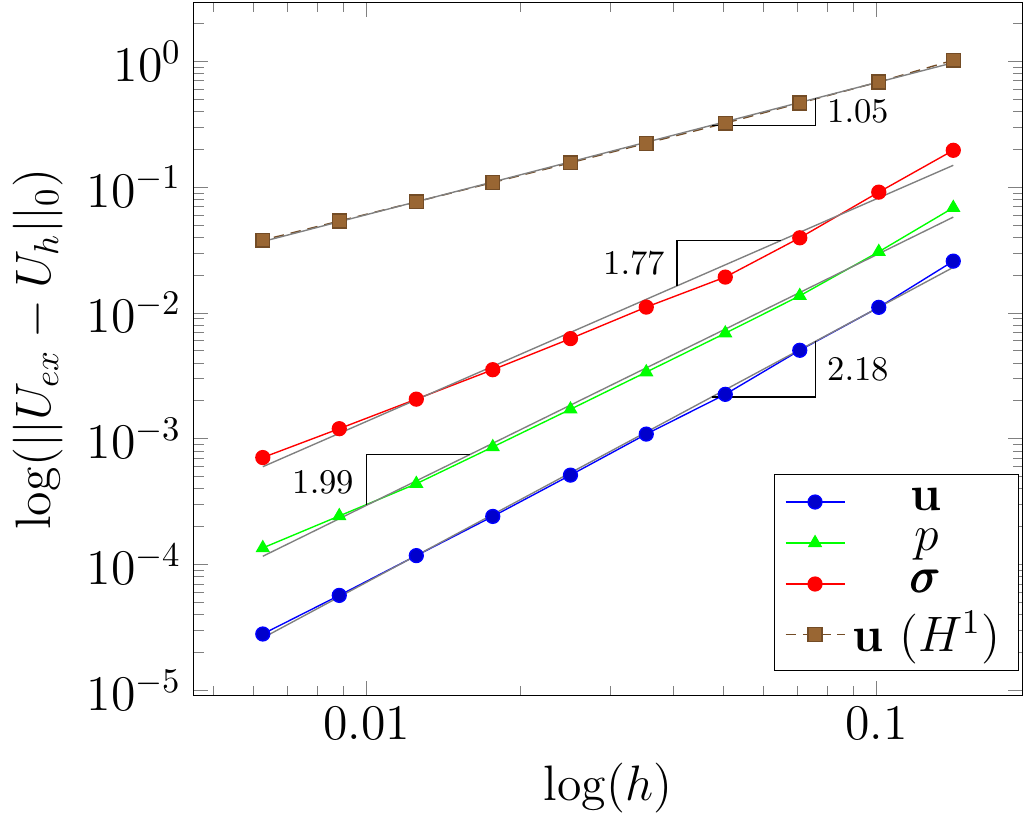}}  
\subfloat[Unit circle domain.]{\includegraphics[width=.5\textwidth]{./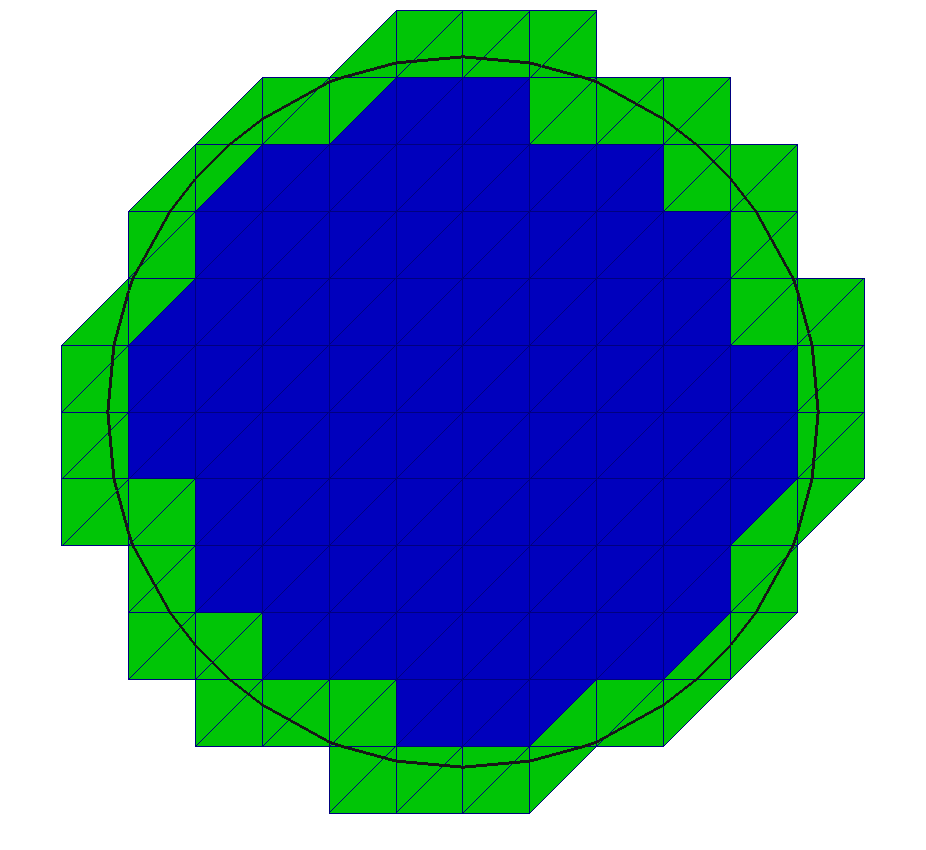}}  
\caption{Rate of convergence for two-dimensional sin-cos reference solution \eqref{equ: 2d sincos solution} in a unit circle domain.}
\label{fig: convergence rate 2d}
\end{figure}
\subsection{Stabilizing effect of ghost penalty terms}
In this section, we investigate the quality of the solution with respect to how the interface cuts the mesh. A boundary location in which only very small parts of the elements in the interface zone are covered by the physical domain can lead to an ill-conditioned system matrix and to an unbounded Nitsche boundary penalty parameter $\gamma_b$. To demonstrate that this ill-conditioning and the unboundedness of the penalty parameter can be alleviated using ghost penalties, we investigate the quality of the solution in terms of a boundary location parameter $0<\epsilon<1$. This parameter $\epsilon$ indicates the relative height of thin fluid stripes in a boundary cut parallel to an element edge, see Figure~\ref{subfig: sliverness def}. We call this type of cut configuration, the sliver case and $\epsilon$ indicates the sliver size. 

\subsubsection{Dependence of the quality of the solution on the sliver size}
Consider the reference solution \eqref{equ: 2d sincos solution} in a square domain $\Omega=[-1,1]^2$ embedded in a dilated background mesh of size 
\begin{equation}
\Oast=[-1-l,1+l] \mbox{ with } l=\frac{2(1-\epsilon)} {N-2(1-\epsilon)},
\end{equation}
where $N$ is the number of elements in the $x$ and in the $y$
direction. Figure~\ref{subfig: sliver test mesh} shows the
approximated interface location of the quadratic domain in dilated
background meshes for $\epsilon=0.5$ and $\epsilon=0.1$. We
investigate the effect of the ghost penalty parameter
$\gamma_{\bfsigma}$ on the quality of the sin-cos reference solution
for $\epsilon=\left\{0.5,0.1,0.02,0.004\right\}$. Throughout this
section, we set $\gamma_b=15.0$, $\gamma_{\bfu}=0.1$ and
$\gamma_p=0.1$. Figure~\ref{subfig: conv sliver} shows that for
$\epsilon=0.02$ and $\gamma_{\sigma}=0.1$, the extra-stress, the
velocity and the pressure converge with the optimal order of
convergence as predicted by the analysis in Section
\ref{sec:apriori-analysis}. Setting the ghost penalty parameter to
$\gamma_{\bfsigma}=0.0$ for $\epsilon=0.02$ causes an upward shift of
the error for the extra-stress tensor as shown in Figure~\ref{subfig:
conv sliver2}. Figure~\ref{subfig: stress wo gammas} shows this
increase of the error in the extra-stress tensor with decreasing
sliver size $\epsilon$ for the unstabilized extra-stress tensor
variable. Using the ghost penalty stabilization
($\gamma_{\bfsigma}=0.1$), this increase in error can be alleviated
and the solution becomes independent of the boundary location (see
Figure~\ref{subfig: stress w gammas}).  The cause for the error
increase for the unstabilized extra-stress variable can be observed in
Figure~\ref{fig: gammas ghost stab} for the extra-stress tensor
component $\sigma_{xx}$. Without the ghost penalty stabilization, we
have huge spikes appearing at the corner of the domain in the solution
and the solution shows large oscillations along the boundary. Even
though these spikes and oscillations decrease with mesh refinement the
solution of the extra-stress tensor component is polluted by the poor
solution in the boundary region. Setting $\gamma_{\bfsigma}=0.1$
alleviates this problem and the solution does not undergo any large
spikes or oscillations in the boundary region.  

\begin{figure}[htb]
\subfloat[ \label{subfig: sliverness def}]{\includegraphics[width=.3\textwidth]{./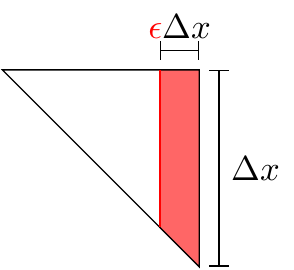}}
\subfloat[\label{subfig: sliver test mesh} ]{\includegraphics[width=.7\textwidth]{./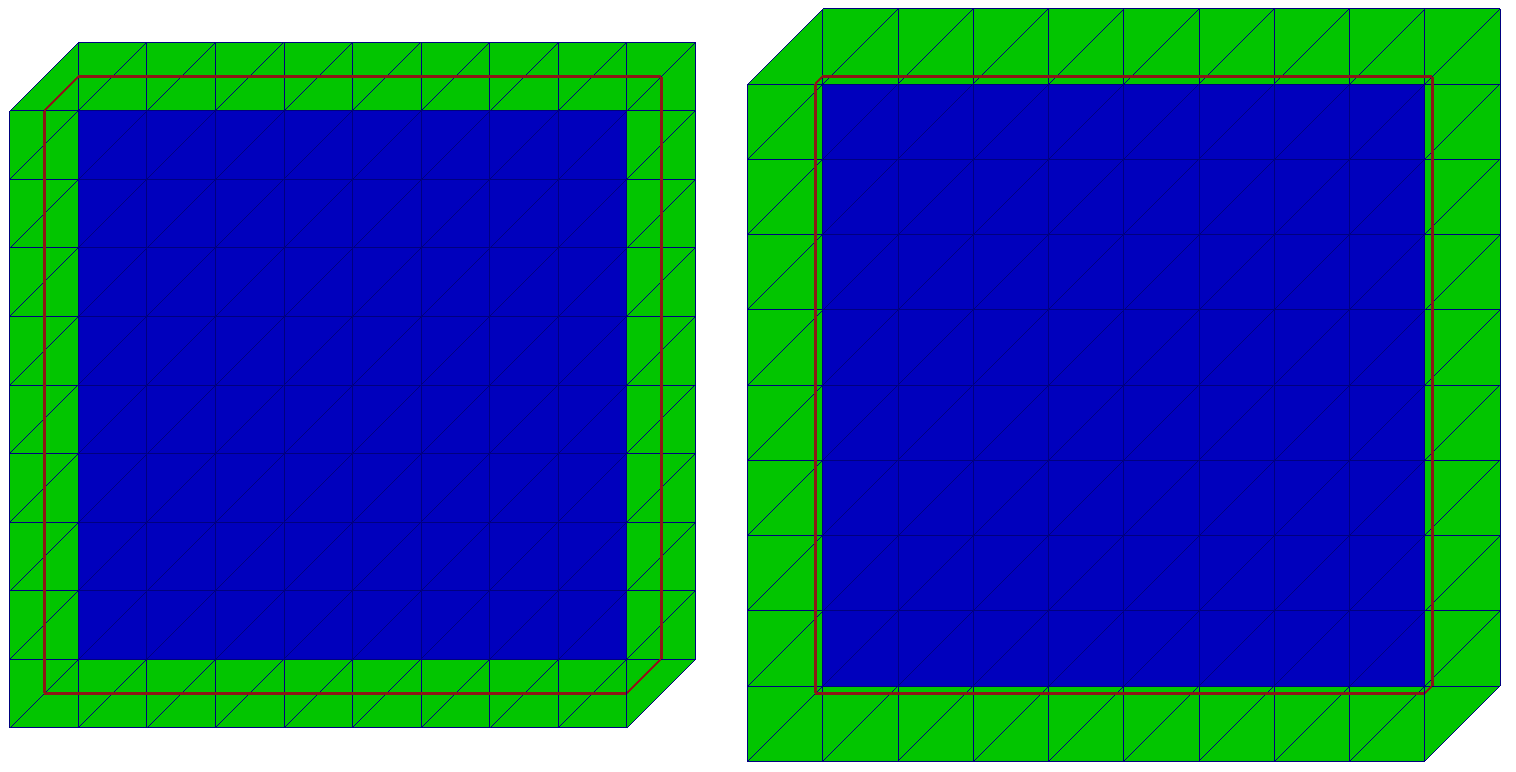}}
\caption{Schematics of \protect\subref{subfig: sliverness def} the definition of the sliver size parameter $\epsilon$ and \protect\subref{subfig: sliver test mesh} the approximated interface location of   $\Omega=\left[-1,1\right]$ in dilated background meshes for $\epsilon=0.5$ and $\epsilon=0.1$.}
\label{fig: sliver test schematics}
\end{figure}

\begin{figure}[htb]
\centering
\subfloat[ \label{subfig: conv sliver}$\gamma_{\bfsigma}=0.1$, $\epsilon=0.02$.]{\includegraphics[width=.45\textwidth]{./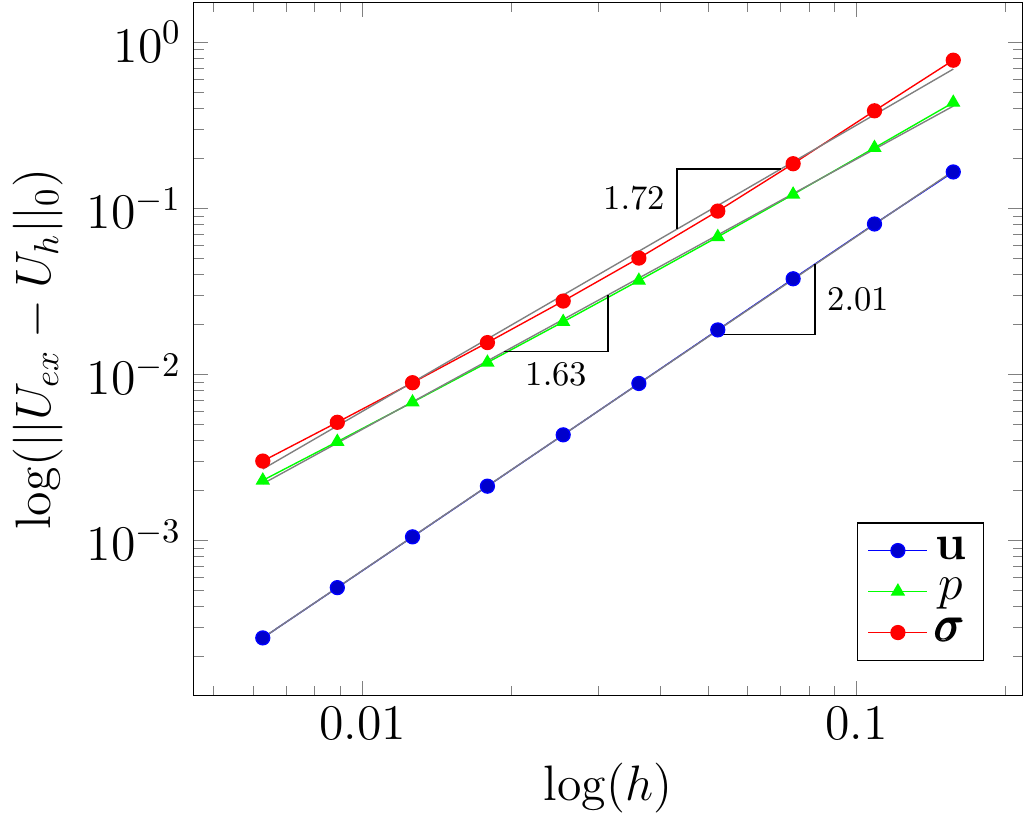}} 
\subfloat[ \label{subfig: conv sliver2}$\gamma_{\bfsigma}=0.0$, $\epsilon=0.02$.]{\includegraphics[width=.45\textwidth]{./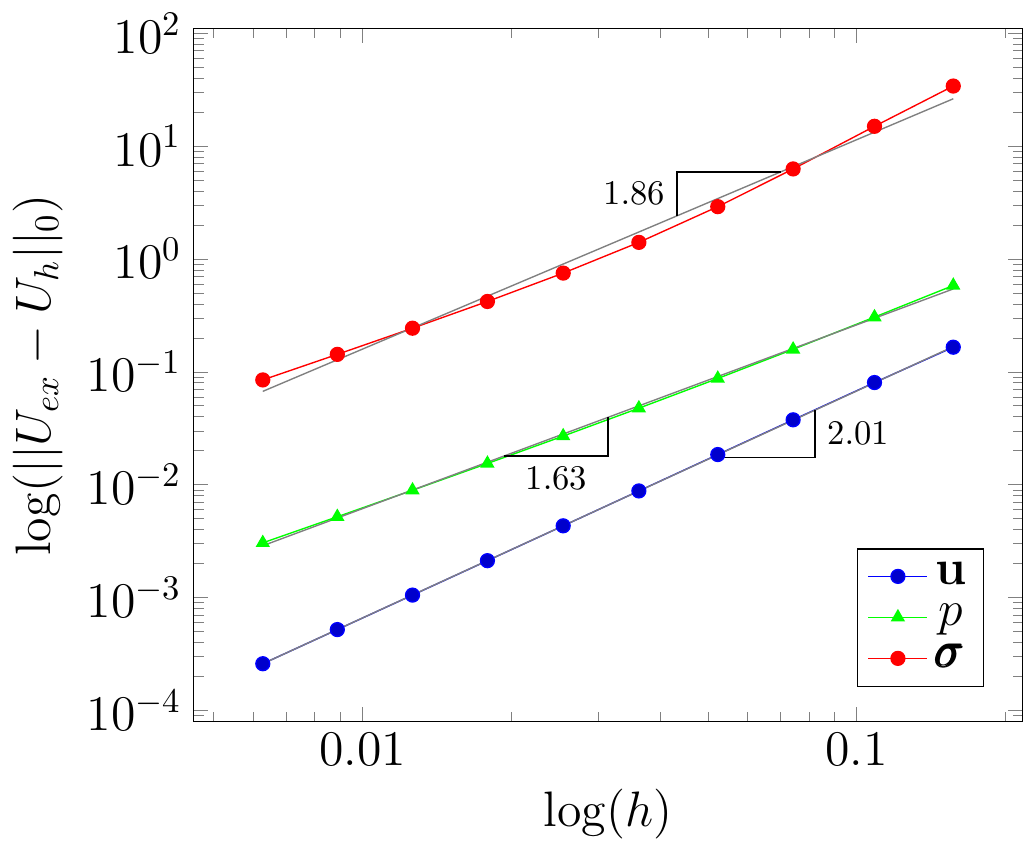}} \\
\subfloat[\label{subfig: stress w gammas} $\gamma_{\bfsigma}=0.1$.]{\includegraphics[width=.45\textwidth]{./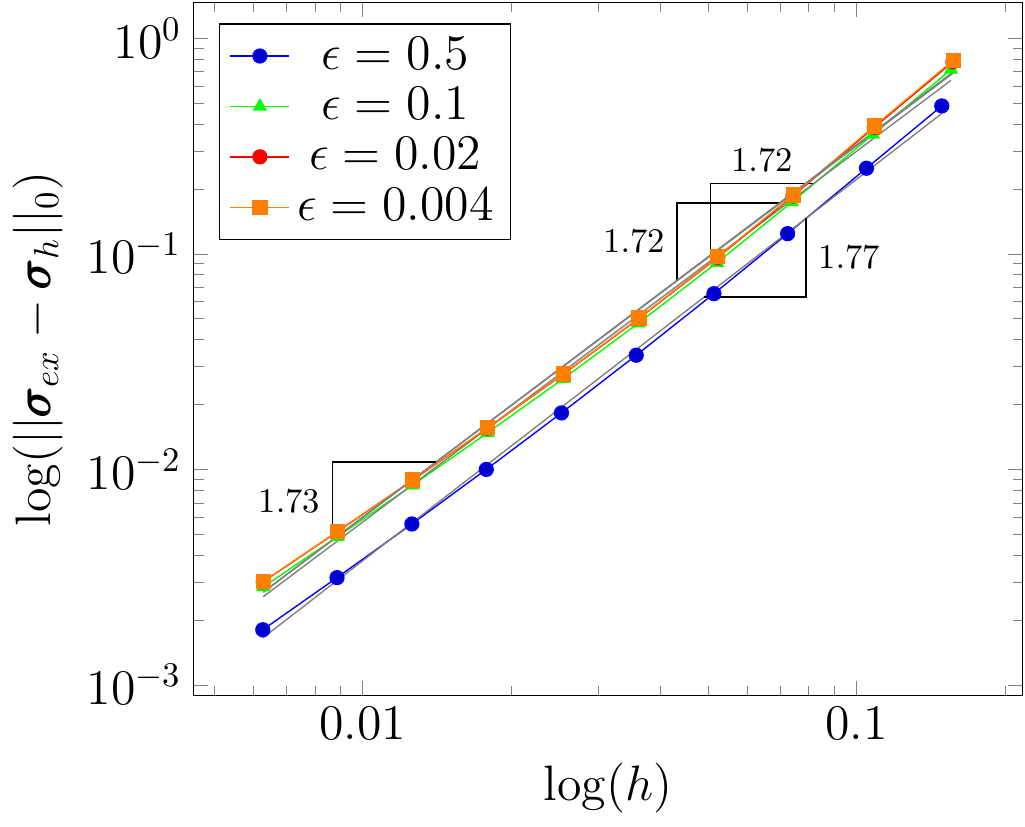}}
\subfloat[\label{subfig: stress wo gammas} $\gamma_{\bfsigma}=0.0$.]{\includegraphics[width=.45\textwidth]{./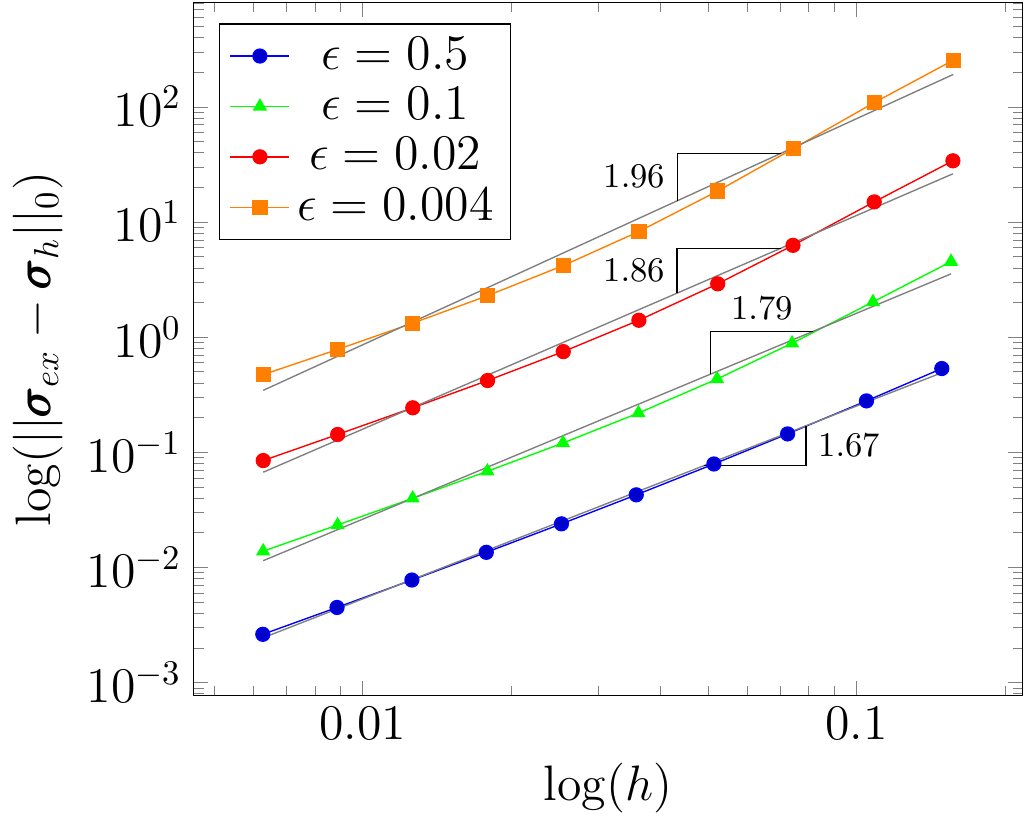}}
\caption{Convergence rates \protect\subref{subfig: conv sliver}, \protect\subref{subfig: stress w gammas} with extra-stress ghost penalty stabilization and  \protect\subref{subfig: conv sliver2},  \protect\subref{subfig: stress wo gammas} without extra-stress ghost penalty stabilization for $\gamma_b=15.0$, $\gamma_{\bfu}=0.1$ and $\gamma_p=0.1$.}
\label{fig: gammas ghost stab}
\end{figure}
\begin{figure}[htb]
\subfloat[ \label{subfig: sxx wo stab1}$\gamma_{\bfsigma}=0.0$, $\epsilon=0.02$, $\Delta x=0.077$.]{\includegraphics[width=.5\textwidth]{./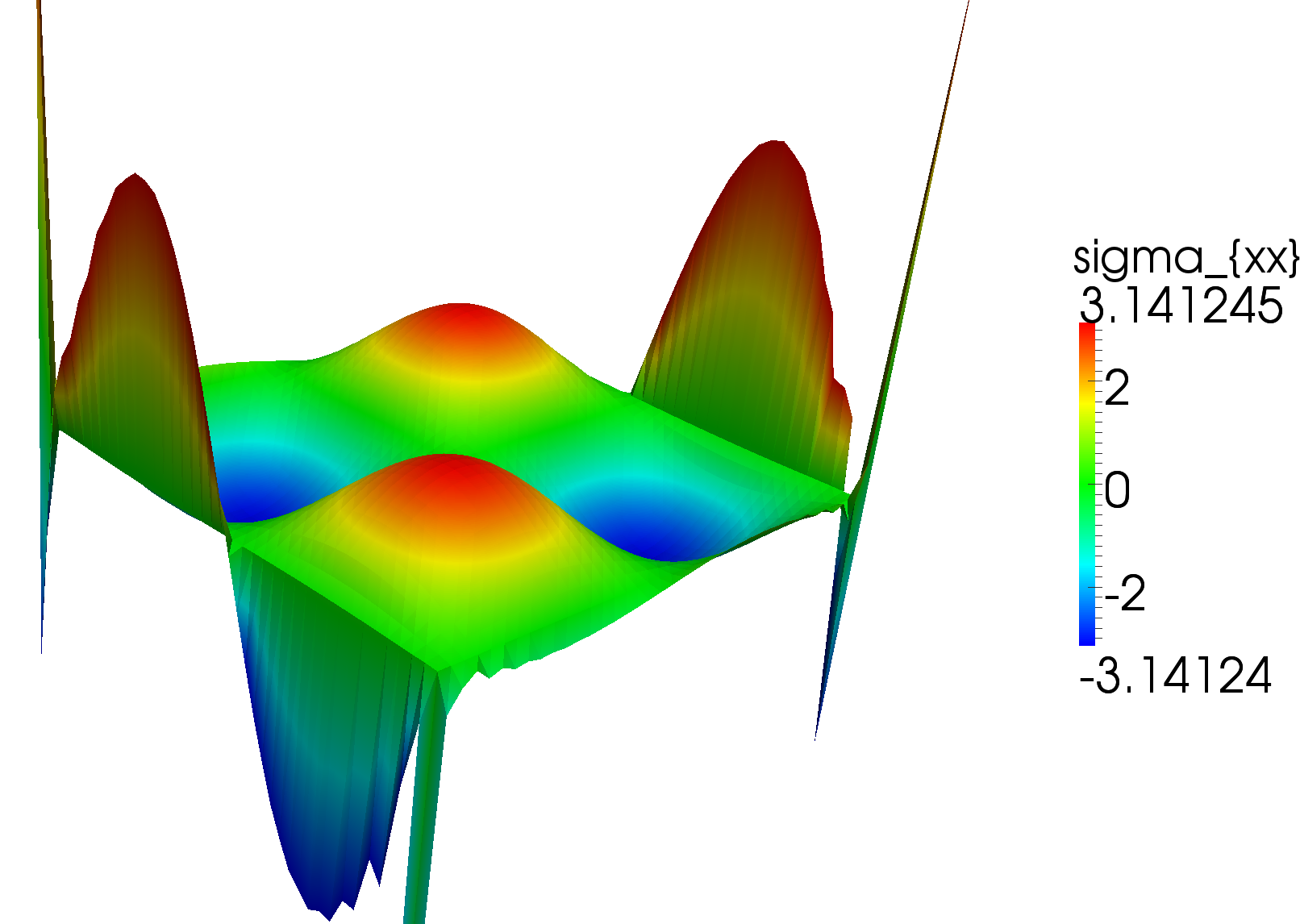}} 
\subfloat[ \label{subfig: sxx wo stab2} $\gamma_{\bfsigma}=0.0$, $\epsilon=0.02$, $\Delta x=0.018$. ]{\includegraphics[width=.5\textwidth]{./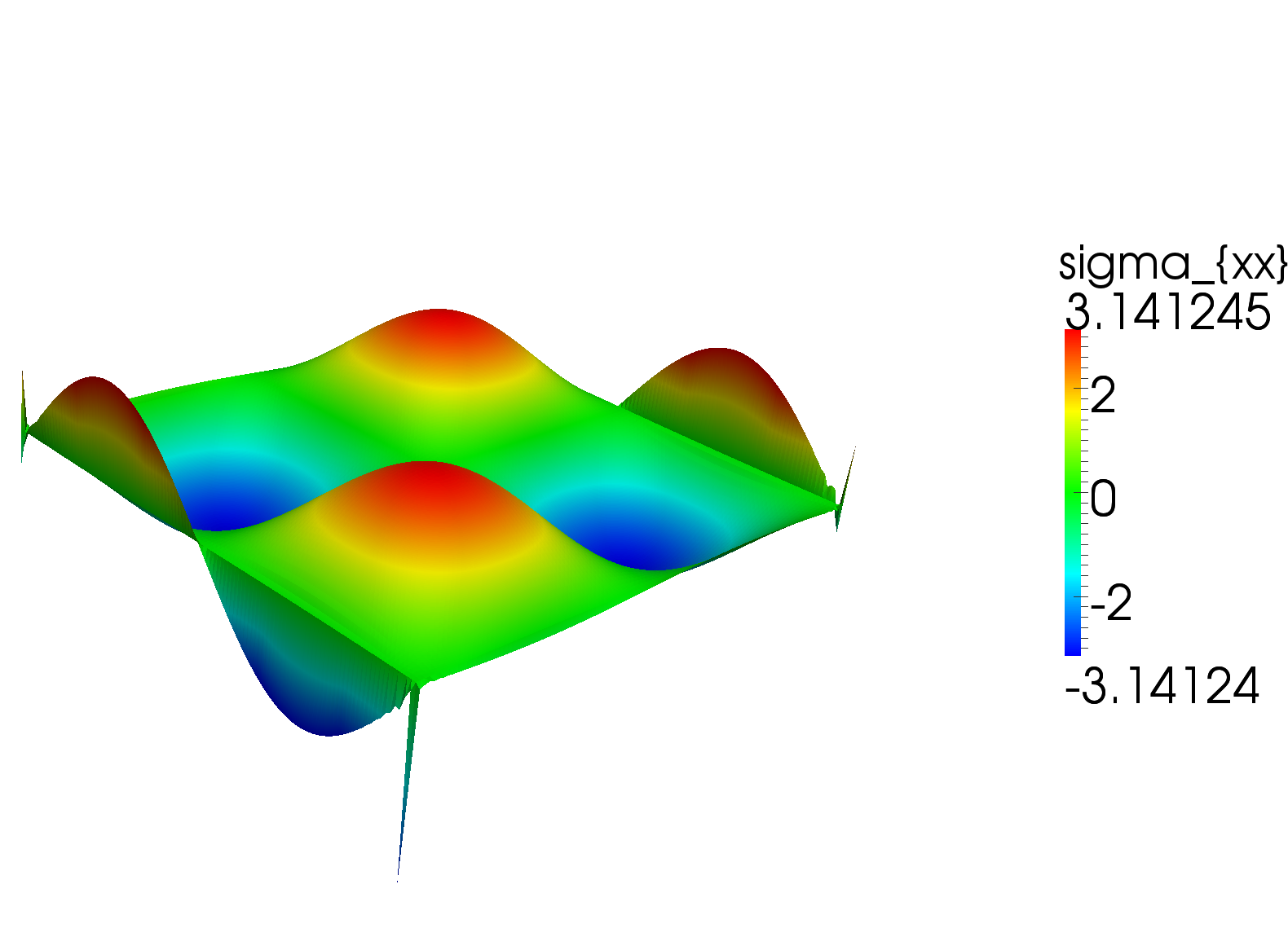}} \\
\subfloat[ \label{subfig: sxx w stab1}$\gamma_{\bfsigma}=0.1$, $\epsilon=0.02$, , $\Delta x=0.077$.]{\includegraphics[width=.5\textwidth]{./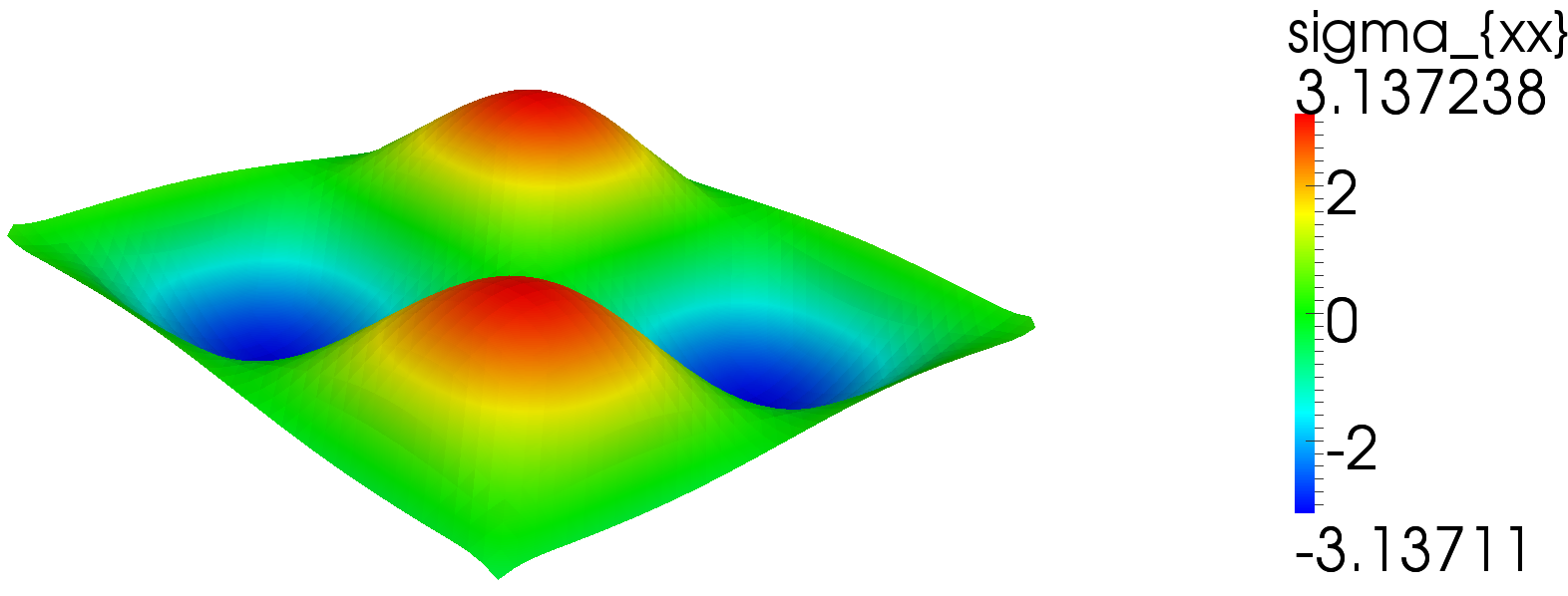}} 
\subfloat[ \label{subfig: sxx w stab2}$\gamma_{\bfsigma}=0.1$, $\epsilon=0.02$, $\Delta x=0.018$.]{\includegraphics[width=.5\textwidth]{./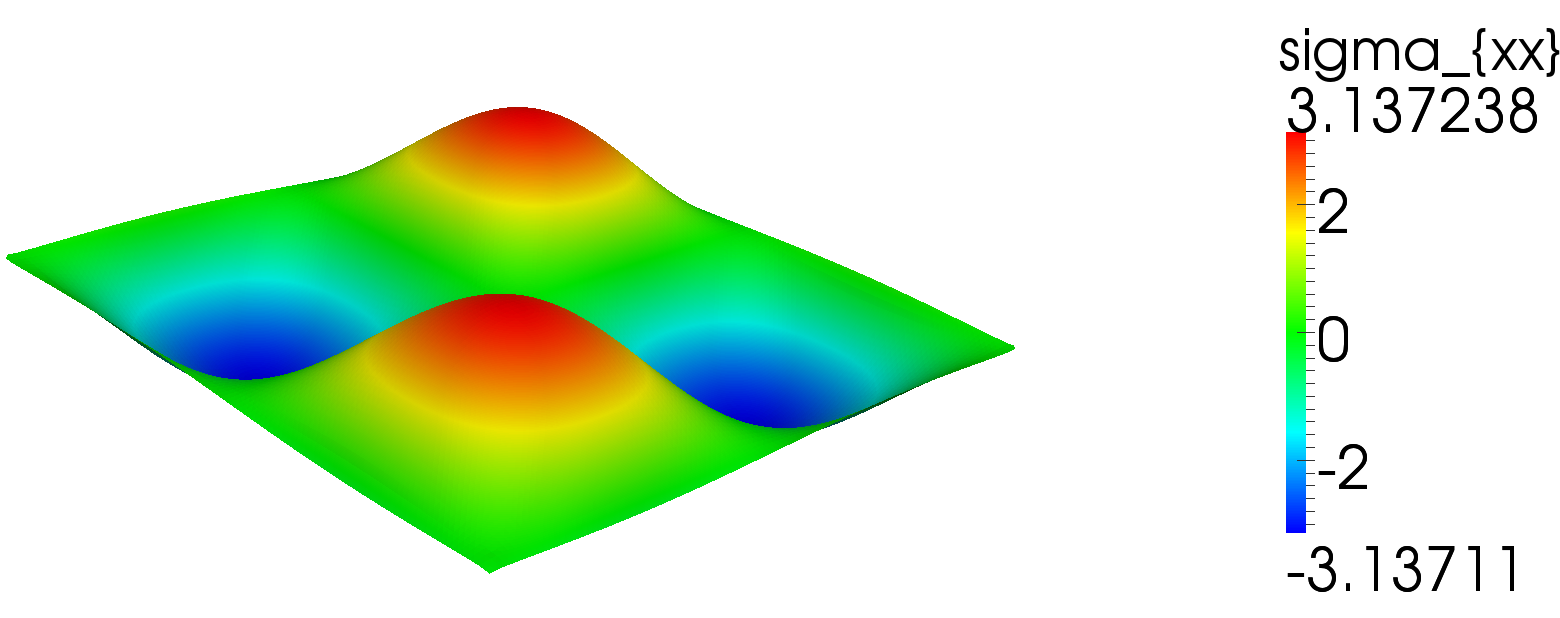}} 
\caption{Contour plot of $\sigma_{xx}$ component for \protect\subref{subfig: sxx wo stab1}, \protect\subref{subfig: sxx wo stab2} $\gamma_{\bfsigma}=0.0$ and \protect\subref{subfig: sxx w stab1}, \protect\subref{subfig: sxx w stab2}  $\gamma_{\bfsigma}=0.1$.}
\label{fig:contour-plots}
\end{figure}
\subsubsection{Condition number}
In this section, we investigate the condition number of the system
matrix $A$ ~\eqref{eq:A_h-definition} in dependence to the boundary
location for a ghost stabilization parameter of
$\gamma_{\bfsigma}=\left\{ 0.0, 0.001,0.1,1.0\right\}$. Here, we
consider a fixed fictitious domain $\Oast = [-1,1]^2$ with a fixed
mesh size $h$ and a shrinking physical domain
$\Omega=[-1+(1-\epsilon)\Delta x, 1-(1-\epsilon)\Delta x]^2$, where
$\Delta x$ is the edge length of the elements in $x$-direction and
$y$-direction. We choose  $\gamma_b=15.0, \gamma_u = 0.1, \gamma_p =
0.1$. Figure~\ref{fig: condition number} shows the condition number
with respect to the sliver size parameter $\epsilon$. We observe that
for $\gamma_{\bfsigma}=0.0$, the condition number is unbounded while
for $\gamma_{\bfsigma}=\left\{ 0.001,0.1,1.0\right\}$ the condition
number is bounded. Hence, even for very small ghost penalty
stabilization parameters the ill-conditioning dependence on the
boundary location is alleviated.
\begin{figure}[htb]
  \subfloat{ \includegraphics[width=0.25\textwidth]{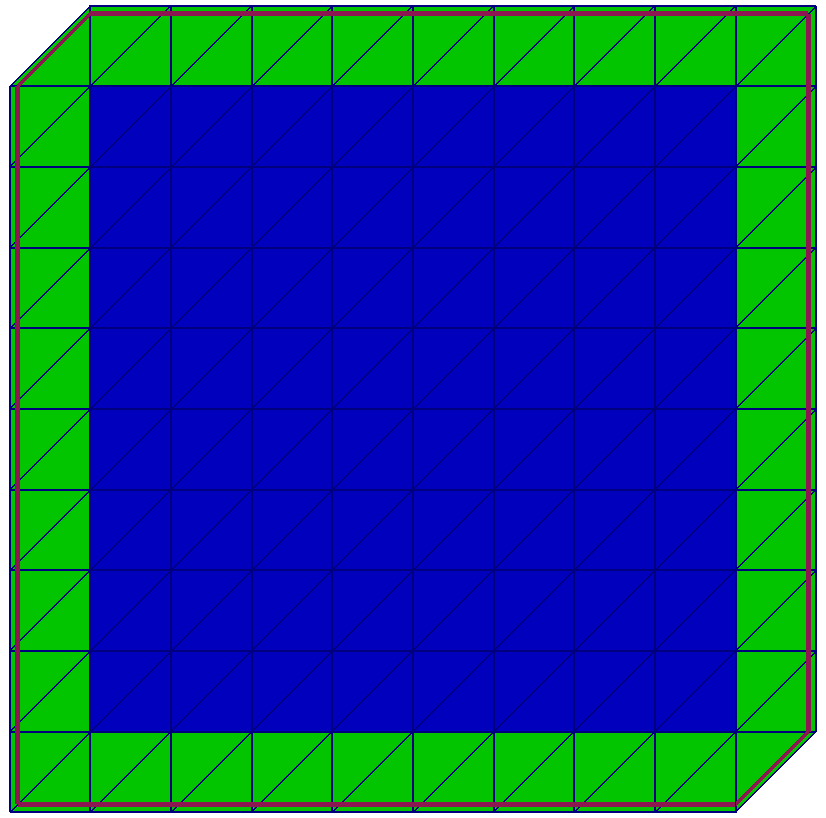}}
    \subfloat{    \includegraphics[width=0.25\textwidth]{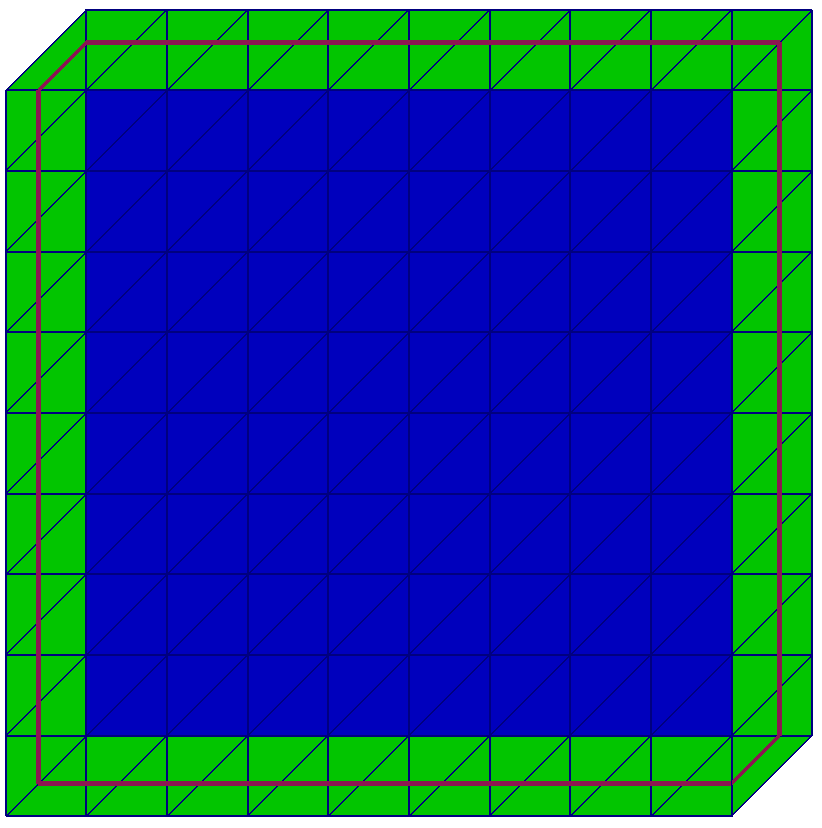}}
     \subfloat{   \includegraphics[width=0.25\textwidth]{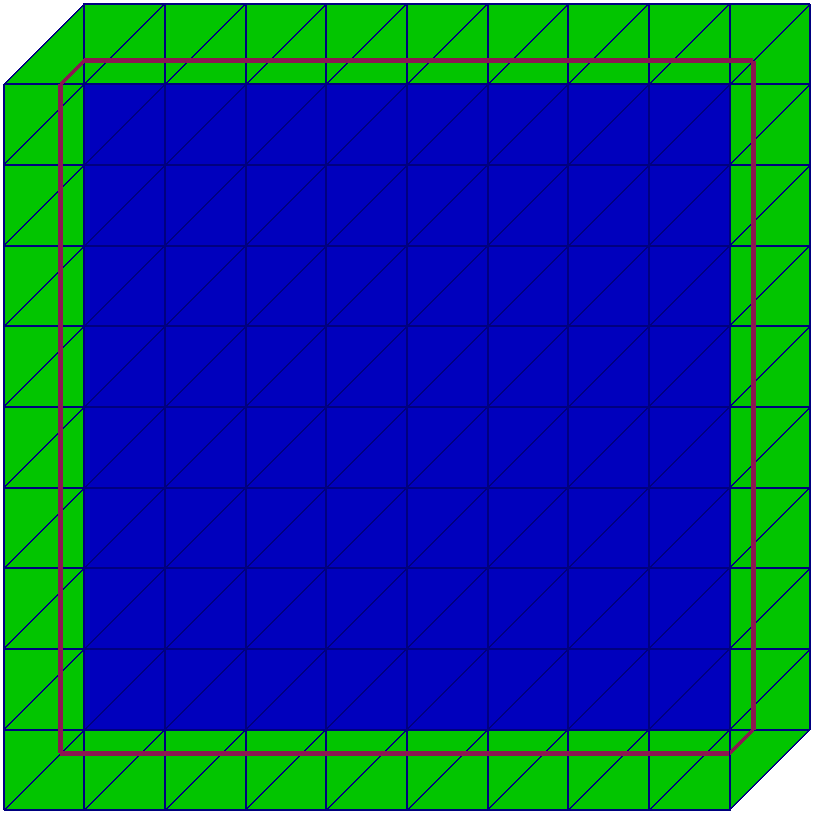}}
     \subfloat{   \includegraphics[width=0.25\textwidth]{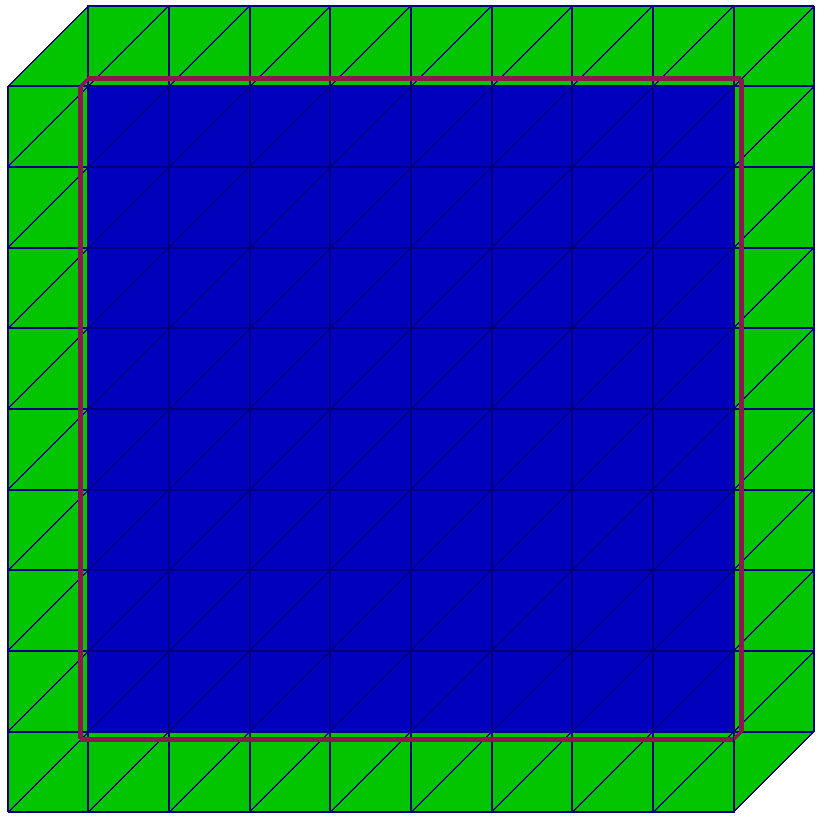}}\\
      \begin{center}
      \subfloat{   \includegraphics[width=.45\textwidth]{./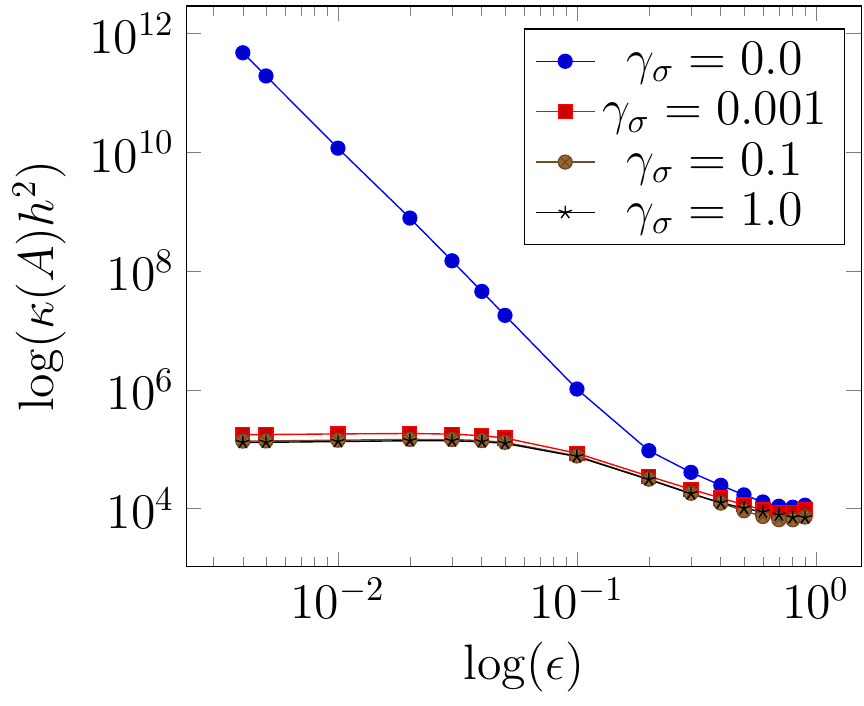}}
      \end{center}
\caption{Condition number $\kappa(A)$ for $\gamma_b=10.0, \gamma_u = 0.01, \gamma_p = 0.01$ and varying $\gamma_{\sigma}$ for a fixed fictitious domain $\Oast = [-1,1]^2$ with mesh size $h=0.2828$ ($\Delta x =0.2$) in terms of the sliver parameter $\epsilon$ for a shrinking physical domain $\Omega=[-1+(1-\epsilon)\Delta x, 1-(1-\epsilon)\Delta x]^2$.}  
\label{fig: condition number}
\end{figure}
\subsection{Three field Stokes in an aneurysm}
\label{p2:ssec:flow-in-complex-geometry}
As a final numerical example, we present
the computation of a fluid flow governed by the three field Stokes
problem in a three-dimensional domain with a complex
boundary geometry.
The boundary geometry is taken from a part of an arterial
network known as the Circle of Willis which is located close to the
human brain.  It is known that the network is prone to develop
aneurysms and therefore the computer-assisted study of the blood flow
in the Circle of Willis has been a recent subject of interest, see for
instance \citet{Steinman2003,IsaksenBazilevsKvamsdalEtAl2008,Valen-Sendstad2011}.\\
However, the purpose of this example is not to perform a realistic
study of the blood flow dynamics. Rather, we would like to demonstrate
the principal applicability of the developed method to simulation
scenarios where complex three-dimensional geometries are involved.
The blood vessel geometry is embedded in a structured background mesh
as illustrated in Figure~\ref{fig:aneurysm-box-mesh}.\\
The velocity is prescribed on the entire boundary $\Gamma$, 
where we set $\bfu = 0$ on the arterial walls and $\bfu = 1200\,
\mathrm{mm/s}$ on the inlet boundary. The two outflow velocities are
set such that the total flux is balanced. We choose $\eta= 1.0$,
$\gamma_{\bfu}= 0.1$, $\gamma_p= 0.1$, $\gamma_{\bfsigma}=0.1$ and $\gamma_b=10.0$.   \\
Figure~\ref{fig: aneurysm} displays the pressure, velocity and extra-stress profiles in the aneurysm geometry. The extra-stress tensor is displayed in terms of the van Mises stress measure \cite{Mises1913} given by 
\begin{equation}
\sigma_{v}^2 = \frac{1}{2} \left[ \left( \sigma_{xx} - \sigma_{yy}\right)^2 + \left( \sigma_{yy} - \sigma_{zz}\right)^2 + \left( \sigma_{zz} - \sigma_{xx}\right)^2  + 6 \left( \sigma_{xy}^2 + \sigma_{yz}^2 + \sigma_{xz}^2\right)\right].
\end{equation}
This stress measure provides an indication of the strength of normal stress differences and shear stresses in the fluid.
Although the fictitious domain mesh $\mesh$ provides only a coarse resolution of
the aneurysm geometry, the values of the velocity approximation
clearly conforms to the required boundary values on the actual surface
geometry.
\begin{figure}[htb]
\centering
  \subfloat[Aneurysm surface embedded in the structured background
  mesh. \label{fig:aneurysm-box-mesh}]{\includegraphics[width=.33\textwidth]{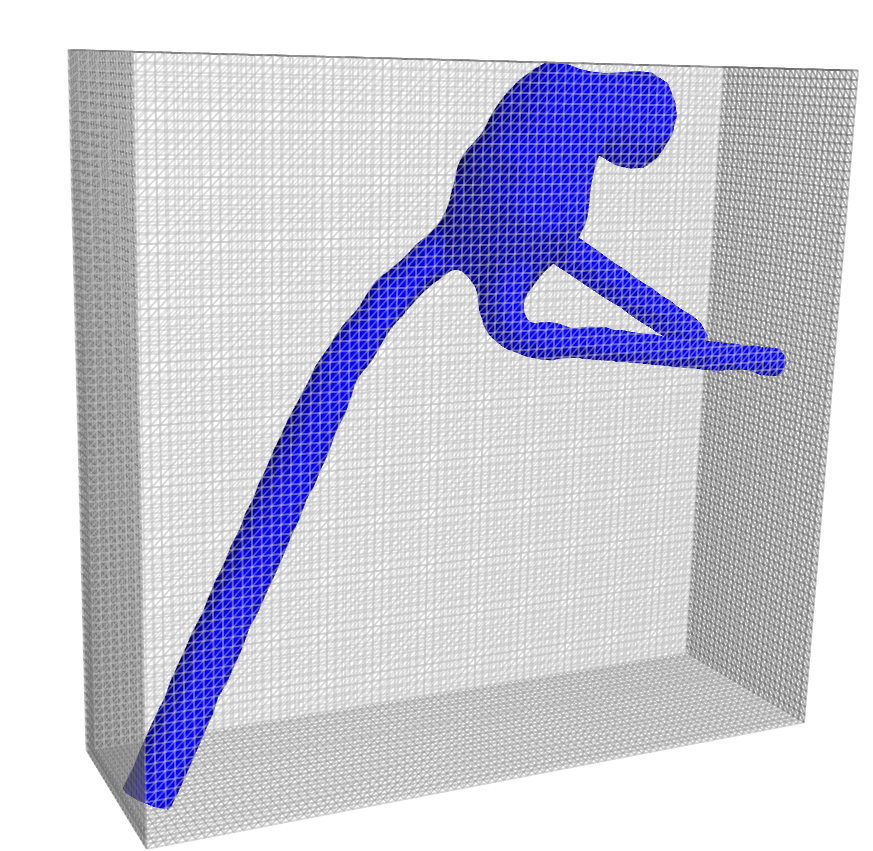}}   
  \subfloat[Pressure.]{\includegraphics[width=.33\textwidth]{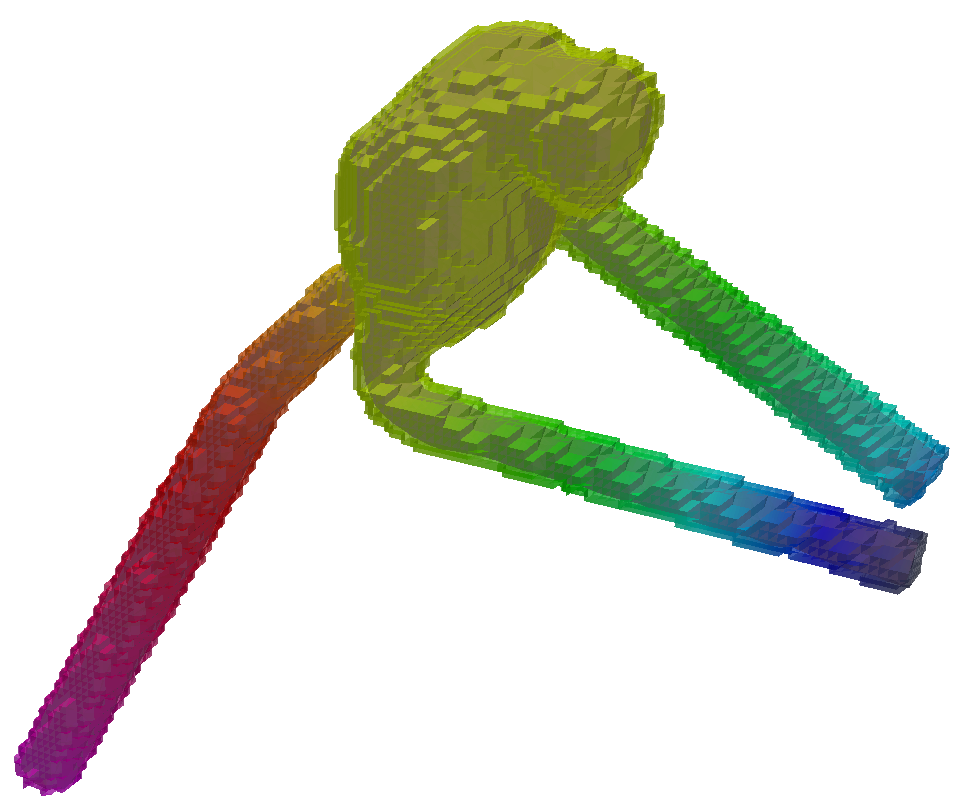}}   
  \subfloat{\includegraphics[width=.15\textwidth]{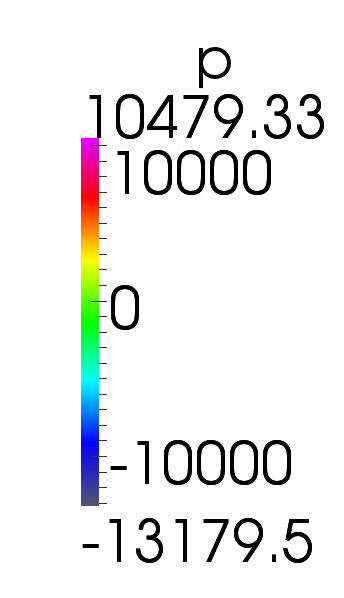}}  
  \\
  \subfloat[van Mises stress measure.]{\includegraphics[width=.33\textwidth]{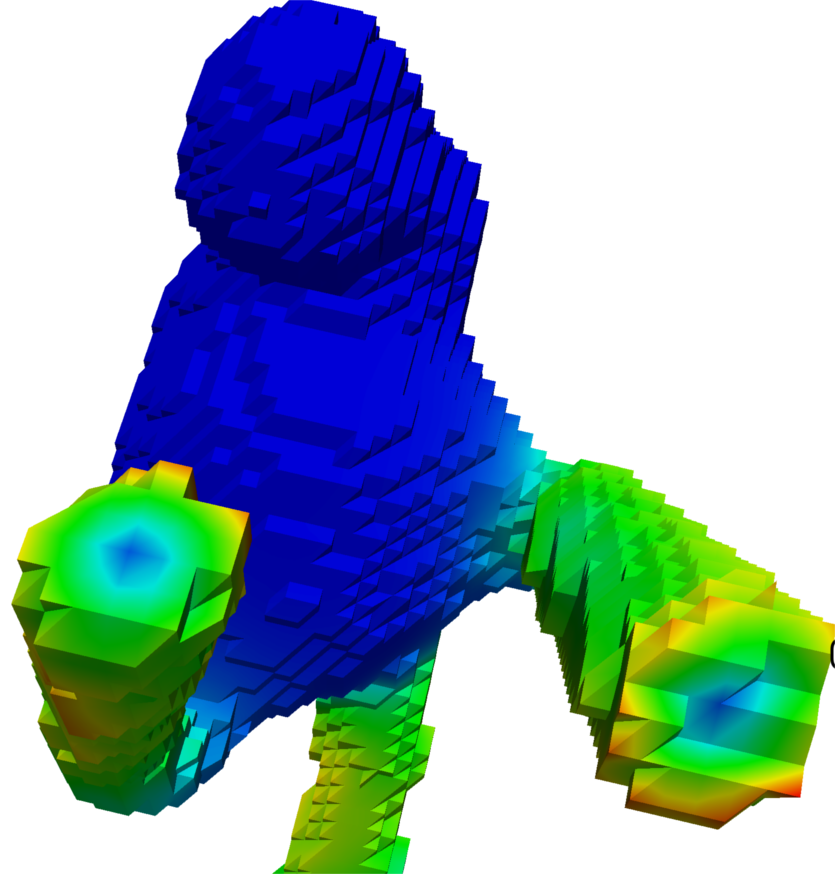}}     
  \subfloat{\includegraphics[width=.15\textwidth]{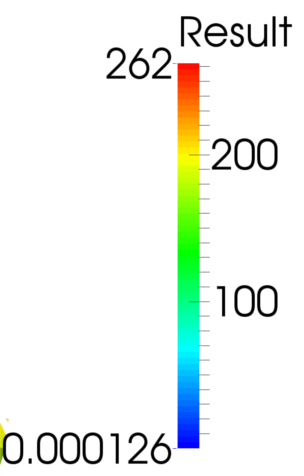}} 
  \subfloat[Velocity streamlines.]{\includegraphics[width=.4\textwidth]{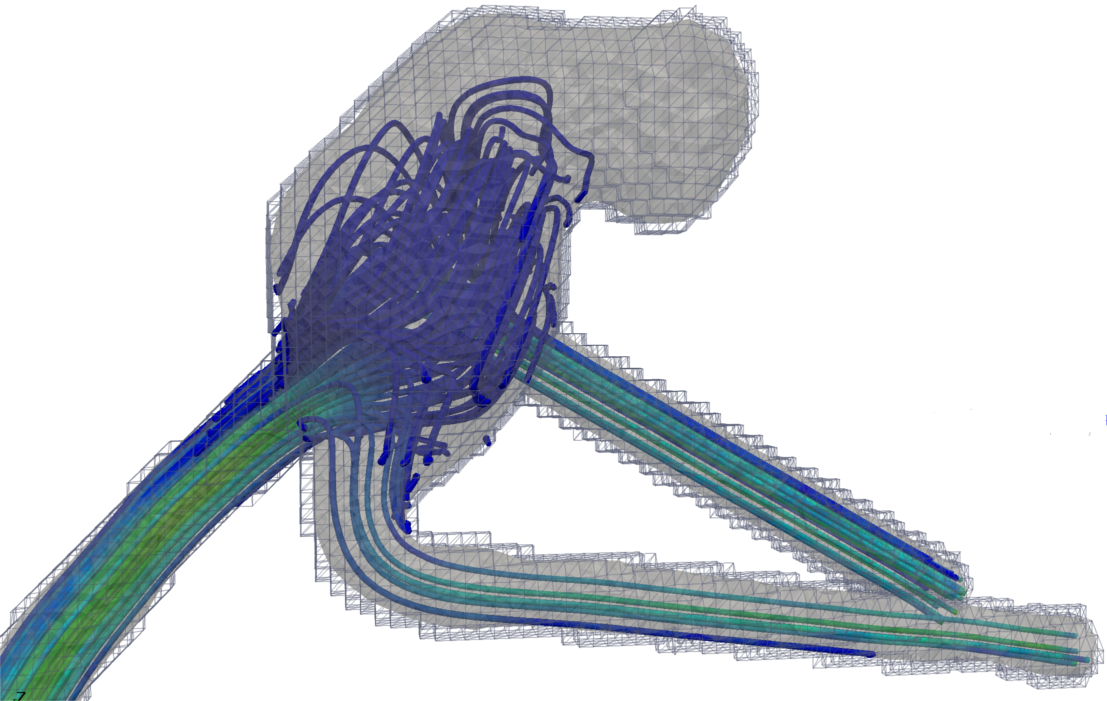}}   
  \subfloat{\includegraphics[width=.15\textwidth]{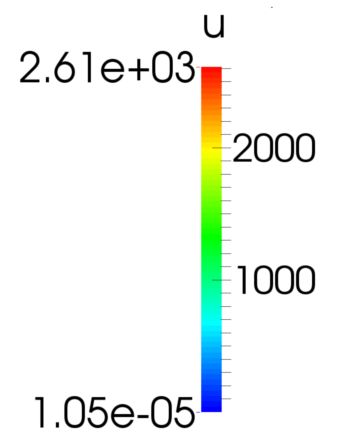}}  
\caption{Three field Stokes flow through an aneurysm.}
\label{fig: aneurysm}
\end{figure}

\section{Conclusions}
In this article, we have developed a novel fictitious domain method
for the three field Stokes equation. We have demonstrated
theoretically and numerically that our scheme is inf-sup stable and
possesses optimal convergence order properties independent of the
boundary location. We have approximated the velocity, pressure and
extra stress tensor with linear finite elements and we have stabilized
our scheme using a continuous interior penalty approach combined with
ghost penalty terms in the boundary region. We have demonstrated that
the ghost penalties in the boundary region guarantee a stable and
accurate solution independent of how the boundary intersects the mesh.
Additionally, we have demonstrated numerically that the ghost penalty
stabilization yields a bounded condition number independent of the
boundary location.  \\
In a future contribution, we will extend the scheme developed in this paper to multi-phase three field Stokes problems.
\clearpage

\section*{Acknowledgements}
The work for this article was supported by the EPSRC grant EP/J002313/2 on
"Computational methods for multiphysics interface problems" and a
Center of Excellence grant from the Research Council of Norway to the
Center for Biomedical Computing at Simula Research Laboratory.
The authors wish to thank Sebastian Warmbrunn for providing the
surface geometry used in
Section~\ref{p2:ssec:flow-in-complex-geometry}
and the anonymous referees for helpful comments.

\bibliographystyle{plainnat}

\begin{thebibliography}{31}
\providecommand{\natexlab}[1]{#1}
\providecommand{\url}[1]{\texttt{#1}}
\expandafter\ifx\csname urlstyle\endcsname\relax
  \providecommand{\doi}[1]{doi: #1}\else
  \providecommand{\doi}{doi: \begingroup \urlstyle{rm}\Url}\fi

\bibitem[Becker et~al.(2009)Becker, Burman, and Hansbo]{BeckerBurmanHansbo2009}
R.~Becker, E.~Burman, and P.~Hansbo.
\newblock {A Nitsche extended finite element method for incompressible
  elasticity with discontinuous modulus of elasticity}.
\newblock \emph{Comput. Methods Appl. Mech. Eng.}, 198\penalty0
  (41-44):\penalty0 3352--3360, 2009.

\bibitem[Bonito and Burman(2006)]{Bonito2006}
A.~Bonito and E.~Burman.
\newblock {A face penalty method for the three fields Stokes equation arising
  from Oldroyd-B viscoelastic flows}.
\newblock \emph{Numer. Math. Adv. Appl.}, 2:\penalty0 1--8, 2006.

\bibitem[Bonito and Burman(2008)]{Bonito2008}
A.~Bonito and E.~Burman.
\newblock {A Continuous Interior Penalty Method for Viscoelastic Flows}.
\newblock \emph{SIAM J. Sci. Comput.}, 30\penalty0 (3):\penalty0 1156--1177,
  2008.

\bibitem[Bonvin et~al.(2001)Bonvin, Picasso, and
  Stenberg]{BonvinPicassoStenberg2001}
J.~Bonvin, M.~Picasso, and R.~Stenberg.
\newblock {GLS and EVSS methods for a three-field Stokes problem arising from
  viscoelastic flows}.
\newblock \emph{Comput. Methods Appl. Mech. Eng.}, 190\penalty0 (29):\penalty0
  3893--3914, 2001.

\bibitem[Burman(2010)]{Burman2010}
E.~Burman.
\newblock {Ghost penalty}.
\newblock \emph{Comptes Rendus Mathematique}, 348\penalty0 (21-22):\penalty0
  1217--1220, 2010.

\bibitem[Burman and Hansbo(2006)]{BurmanHansbo2006a}
E.~Burman and P.~Hansbo.
\newblock {Edge stabilization for the generalized Stokes problem: A continuous
  interior penalty method}.
\newblock \emph{Comput. Methods Appl. Mech. Eng.}, 195\penalty0
  (19-22):\penalty0 2393--2410, 2006.

\bibitem[Burman and Hansbo(2010)]{BurmanHansbo2010}
E.~Burman and P.~Hansbo.
\newblock {Fictitious domain finite element methods using cut elements: I. A
  stabilized Lagrange multiplier method}.
\newblock \emph{Comput. Methods Appl. Mech. Eng.}, 199\penalty0 (41):\penalty0
  2680--2686, 2010.

\bibitem[Burman and Hansbo(2012)]{BurmanHansbo2012}
E.~Burman and P.~Hansbo.
\newblock {Fictitious domain finite element methods using cut elements: II. A
  stabilized Nitsche method}.
\newblock \emph{Appl. Numer. Math.}, 62\penalty0 (4):\penalty0 328--341, 2012.

\bibitem[Burman and Hansbo(2014)]{BurmanHansbo2013}
E.~Burman and P.~Hansbo.
\newblock Fictitious domain methods using cut elements: {III}. {A} stabilized
  {N}itsche method for {Stokes'} problem.
\newblock \emph{ESAIM, Math. Model. Num. Anal.}, 48\penalty0 (3):\penalty0
  859--874, 2014.

\bibitem[Burman et~al.(2006)Burman, Fern\'{a}ndez, and
  Hansbo]{BurmanFernandezHansbo2006}
E.~Burman, M.A. Fern\'{a}ndez, and P.~Hansbo.
\newblock {Continuous interior penalty finite element method for Oseen's
  equations}.
\newblock \emph{SIAM J. Numer. Anal.}, 44\penalty0 (3):\penalty0 1248--1274,
  2006.

\bibitem[Burman et~al.(2014{\natexlab{a}})Burman, Claus, Hansbo, Larson, and
  Massing]{BurmanClausHansboEtAl2014}
E.~Burman, S.~Claus, P.~Hansbo, M.G. Larson, and A.~Massing.
\newblock {CutFEM: discretizing geometry and partial differential equations}.
\newblock \emph{Int. J. Numer. Meth. Eng.}, 2014{\natexlab{a}}.

\bibitem[Burman et~al.(2014{\natexlab{b}})Burman, Hansbo, Larson, and
  Zahedi]{BurmanHansboLarsonEtAl2014}
E.~Burman, P.~Hansbo, M.G. Larson, and S.~Zahedi.
\newblock Cut finite element methods for coupled bulk-surface problems.
\newblock \emph{arXiv Prepr. arXiv:1403.6580}, 2014{\natexlab{b}}.

\bibitem[Di~Pietro and Ern(2011)]{DipietroErn2011}
D.~A. Di~Pietro and A.~Ern.
\newblock \emph{Mathematical aspects of discontinuous Galerkin methods}.
\newblock Springer, 2011.

\bibitem[Dolbow and Harari(2009)]{DoHa09}
J.~Dolbow and I.~Harari.
\newblock An efficient finite element method for embedded interface problems.
\newblock \emph{Int. J. Numer. Meth. Eng.}, 78\penalty0 (2):\penalty0 229--252,
  2009.

\bibitem[Donea et~al.(2004)Donea, Huerta, Ponthot, and
  Rodriguez-Ferran]{DoneaALE}
J.~Donea, A.~Huerta, J.-Ph. Ponthot, and A.~Rodriguez-Ferran.
\newblock \emph{{Arbitrary Lagrangian--Eulerian Methods}}, chapter~14.
\newblock John Wiley \& Sons Ltd., 2004.

\bibitem[Ern and Guermond(2004)]{ErnGuermond2004}
A.~Ern and J.-L. Guermond.
\newblock \emph{Theory and practice of finite elements}, volume 159 of
  \emph{Appl. Math. Sci.}
\newblock Springer, 2004.

\bibitem[Hansbo and Hansbo(2002)]{HansboHansbo2002}
A.~Hansbo and P.~Hansbo.
\newblock {An unfitted finite element method, based on Nitsche's method, for
  elliptic interface problems}.
\newblock \emph{Comput. Methods Appl. Mech. Eng.}, 191\penalty0
  (47-48):\penalty0 5537--5552, 2002.

\bibitem[Hansbo and Hansbo(2004)]{HaHa04}
A.~Hansbo and P.~Hansbo.
\newblock A finite element method for the simulation of strong and weak
  discontinuities in solid mechanics.
\newblock \emph{Comput. Methods Appl. Mech. Eng.}, 193\penalty0
  (33-35):\penalty0 3523--3540, 2004.

\bibitem[Hansbo et~al.(2003)Hansbo, Hansbo, and Larson]{HansboHansboLarson2003}
A.~Hansbo, P.~Hansbo, and M.~G. Larson.
\newblock {A Finite Element Method on Composite Grids based on Nitsche's
  Method}.
\newblock \emph{ESAIM, Math. Model. Num. Anal.}, 37\penalty0 (3):\penalty0
  495--514, 2003.

\bibitem[Harari and Dolbow(2010)]{HaDo10}
I.~Harari and J.~Dolbow.
\newblock Analysis of an efficient finite element method for embedded interface
  problems.
\newblock \emph{Comput. Mech.}, 46\penalty0 (1):\penalty0 205--211, 2010.

\bibitem[Isaksen et~al.(2008)Isaksen, Bazilevs, Kvamsdal, Zhang, Kaspersen,
  Waterloo, Romner, and Ingebrigtsen]{IsaksenBazilevsKvamsdalEtAl2008}
J.~G. Isaksen, Y.~Bazilevs, T.~Kvamsdal, Y.~Zhang, J.~H. Kaspersen,
  K.~Waterloo, B.~Romner, and T.~Ingebrigtsen.
\newblock {Determination of wall tension in cerebral artery aneurysms by
  numerical simulation}.
\newblock \emph{Stroke}, 39\penalty0 (12):\penalty0 3172, 2008.

\bibitem[Logg and Wells(2010)]{LoggWells2010a}
A.~Logg and G.~N. Wells.
\newblock {DOLFIN}: Automated finite element computing.
\newblock \emph{{ACM} Trans. Math. Softw.}, 37\penalty0 (2), 2010.

\bibitem[Logg et~al.(2012)Logg, Mardal, and et~al.]{LoggMardalEtAl2011}
A.~Logg, K.-A. Mardal, and Wells. G.~N. et~al.
\newblock \emph{Automated Solution of Differential Equations by the Finite
  Element Method}.
\newblock Springer, 2012.

\bibitem[Massing et~al.(2013)Massing, Larson, Logg, and
  Rognes]{MassingLarsonLoggEtAl2013a}
A.~Massing, M.~G. Larson, A.~Logg, and M.~E. Rognes.
\newblock {A stabilized Nitsche fictitious domain method for the Stokes
  problem}.
\newblock \emph{J. Sci. Comput.}, pages 1--25, 2013.

\bibitem[Massing et~al.(2014)Massing, Larson, Logg, and
  Rognes]{MassingLarsonLoggEtAl2013}
A.~Massing, M.G. Larson, A.~Logg, and M.E. Rognes.
\newblock {A stabilized Nitsche overlapping mesh method for the Stokes
  problem}.
\newblock \emph{Num. Math.}, pages 1--29, 2014.
\newblock \doi{10.1007/s00211-013-0603-z}.

\bibitem[Mises(1913)]{Mises1913}
R.~V. Mises.
\newblock {Mechanik der festen K\"{o}rper im plastisch--deformablen Zustand}.
\newblock \emph{{Nachrichten von der Gesellschaft der Wissenschaften zu
  G\"{o}ttingen, Mathematisch--Physikalische Klasse}}, pages 582--592, 1913.

\bibitem[Nitsche(1971)]{Nitsche1971}
J.~Nitsche.
\newblock {\"{U}ber ein Variationsprinzip zur L\"{o}sung von
  Dirichlet-Problemen bei Verwendung von Teilr\"{a}umen, die keinen
  Randbedingungen unterworfen sind}.
\newblock \emph{Abhandlungen aus dem Mathematischen Seminar der Universit\"{a}t
  Hamburg}, 36\penalty0 (1):\penalty0 9--15, July 1971.

\bibitem[Owens and Phillips(2002)]{Owens2002}
R.~G. Owens and T.~N. Phillips.
\newblock \emph{{Computational Rheology}}.
\newblock World Scientific, 2002.

\bibitem[Stein(1970)]{Stein1970}
E.~Stein.
\newblock \emph{{Singular Integrals and Differentiability Properties of
  Functions}}.
\newblock Princeton University Press, 1970.

\bibitem[Steinman et~al.(2003)Steinman, Milner, Norley, Lownie, and
  Holdsworth]{Steinman2003}
D.~A. Steinman, J.~S. Milner, C.~J. Norley, S.~P. Lownie, and D.~W. Holdsworth.
\newblock {Image-based computational simulation of flow dynamics in a giant
  intracranial aneurysm.}
\newblock \emph{AJNR. American journal of neuroradiology}, 24\penalty0
  (4):\penalty0 559--66, April 2003.

\bibitem[Valen-Sendstad et~al.(2011)Valen-Sendstad, Mardal, Mortensen, Reif,
  and Langtangen]{Valen-Sendstad2011}
K.~Valen-Sendstad, K.~Mardal, M.~Mortensen, B.~A.~P. Reif, and H.~P.
  Langtangen.
\newblock {Direct numerical simulation of transitional flow in a
  patient-specific intracranial aneurysm.}
\newblock \emph{J Biomech}, 44\penalty0 (16):\penalty0 2826--32, 2011.

\end{thebibliography}

\end{document}